\numberwithin{equation}{section}
\newcommand{\be}{\begin{eqnarray}}
\newcommand{\ee}{\end{eqnarray}}
\newcommand{\ce}{\begin{eqnarray*}}
\newcommand{\de}{\end{eqnarray*}}
\newtheorem{theorem}{Theorem}[section]
\newtheorem{lemma}[theorem]{Lemma}
\newtheorem{proposition}[theorem]{Proposition}
\newtheorem{corollary}[theorem]{Corollary}
\theoremstyle{remark}
\newtheorem{assumption}[theorem]{Assumption}
\newtheorem{example}[theorem]{Example}
\newtheorem{remark}[theorem]{Remark}
\newtheorem{definition}[theorem]{Definition}
\crefname{eqn}{Equation}{Equations}
\crefname{assumption}{Assumption}{Assumptions}
\crefname{innercustomthm}{Condition}{Conditions}
\def\bbk{{\boldsymbol{k}}}
\def\bbp{{\boldsymbol{p}}}
\def\bbr{{\boldsymbol{r}}}
\def\bbq{{\boldsymbol{q}}}
\def\bba{{\boldsymbol{a}}}
\def\bb2{{\boldsymbol{2}}}
\def\no{\nonumber}
\def\={&\!\!=\!\!&}
\def\e{{\mathrm{e}}}
\def\eps{\varepsilon}
\def\p{\partial}
\def\<{{\langle}}
\def\>{{\rangle}}
\def\({{\Big(}}
\def\){{\Big)}}
\def\bx{{\mathbf{x}}}
\def\dif{d}
\def\min{{\mathord{{\rm min}}}}
\def\no{\nonumber}
\def\={&\!\!=\!\!&}
\def\bt{\begin{theorem}}
\def\et{\end{theorem}}
\def\bl{\begin{lemma}}
\def\el{\end{lemma}}
\def\br{\begin{remark}}
\def\er{\end{remark}}
\def\bd{\begin{definition}}
\def\ed{\end{definition}}
\def\bp{\begin{proposition}}
\def\ep{\end{proposition}}
\def\bc{\begin{corollary}}
\def\ec{\end{corollary}}
\def\bx{\begin{example}}
\def\ex{\end{example}}
\def\cB{{\mathcal B}}
\def\cD{{\mathcal D}}
\def\cM{{\mathcal M}}
\newcommand\cmm{\cM}
\def\cR{{\mathcal R}}
\def\cS{{\mathcal S}}
\def\mB{{\mathbb B}}
\def\mE{{\mathbb E}}
\def\E{\mE}
\def\mI{{\mathbb I}}
\def\mL{{\mathbb L}}
\def\mN{{\mathbb N}}
\def\mP{{\mathbb P}}
\def\mQ{{\mathbb Q}}
\def\mR{{\mathbb R}}
\def\sF{{\mathscr F}}
\def\sI{{\mathscr I}}
\def\sM{{\mathscr M}}
\def\sV{{\mathscr V}}
\def\bC{{\mathbb C}}
\def\bB{{\mathbb B}}
\def\geq{\geqslant}
\def\leq{\leqslant}
\def\DD{{b}}
\def\wt{\widetilde}
\def\bu{{\mathbf{u}}}
\newcommand{\R}{{\mathbb R}}
\newcommand{\Rd}{{\R^d}}
\newcommand{\tand}{\quad\text{and}\quad}
\newcommand{\LL}{{\mathbb{L}}}
\newcommand{\1}{{\mathbf 1}}
\newcommand{\norm}[1]{{\left\vert\kern-0.25ex\left\vert\kern-0.25ex\left\vert #1
    \right\vert\kern-0.25ex\right\vert\kern-0.25ex\right\vert}}
\renewcommand{\le}{\leq}
\renewcommand{\ge}{\geq}
\begin{document}
	\title{Quantitative approximation of stochastic kinetic equations: from discrete to continuum}
	\date{\today}
	
\author{Zimo Hao, Khoa L{\^e} 
\and Chengcheng Ling 
}

\address{Universit\"at Bielefeld, Fakult\"at f\"ur Mathematik, 33615 Bielefeld, Germany}
\email{zhao@math.uni-bielefeld.de}
\address{
University of Leeds, School of Maths,
Leeds, LS2 9JT, UK.}
\email{ K.Le@leeds.ac.uk}
\address{ University of Augsburg, Institut f\"ur Mathematik,
86159 Augsburg,  Germany.}
\email{ chengcheng.ling@uni-a.de}

	\begin{abstract}
We study the convergence of a generic tamed Euler-Maruyama (EM) scheme for the kinetic type stochastic differential equations (SDEs) (also known as second order SDEs) with singular coefficients in both weak and strong probabilistic senses. We show that when the drift exhibits a relatively low regularity compared to the state of the art, the singular system is well-defined both in the weak and strong  probabilistic senses. Meanwhile, the corresponding tamed EM scheme is shown to converge at the rate of $1/2$ in both the weak and the strong senses.
		
		\bigskip
		
		\noindent {{\sc Mathematics Subject Classification (2020):}
		Primary 60H35, 
  65C30, 
  60H10; 
		Secondary
		60H50, 
		60L90, 
        35K65, 
        35R05, 
		35B65. 
		}

		\noindent{{\sc Keywords:} Singular SDEs; degenerate noise; weak approximation; strong approximation; kinetic SDEs; Second order SDEs; (tamed-)Euler-Maruyama scheme; regularization by noise; stochastic sewing; Zvonkin's transformation}
	\end{abstract}
	
	\maketitle
\section{Introduction}

We consider the following SDE:
\begin{align}\label{eq:SDE}
\begin{cases}
\dif X_t=V_t\dif t\\
\dif V_t=b(t,Z_t)\dif t+\dif W_t,\quad Z_0=\zeta=(\xi,\eta)\in\sF_0
\end{cases}
\end{align}
where $Z_t=(X_t,V_t)\in\Rd\times\Rd$, $t\geq0$, $b :\mR_+\times\mR^{2d}\to\mR^d$ is measurable, and $W$ is a standard $d$-dimensional Brownian motion on the probability space $(\Omega,\sF,(\sF_s)_{s\ge0},\mP)$. \eqref{eq:SDE} is one of the typical models that describes the Hamiltonian mechanics in the form of Langevin equation (\cite{S,T}). In such context,  $Z_t=(X_t,V_t)$ usually denotes the position and velocity of a moving particle at a time $t$. In statistical mechanics,  \eqref{eq:SDE}
  leads to the following diffusion (or ''Fokker–Planck'') equation  which describes the evolution of  the probability density $u(t,x,v)$ of $Z_t$  (\cite{RE, V}):
 \begin{align}\label{eq:PDE-intro}
    \partial_tu=\Delta_vu +v\cdot\nabla_x u+b\cdot\nabla_vu.
 \end{align}
In the literature, it has been rather well-understood  that the
kinetic structure has good “propagation” properties from the $v$-component to the $x$-component (e.g. \cite{B,CZ, IS}). Moreover, notice that even though in \eqref{eq:SDE} the noise is degenerate because it acts directly only on the momentum coordinates and
not positions, it can also be shortly wrote as for $z=(x,v)\in\mathbb{R}^{2d}$
\begin{align}
    \label{eq:SDE-H}
    d Z_t=F(t, Z_t) dt+\Sigma(t,Z_t) dW_t,  F(t,z):= \left(\begin{array}{c} v\\ b(t,z) \end{array}\right)\in \mathbb{R}^{2d}, \Sigma(t,z):=\left(\begin{smallmatrix}0&0\\0&\mathbb{I}_{d\times d}
    \end{smallmatrix}\right)\in \mathbb{R}^{d\times d},
\end{align}
which falls into the setting that known H\"ormander condition (\cite{H}) holds. In this spirit, the study on the  well-posedness of \eqref{eq:SDE} and \eqref{eq:PDE-intro} with singular $b$  in parallel gains quite development: on the regularity  results for \eqref{eq:PDE-intro}, \cite{B, RE} gain the optimal Sobolev regularity when $b\equiv1$; later on for non-constant particularly singular $b$, various of regularity  estimates also are shown within series of work \cite{IS, CZ, Zhang2018, HWZ20, RM} (also the references inside); meanwhile due to the strong connection between \eqref{eq:SDE} and \eqref{eq:PDE-intro}, the well-posedness for \eqref{eq:SDE} is established for various types of singular $b$ correspondingly within  \cite{R,RM, CMPZ, FFPV,WZ, HZZZ22}. Along this process, we can see that the system \eqref{eq:SDE} concerning its well-posedness is gradually understood well for singular $b$ comparing with classical first order SDEs (\cite{Zv, Ver, KR}).\\

Due to the needs from simulation and application side, also from the observation that stochastic numerical analysis  for the first order singular SDEs obtains delightful results (\cite{BDG, DGL, LL,  MGY20, JM21, GLL, Holland}),  it is demanding to implement numerical methods to the second order singular SDEs as well, which however exist fewer. Let us first mention some of results on the one of simplest numerical approximation-- the Euler-Maruyama scheme (EM scheme for short) for the singular first order SDEs:  on the strong convergence rate (i.e. $\mE\sup_{t\in[0,T]}\|Z_t-Z_t^n\|$) \cite{BDG, DGL, LL,  MGY20, NSz} show the known $n^{-\frac{1}{2}}$-optimal convergence rate, and \cite{JM21, Holland} show the weak convergence rate (i.e. $|\mE f(Z_T)-\mE f(Z_T^n)|$ for suitable test function $f$). According to our best knowledge, in the early stage \cite{MSH}  and \cite{T} study the time-discrete approximations (for instance, explicit and implicit Euler-Maruyama scheme) for \eqref{eq:SDE} under Lyapunov function type conditions which usually require certain regularity  on $b$. Later  \cite{LM}  obtains the non asymptotic bounds for the Monte Carlo algorithm associated to the Euler discretization for bounded $b$.  Recently \cite{LS18} that seems to be the first one obtains the quantitative strong convergence bound on the EM scheme with order $n^{-\frac{1}{4}}$ for $b$ being piecewise Lipschitz function. Evidently, comparing with  the study on EM scheme for singular first order SDEs, the quantitative analysis on singular second order SDEs is largely open.

In the current paper we are interested in giving the upper bound of the strong and weak EM scheme for \eqref{eq:SDE} under somehow the minimal
assumptions (see \cref{ass:main} and \cref{ass:main1}) that guarantee the SDE is correspondingly strongly and weakly well-posed.
Therefore we propose the following \emph{tamed Euler scheme} to approximate \eqref{eq:SDE}.
\subsection*{Proposed numerical scheme}
 For any $n\in\mN$, define $k_n(t):=\frac{\lfloor nt \rfloor}{n}, t\in \mR_+$.
Let $\{Z^n_t\}_{t\ge0}:=\{(X^n_t,V^n_t)\}_{t\ge0}$ be the solution to the following Euler scheme
\begin{align}\label{eq:SDE-EM}
\begin{cases}
X^n_t=\xi+\int_0^t V^n_s\dif s,\\
V^n_t=\eta+\int_0^t \Gamma_{s-k_n(s)}b_n(s,Z^n_{k_n(s)})\dif s+W_t,
\end{cases}
\end{align}
where for some parameter $\vartheta>0$ and a probability density function $\phi$, let
\begin{align}\label{def:bn}
    b_n:=b*\phi_n,\quad \phi_n(x,v):=n^{4d\vartheta}\phi(n^{3\vartheta}x, n^{\vartheta}v),
\end{align}
and for $t\in\mR$, define
\begin{align*}
\Gamma_tf(z):=f(\Gamma_t z):=f(x+tv,v),\quad z=(x,v).
\end{align*}
As we may observe, comparing with standard Euler-Maruyama scheme for the first order SDEs, we replace the classical Euler scheme $\int^t_0 b(s,Z^n_{k_n(s)})\dif s$ with the term $\int^t_0 \Gamma_{s-k_n(s)}b_n(s,Z^n_{k_n(s)})\dif s$.  There are mainly the following reasons that  $\int^t_0 \Gamma_{s-k_n(s)}b_n(s,Z^n_{k_n(s)})\dif s$ should be the considered one:

    Firstly,   we use tamed term $b_n$ instead of $b$ itself to guarantee that the solution sequence $Z_t^n$ to the discretized equation \eqref{eq:SDE-EM} is well-defined, which is similar to  the idea from \cite{HJK, LL} (known as ``taming technique"). Moreover,  the scheme with $\int^t_0 \Gamma_{s-k_n(s)}b_n(s,Z^n_{k_n(s)})\dif s$ provides a faster convergence rate which could be better for applications. We can illustrate this point easily: for any smooth function $f = f(x, v) = f(x)$ mapping $\mathbb{R}^{2d}$ to $\mathbb{R}$, 
    we analyze the following error: denote
\begin{align*}
I_n &:= \mathbb{E}\Big|f\Big(\int_0^t W_s\dif s, W_t\Big) - f\Big(\int_0^{k_n(t)} W_s\dif s, W_{k_n(s)}\Big)\Big|\\&= \mathbb{E}\Big|f\Big(\int_0^t W_s\dif s\Big) - f\Big(\int_0^{k_n(t)} W_s\dif s\Big)\Big|,
\end{align*}
clearly
\begin{align*}
I_n \lesssim \int_{k_n(t)}^t \mathbb{E}|W_s|\dif s \lesssim \int_{k_n(t)}^t s^{\frac{1}{2}}\dif s \lesssim n^{-1}.
\end{align*}
In contrast, if we replace $I_n$ with $J_n$ defined as
\begin{align*}
J_n &:= \mathbb{E}\Big|f\Big(\int_0^t W_s\dif s, W_t\Big) - \Gamma_{t-k_n(t)}f\Big(\int_0^{k_n(t)} W_s\dif s, W_{k_n(s)}\Big)\Big|\\
&=\mathbb{E}\Big|f\Big(\int_0^t W_s\dif s\Big) - f\Big(\int_0^{k_n(t)} W_s\dif s+t-k_n(t)W_{k_n(s)}\Big)\Big|,
\end{align*}
it yields
\begin{align*}
J_n \lesssim \mathbb{E}\Big|\int_{k_n(t)}^t (W_s - W_{k_n(s)})\dif s\Big| \lesssim \int_{k_n(t)}^t (s - k_n(s))^{\frac{1}{2}}\dif s \lesssim n^{-\frac{3}{2}},
\end{align*}
which shows that the tamed term could provide a convergence rate superior to $n^{-1}$.

      Secondly,  from one technical view point,
        given that in this paper we employ the semi-group to enhance the regularity of some singular functions like drift term $b$, which may not even be continuous, for the kinetic SDEs, in general we can not avoid to consider the semi-group with the following form (see also \eqref{def:g} and \eqref{CC01})
    \begin{align*}
        P_tf(x,v):=\mE f\left(x+tv+\int_0^t W_s\dif s,v+W_t\right)=\mE\Big(f\big(G_t(\Gamma_t(x,v))\big)\Big), G_t:=(\int_0^t W_sds, W_t).
    \end{align*}
   Comparing with the standard heat semi-group which relates to the heat equation,  the above one is the ``analog semi-group'' corresponding to  the kinetic equation \eqref{eq:PDE-intro} (\cite{HZZZ22, HRZ, DF, LM}).  It implies that we need to consider regularizing effect from $P_t$ for the singular functional with the form
    \begin{align}\label{0425:00}
        P_tf - \Gamma_{t-s}P_sf,\quad 0 < s < t,
    \end{align}
    rather than the classical form $P_tf-P_sf$ used for the singular first order SDEs (e.g. \cite{BDG,DGL,LL}). For further details, we refer to \cite{ZZ21, HZZZ22}. In this paper, we derive an estimate for
\begin{align*}
P_tf - P_s\Gamma_{t-s}f,
\end{align*}
which exhibits the same level of regularity as \eqref{0425:00} (see  \cref{rmk314} below).


\subsection*{Obtained convergence rate}
In the end, we are able to show both the weak  and strong convergence of the EM scheme with order $n^{-\frac{1}{2}}$ (see details in \cref{thm:main-rough}) under \cref{ass:main} and \cref{ass:main1}. It turns out to be the first new  quantitative bounds on the EM scheme for singular kinetic SDEs; the rate also is faster than limited known results from \cite{LM} and \cite{LS18}. We also want to mention the conjecture from \cite{LS18} which says the rate $n^{-1/4}$ obtained in \cite{LS18} is not optimal and expects better convergence rate. Our result can also be evident for this point.
{
Even for the non-degenerate case, it should be noted that in \cite{JM21}, the weak convergence rate for taming $L^qL^p$ drift is $(1-\frac{d}{p}-\frac{2}{q})/2$, which is called the "gap to singularity". Our weak convergence rate $1/2$ is independent of $p$ and gives a strong evidence that this gap to singularity can be removed.}
Meanwhile the assumptions are weaker than  the known results for the weak well-posedness (\cite{RM, LM}) and strong well-posedness (\cite{R, Zhang10}).\\

Here we roughly comment on the key ideas inside the proof.
\subsection*{Idea of the proof} \begin{enumerate}
    \item For quantitative weak error estimates, i.e. for the bounds of $|\mE f(Z_T)-\mE f(Z_T^n)|$, where $f$ is just being bounded (again this is singular), by using It\^o's formula to $r\to P_{t-r}\varphi(Z_r)$ and $P_{t-r}\varphi(Z^n_r)$, it reads (see also details from \eqref{0419:00})
\begin{align*}
    |\<\varphi,\rho_t-\rho^n_t\>|\le& \Big|\mE\int_0^t (b-b_n)(r,Z_r)\cdot \nabla_vP_{t-r}\varphi(Z_r)\dif r\Big|\no\\
    &+\Big|\mE\int_0^t b_n(r)\cdot \nabla_vP_{t-r}\varphi(Z_r)-b_n(r)\cdot \nabla_vP_{t-r}\varphi(Z_r^n)\dif r\Big|\no\\
    &+\Big|\mE\int_0^t \big(b_n(r,Z^n_r)-\Gamma_{r-k_n(r)}b_n(r,Z_{k_n(r)}^n)\big)\cdot \nabla_vP_{t-r}\varphi(Z_r^n)\dif r\Big|\no\\
    =:&I_1^n(t)+I_2^n(t)+I_3^n(t).
\end{align*}
 Since in this case $b$ and $\phi$ both have low regularity, known estimates can not be applied.  We need to fine estimate $I_1$, $I_2$ and $I_3$ separately.  Inside the proof, we get the desired bound of $I_2$ and $I_3$ via applying the regularization properties of kinetic semigroup $(P_t)_{t\geq0}$ collected in \cref{sec:Tools}, estimates on $I_1$ further heavily relies on the paraproduct analysis.
\item For the strong convergence rate, we first write
\begin{align*}
    Z_t-Z_t^n=\begin{pmatrix}\int_0^tV_s-V_s^n\dif s\\\int_0^t b(s,Z_s)-\Gamma_{s-k_n(s)}b_n(s,Z^n_{k_n(s)})\dif s
    \end{pmatrix}=:\begin{pmatrix}I_t^n\\S_t^n
    \end{pmatrix}.
\end{align*}
For the bound of $\mE\sup_{t\in[0,T]}|Z_t-Z_t^n|^m$, $m\geq1$,  the main task  is to estimate $\mE\sup_{t\in[0,T]}|S_t^n|^m$ since the linear $I_t^n$ term is  ready for the form of applying Gr\"onwall's inequality. Via John-Nirenberg inequality (\cite{Le2022}) together with Girsanov transformation and Zvonkin transformation,  it is then reduced to the estimate  on
\begin{align*}
\Big\|\sup_{t\in(0,1)}\big|\int_0^t\nabla_v U(r,M_r(z))\(\Gamma_{r-k_n(r)}b(r,M_{k_n(r)}(z))&-b(r,M_r(z))\)\dif r\big|\Big\|_{L^m(\Omega)}
\end{align*}
where  $M_t(z):=(x+tv+\int_0^tW_s\dif s,v+W_t)$, $\forall z=(x,v)\in\mR^{2d}$, $U$ is the solution to \eqref{eq:PDE-intro} with extra forcing field $b^n$ (so call \emph{Zvonkin transformation} method). The bound $n^{-\frac{1}{2}}$ is achieved by applying \emph{stochastic sewing lemma} (\cite{Le}) together with regularity estimates involved with \eqref{0425:00}, which also yields the desired strong convergence rate $n^{-\frac{1}{2}}$ after applying the Gr\"onwall's inequality in the end.

\end{enumerate}

\subsection*{Organization of the paper}
We present the frequently used notations and main results in \cref{sec:Notation-result}. We collect the tools  in \cref{sec:Tools}.  \cref{sec:Strong-Weak-Stable} discusses the weak and strong well-posedness, stability  of \eqref{eq:SDE}, which is new and has its own interests.  We give the auxiliary quantitative estimates on singular functional in \cref{sec:quantitative-est} and the final proofs for the main results are presented in \cref{sec:weak-strong-con}.  \cref{app} collects several technical lemmas for calculations appeared in the main proofs.
\section{Notations and main results}\label{sec:Notation-result}
\subsection{Notations}
Here we collect all of the notations that we quite often use in the whole text. We start with introducing the anisotropic Besov spaces.
\subsection*{Anisotropic Besov spaces} \label{sec.AS}

For an $L^1$-integrable function $f$ in $\mR^{2d}$, let $\hat f$ be the Fourier transform of $f$ defined by
$$
\hat f(\xi):=(2\pi)^{-d}\int_{\mR^{2d}} \e^{-{\rm i}\xi\cdot z}f(z)\dif z, \quad\xi\in\mR^{2d},
$$
and $\check f$ the inverse Fourier   transform of $f$ defined by
$$
\check f(z):=(2\pi)^{-d}\int_{\mR^{2d}} \e^{{\rm i}\xi\cdot z}f(\xi)\dif\xi, \quad z\in\mR^{2d}.
$$
Let $\bba=(3,1)$. For $z=(x,v)$, $z'=(x',v')\in\mR^{2d}$, we introduce the anisotropic distance
$$
|z-z'|_\bba:=|x- x'|^{1/3}+|v-v'|.
$$
Note that $z\mapsto|z|_{\bba}$ is not smooth.
For $r>0$ and $z\in\mR^{2d}$, we also introduce the ball centered at $z$ and with radius $r$ with respect to the above distance
as follows:
$$
B^\bba_r(z):=\{z'\in\mR^{2d}:|z'-z|_\bba\leq r\},\ \ B^\bba_r:=B^\bba_r(0).
$$
Let $\chi^\bba_0$ be  a symmetric $C^{\infty}$-function  on $\mR^{2d}$ with
$$
\chi^\bba_0(\xi)=1\ \mathrm{for}\ \xi\in B^\bba_1\ \mathrm{and}\ \chi^\bba_0(\xi)=0\ \mathrm{for}\ \xi\notin B^\bba_2.
$$
We define
$$
\phi^\bba_j(\xi):=
\left\{
\begin{aligned}
&\chi^\bba_0(2^{-j\bba}\xi)-\chi^\bba_0(2^{-(j-1)\bba}\xi),\ \ &j\geq 1,\\
&\chi^\bba_0(\xi),\ \ &j=0,
\end{aligned}
\right.
$$
where for $s\in\mR$ and $\xi=(\xi_1,\xi_2)$,
$$
2^{s\bba }\xi=(2^{3s}\xi_1, 2^{s}\xi_2).
$$
Note that
\begin{align}\label{Cx8}
{\rm supp}(\phi^\bba_j)\subset\big\{\xi: 2^{j-1}\leq|\xi|_\bba\leq 2^{j+1}\big\},\ j\geq 1,\ {\rm supp}(\phi^\bba_0)\subset B^\bba_2,
\end{align}
and
\begin{align}\label{AA13}
\sum_{j\geq 0}\phi^\bba_j(\xi)=1,\ \ \forall\xi\in\mR^{2d}.
\end{align}

Let $\cS$ be the space of all Schwartz functions on $\mR^{2d}$ and $\cS'$ the dual space of $\cS$, called the tempered distribution space.
For given $j\geq 0$, the  dyadic anisotropic block operator  $\mathcal{R}^\bba_j$ is defined on $\cS'$ by
\begin{align}\label{Ph0}
\mathcal{R}^\bba_jf(z):=(\phi^\bba_j\hat{f})\check{\ }(z)=\check{\phi}^\bba_j*f(z),
\end{align}
where the convolution is understood in the distributional sense and by scaling,
\begin{align}\label{SX4}
\check{\phi}^\bba_j(z)=2^{(j-1)4d}\check{\phi}^\bba_1(2^{(j-1)\bba}z),\ \ j\geq 1.
\end{align}
For $j\in\mN$, by definition it is easy to see that
\begin{align}\label{KJ2}
\cR^\bba_j=\cR^\bba_j\widetilde\cR^\bba_j,\ \mbox{ where }\ \widetilde\cR^\bba_j:=\sum_{|i-j|\leq 2}\cR^\bba_i,
\end{align}
where we used the convention that $\cR^\bba_i:=0$ for $i<0$. Moreover,
by the symmetry of $\phi^\bba_j$,
$$
\<\cR^\bba_j f,g\>=\< f,\cR^\bba_jg\>,\ \ f\in\cS', \ g\in\cS.
$$
Similarly, we can define the isotropic block operator $\cR_jf=\check\phi_j*f$ in $\mR^d$, where
\begin{align}\label{Cx9}
{\rm supp}(\phi_j)\subset\big\{\xi: 2^{j-1}\leq|\xi|\leq 2^{j+1}\big\},\ j\geq 1,\ {\rm supp}(\phi_0)\subset B_2.
\end{align}
Now we introduce the following anisotropic Besov spaces (cf. \cite[Chapter 5]{Tri06}).
\begin{definition}\label{bs}
Let $s\in\mR$, $q\in[1,\infty]$ and $\bbp\in[1,\infty]^2$. The  anisotropic Besov space is defined by
$$
\mathbf{B}^{s,q}_{\bbp;\bba}:=\left\{f\in \cS': \|f\|_{\mathbf{B}^{s,q}_{\bbp;\bba}}
:= \left(\sum_{j\geq0}\big(2^{ js}\|\cR^\bba_{j}f\|_{\bbp}\big)^q\right)^{1/q}<\infty\right\},
$$
where $\|\cdot\|_\bbp$ is defined by
\begin{align}\label{LP1}
\|f\|_{\mL^\bbp}:=\|f\|_{\mL_z^\bbp}:=\|f\|_{\bbp}:=\left(\int_{\mR^d}\|f(\cdot,v)\|_{p_x}^{p_v}\dif v\right)^{1/p_v}.
\end{align}
In the following we also use the notation $\|f\|_{L_T^q(\mL_z^\bbp)}:=\left(\int_0^T\|f(t,\cdot)\|_{\mL^\bbp_z}^{q}\dif t\right)^{1/q}$ and $\|f\|_{\mL_T^\infty}:=\sup_{t\in[0,T]}\|f(t)\|_\infty$.
\end{definition}
Note that for any $f\in \bB^{s,q}_{\bbp;\bba}$, by \eqref{AA13} we have
\begin{align}\label{SE1}
f=\sum_{j\geq 0}\cR^\bba_j f\  \mbox{ in $\bB^{s,q}_{\bbp;\bba}$.}
\end{align}
The mixed Besov space defined as follows
 shall be used in the study of strong solutions of kinetic SDEs.
\begin{definition}\label{def:bs-mix}Let $(s_0,s_1)\in\mR^2$.  Define
$$
\bB^{s_0,s_1}_{\bbp;x,\bba}:=\left\{f\in \cS': \|f\|_{\mathbf{B}^{s_0,s_1}_{\bbp;x,\bba}}
:= \sup_{k,j\geq 0}2^{\frac{ks_0}{3}}2^{ js_1}\|\cR^x_k\cR^\bba_{j}f\|_\bbp<\infty\right\},
$$
where for a function $f:\mR^{2d}\to\mR$,
$$
\cR^x_kf(x,v):=\cR_k f(\cdot,v)(x).
$$
Moreover, we also define
$$
\bB^{s}_{\bbp;x}:=\bB^{s,\infty}_{\bbp;x}:=\left\{f\in \cS': \|f\|_{\mathbf{B}^{s}_{\bbp;x}}
:= \sup_{k\geq 0}2^{k s}\|\cR^x_kf\|_\bbp<\infty\right\}.
$$
Similarly, one defines the usual  isotropic Besov spaces  $\bB^{s,q}_{p}$
in $\mR^d$ in terms of isotropic block operators $\cR_j$. If there is no confusion, we shall write
\begin{align}
    \label{def:Bes-pq}\bB^s_{\bbp;\bba}:=\bB^{s,\infty}_{\bbp;\bba},\ \ \bB^s_{p}:=\bB^{s,\infty}_{p}.
\end{align}
\end{definition}
\begin{definition}\label{def:bs-Ho}
For a function $f:\mR^{2d}\to\mR$,  the first-order difference operator is defined by
$$
\delta^{(1)}_hf(z):=\delta_hf(z):=f(z+h)-f(z),\ \ z, h\in\mR^{2d}.
$$
For $M\in\mN$, the $M$-order difference operator  is defined recursively by
$$
\delta^{(M+1)}_hf(z)=\delta_h\circ\delta^{(M)}_hf(z).
$$
\end{definition}
The relation among the introduced Besov spaces and differential operator can be illustrated via the following  known results (cf. \cite{ZZ21} and \cite[Theorem 2.7]{HZZZ22}).
\bp
For $s>0$, $q\in[1,\infty]$ and $\bbp\in[1,\infty]^2$, an equivalent norm of $\bB^{s,q}_{\bbp;\bba}$ is given by
\begin{align}\label{CH1}
\|f\|_{\bB^{s,q}_{\bbp;\bba}}\asymp \left(\int_{\mR^{2d}}\left(\frac{\|\delta_h^{([s]+1)}f\|_{\bbp}}{|h|^s_\bba}\right)^q\frac{\dif h}{|h|^{4
d}_\bba}\right)^{1/q}+\|f\|_{\bbp}=:[f]_{\bB^{s,q}_{\bbp;\bba}}+\|f\|_{\bbp},
\end{align}
where $[s]$ is the integer part of $s$. In particular, $\bC^s_{\bba}:=\mathbf{B}^{s,\infty}_{\infty;\bba}$ is the anisotropic
H\"older-Zygmund space, and for $s\in(0,1)$, there is a constant $C=C(\alpha,d,s)>0$ such that
\begin{align*}
  \|f\|_{\bC^s_{\bba}} \asymp_C\|f\|_\infty+\sup_{z\not= z'}|f(z)-f(z')|/|z-z'|^{s}_\bba,
\end{align*}
Similarly,  the mixed and $x$-direction H\"older-Zygmund spaces are defined by
$$
\bC^{s_0,s_1}_{x,\bba}:=\bB^{s_0,s_1}_{\infty;  x,\bba},\ \ \bC^{s}_{x}:=\bB^{s}_{\infty;  x}.
$$
\ep

Let us also introduce the following total variation distance of distributions.
\begin{definition}
    \label{def:TV}
    Let $\mu_1$ and $\mu_2$  be probability distributions on $\mR^d$, then the total variation distance of $\mu_1$ and $\mu_2$ is defined as
    \begin{align*}
        \|\mu_1-\mu_2 \|_{\rm var}:=\sup_{\|f\|_\infty}|\mu_1(f)-\mu_2(f)|
    \end{align*}
\end{definition}
\subsection*{Some conventions}
On finite dimensional vector spaces we always use the Euclidean norm.

Without confusion we sometimes also use $<,=, \leq$ and  $>,\geq$ between vectors if their each component shares the same order, e.g. we say $\bbq:=(q_i)_{i\geq 1}\leq \bbp:=(p_i)_{i\geq 1}$ if $q_i\leq p_i$ for each $i$.

In proofs, the notation $a\lesssim b$ abbreviates the existence of $C>0$ such that $a\leq C b$, such that moreover $C$ depends only on the parameters claimed in the corresponding statement. If the constant depends on any further parameter $c$, we incorporate it in the notation by writing $a\lesssim_c b$.
\subsection{Main results}

We study the convergence scheme mainly based on the following two types of conditions.
\begin{assumption}[Weak] Let $\beta\in(0,1/3)$, $\bbp=(p_x,p_v)\in(2,\infty)^2$. We assume
    \label{ass:main}
    $b\in L^\infty_T(\bB^\beta_{\bbp;\bba})$ with $\bba\cdot d/\bbp<1$.
\end{assumption}
\begin{assumption}[Strong]
    \label{ass:main1}
  Let $\beta\in(0,1/3)$, $\bbp=(p_x,p_v)\in(2,\infty)^2$. We assume $b\in L^\infty_T(\bB^{\frac{2}{3},\beta}_{\bbp;x,\bba})$  with $\bba\cdot d/\bbp<1$.
\end{assumption}
In summary, our main results can be stated as follows. The detailed results are presented in \cref{thmW} (well-posedness result) and \cref{thm-weak} (weak convergence), \cref{thm-strong} (strong convergence) accordingly.
\begin{theorem}
    \label{thm:main-rough}
        Let $(Z_t)_{t\in[0,1]}$ and $(Z^n_t)_{t\in[0,1]}$  be the solutions corresponding to \eqref{eq:SDE} and \eqref{eq:SDE-EM}. Assume $0<\vartheta<(2\bba\cdot\frac{d}{\bbp})^{-1}$. 
    \begin{enumerate}[(i)]
        \item ({\bf Weak convergence})  If \cref{ass:main} holds and $\vartheta=\frac{1}{2}$,  there is a constant $C>0$ such that for $n\in\mN$
     \begin{align}
             \label{est:thm-EM-w}
          \int_0^1 \Big\|\mP\circ(Z^n_t)^{-1}-\mP\circ(Z_t)^{-1}\Big\|_{\rm var}^2\dif t&\lesssim_C  n^{-1};
         \end{align}
         \item  ({\bf Strong convergence}) If  \cref{ass:main1} holds, 
         then for $p:=\min(p_x,p_v)$, we have  for any $m\in[1,p)$
         \begin{align}
             \label{est:thm-EM-s}
             \big\|  \sup_{t\in[0,1]}|Z_t-Z_t^n|\big\|_{L^m(\Omega)}\lesssim_C n^{-\frac{1+\beta/3}{2}+\epsilon}+n^{-\vartheta(\beta+1-\bba\cdot\frac{d}{\bbp})+\epsilon}. 
         \end{align}
         where $\epsilon>0$ is some sufficiently small constant.
    \end{enumerate}
\end{theorem}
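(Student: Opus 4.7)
\emph{Weak part.} My plan is to test against an arbitrary bounded $\varphi$ and obtain a bound on $|\mE\varphi(Z_t)-\mE\varphi(Z_t^n)|$ uniform in $\|\varphi\|_\infty\le 1$; integrating in $t$ (with a Cauchy--Schwarz step that forces the square inside the integral) delivers \eqref{est:thm-EM-w}. Applying It\^o's formula to $r\mapsto P_{t-r}\varphi(Z_r)$ and $r\mapsto P_{t-r}\varphi(Z_r^n)$ produces the split $I_1^n+I_2^n+I_3^n$ written in the introduction, since $P_{t-r}\varphi$ solves the backward kinetic Kolmogorov equation and so kills the drift of the \emph{true} process. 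I would then control each term via the kinetic semigroup regularisation of \cref{sec:Tools}: $I_1^n$ (mollification error) is handled by a paraproduct decomposition of $(b-b_n)\cdot\nabla_vP_{t-r}\varphi$ exploiting $b\in L^\infty_T(\bB^\beta_{\bbp;\bba})$ and the anisotropic scaling of $\phi_n$, which at the tuned exponent $\vartheta=\tfrac12$ yields exactly the $n^{-1/2}$ rate; $I_2^n$ is controlled by weak stability of the kinetic flow developed in \cref{sec:Strong-Weak-Stable}; and $I_3^n$, the one-step discretisation error, is where the non-standard semigroup difference $P_tf-P_s\Gamma_{t-s}f$ of \cref{rmk314} is essential---it supplies the extra half-order in $(r-k_n(r))$ that distinguishes the tamed step $\Gamma_{r-k_n(r)}b_n$ from the naive $b_n$, justifying the $n^{-1}$ bound on the squared TV distance.

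\emph{Strong part.} I would split $Z_t-Z_t^n=(I_t^n,S_t^n)$ as in the introduction; since $I_t^n$ is a time integral of $V-V^n$, Gr\"onwall reduces the task to an $L^m$-estimate of $\sup_{t\le 1}|S_t^n|$. I apply Zvonkin's transformation with drift $b_n$: let $U=U^n$ solve the backward kinetic PDE with source $b_n$, so that It\^o's formula rewrites $S_t^n$ as a martingale plus lower-order terms plus the critical remainder
\[
R_t^n:=\int_0^t\nabla_vU(r,Z_r^n)\bigl(\Gamma_{r-k_n(r)}b_n(r,Z_{k_n(r)}^n)-b_n(r,Z_r^n)\bigr)\dif r.
\]
A Girsanov change of measure turns $Z^n$ into the explicit Gaussian kinetic path $M_r(z)=(x+rv+\int_0^rW_s\dif s,v+W_r)$, at the price of an $L^m$-integrable density (this is where the constraint $m<p$ enters through the $\bbp$-integrability of $b$). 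The John--Nirenberg inequality of \cite{Le2022} then upgrades an $L^2$-bound on $R_t^n$ to $L^m$, and the $L^2$-bound itself is produced by the stochastic sewing lemma \cite{Le} applied to the germ $A_{s,t}:=\int_s^t\nabla_vU(r,M_r)\bigl(\Gamma_{r-k_n(r)}b(r,M_{k_n(r)})-b(r,M_r)\bigr)\dif r$. The two summands in \eqref{est:thm-EM-s} reflect the two error sources: $n^{-(1+\beta/3)/2+\epsilon}$ is the sewing-level bound exploiting the mixed regularity supplied by \cref{ass:main1}, while $n^{-\vartheta(\beta+1-\bba\cdot d/\bbp)+\epsilon}$ is the mollification discrepancy $b-b_n$ measured in the anisotropic Besov norm under the rescaling dictated by $\vartheta$.

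\emph{Main obstacle.} The crux of both parts is the conditional-expectation estimate on the germ $A_{s,t}$: one must show that $\mE[A_{s,t}\mid\sF_s]$ decays faster in $(t-s)$ than the pointwise $L^2$-size of $A_{s,t}$ alone would suggest, by precisely the extra half-power that sewing converts into a global $n^{-1/2}$ rate. Classical first-order arguments use the difference $P_tf-P_sf$; here the kinetic geometry forces the use of $P_tf-P_s\Gamma_{t-s}f$ (equivalently \eqref{0425:00}), and verifying that this non-standard difference delivers the needed regularity under the minimal Besov assumptions on $b$---low enough to fall below the classical well-posedness threshold---is the key technical step on which both the weak and strong rates ultimately rest.
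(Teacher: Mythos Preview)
Your outline matches the paper's strategy for both parts, with one genuine gap in the weak argument and a few understatements in the strong one.

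\textbf{Weak part, the term $I_2^n$.} The weak stability of \cref{sec:Strong-Weak-Stable} (\cref{thmW}(iii)) compares two genuine kinetic SDEs driven by different drifts $b^1,b^2$; it does not apply to the pair $(Z,Z^n)$, because $Z^n$ is not a solution of any kinetic SDE. What the paper actually does is rewrite
\[
I_2^n(t)=\Big|\int_0^t\big\langle b_n(r)\cdot\nabla_vP_{t-r}\varphi,\ \rho_r-\rho_r^n\big\rangle\,\dif r\Big|
\lesssim\|\varphi\|_{\bbq}\int_0^t(t-r)^{-\frac12-\bba\cdot\frac{d}{2\bbq}}\|\rho_r-\rho_r^n\|_{\bbp'}\,\dif r,
\]
and then close the estimate on $\|\rho_t-\rho_t^n\|_{\bbp'}$ by a Volterra-type Gr\"onwall inequality after combining with the bounds on $I_1^n$ and $I_3^n$ (first for $\bbq=\bbp$, then bootstrapping to general $\bbq$). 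This self-referential closure is the missing ingredient; ``weak stability'' is the wrong pointer and would not give you a handle on $I_2^n$.

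\textbf{Strong part.} Your Zvonkin--Girsanov--sewing chain is the right one, but the Gr\"onwall step after Zvonkin is subtler than you indicate. The martingale $\int(\nabla_vU(Z_r)-\nabla_vU(Z_r^n))\,\dif W_r$ has quadratic variation dominated by $\int|Z_r-Z_r^n|^2\,\dif A_r^n$ with $A^n$ built from the Hardy--Littlewood maximal function of $\nabla\nabla_vU$; since $A^n$ is random and unbounded, a naive Gr\"onwall fails. The paper introduces a stopping-time partition $(\tau^{n,l})$ along which the increments of $A^n$ are small, applies a \emph{pathwise} BDG inequality on each piece, and sums with exponential weights (see \eqref{def:stop} and the argument following it) to obtain \eqref{est:Z-Zn-imp}. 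Two smaller corrections: the sewing lemma (\cref{lem:SSL-shift}) is applied directly in $L^m$ rather than in $L^2$ with a John--Nirenberg upgrade, and the constraint $m<p$ enters through the H\"older/Girsanov step in \cref{cor:final}; also the critical remainder carries the factor $(\nabla_vU+\mathbf{I})$, not $\nabla_vU$ alone, since one must simultaneously control the raw discretisation error $\int(b_n(Z_r^n)-\Gamma_{r-k_n(r)}b_n(Z_{k_n(r)}^n))\,\dif r$.
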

\subsection*{Discussion on the main results.}
Notice that \cref{thm:main-rough} not only contains the results on numerical approximations which are precisely given in \cref{thm-weak} and \cref{thm-strong}, but also new well-posedness results concerning the conditions on $b$ which are presented precisely in \cref{thmW}. In the following we list the known results summarily and compare these with ours.
\begin{enumerate}
    \item {\bf Well-posedness}
    \begin{enumerate}
        \item Weak well-posedness:  There is very limited work on the study of the weak well-posedness of  \eqref{eq:SDE}. Sharp conditions given from  \cite[Theorem 1]{RM}  are either $b\in L^\infty_T(C_{x,v}^\beta)$ with $ \beta\in(0,1)$ or $b\in L^\infty_T(\mL^\infty_z)$ or $L_T^q(\mL^p_z)$ with $\frac{2}{q}+\frac{4d}{p}<1$ and $p\geq 2,q>2$. \\Comparing with our result, instead of working in H\"older space, \cref{thmW} (i) provides a more general framework in anisotropic Besov spaces; moreover it also covers one of the conditions $b\in L^\infty_T(C_{x,v}^\beta)$ with $ \beta\in(0,1)$  and $L_T^q(\mL^p_z)$ from   \cite[Theorem]{RM}.
        \item Strong well-posedness: To our best knowledge,  the work \cite{R} seems to be the first one that studies the strong well-posedness of \eqref{eq:SDE} in the singular setting. \cite[Theorem 1.1]{R} shows that there exists a unique strong solution to \eqref{eq:SDE} when $b\in C_T(C_x^{\beta_x}C_v^{\beta_v})$ with  $\beta_x\in(\frac{2}{3},1)$ and $\beta_v\in(0,1)$. Later on \cite{WZ} reduces the H\"older condition  on $b$ from \cite[Theorem 1.1]{R} to H\"older-Dini’s continuity assumption.     \cite[Theorem 1.1]{Zhang2018} obtains the strong well-posedness  of  \eqref{eq:SDE} for the case with
critical differentiability indices $\beta_x=\frac{2}{3}$
and $\beta_v=0$, i.e. $\int_0^\infty \|(\mathbbm{I}_d-\Delta_x)^\frac{1}{3}b(s)\|_{\bbp}^p\dif s<\infty$ with $\bbp=(p,p)$ and $p>2(2d+1)$.
Notice that our assumption on the strong well-posedness \cref{thmW} (ii) is different from the setting of \cite{WZ} and \cite{Zhang2018} and weaker than \cite[Theorem 1.1]{R}.
    \end{enumerate}
    \item  {\bf Convergence }
    \begin{enumerate}
        \item  Weak convergence: Considering the fact that there is few result on weak well-posedness, it seems that our result \cref{thm-weak} is the first work for quantitative weak convergence rate $n^{-\frac{1}{2}}$ for such singular $b$. The work \cite{LM} also studies the law of the discrete EM equation but does not provide any quantitative bounds.
        \item Strong convergence: \cite{LS18} obtains the strong convergence rate $n^{-\frac{1}{4}}$ of EM scheme under the piecewise Lipschitz  condition on $b$. We can see that the rate  $n^{-\frac{1}{4}}$ is sub-optimal comparing with \cref{thm-strong}.
         \cite[Conclusion]{LS18} also conjectures the faster rate. Here \cref{thm-strong} gives better rate $n^{-\frac{1}{2}}$ by
 choosing proper parameter $\theta$ (i.e. fine tuned $(b_n)$). Notice that the piecewise Lipschitz  condition are not the same as the singularity we are considering.
     \end{enumerate}
     \item {\bf Future work}
     \begin{enumerate}
         \item  Recently there are many fruitful results on particle systems and mean filed equations appearing, see for instance \cite{DF, Ha23, HRZ}. Our one of the on-going works is to apply   \cref{thm:main-rough} to study the numerics of kinetic particle systems.
         \item
     We may see that the $\frac{2}{3}$-regularity of $b$ on $x$-variable assumed in \cref{ass:main1} does not contribute too much on the convergence scheme \eqref{est:thm-EM-s}, which may actually be possible to improve the rate. This point will be studied into more details in the future.
     \end{enumerate}
\end{enumerate}

\section{Tools}\label{sec:Tools}
\subsection{Stochastic sewing}
We use 
L\^e's stochastic sewing lemma (SSL) \cite{Le}
with a time shift from \cite[Lemma 2,9]{DGL}.\\
For $S<T$ and $i\in\mN$ we denote $[S,T]_{\leq}^i=\left\{(t_0,\ldots,t_{i-1})\in[S,T]^i:\,t_0\leq \cdots\leq t_{i-1}\right\}$
and $\widehat{[S,T]}_{\leq}^2=\left\{(s,t)\in[S,T]_{\leq}^2:\,|s-t|\leq S\right\}$.
Let
$(A_{s,t})_{(s,t)\in[S,T]_\leq^2}$ be a family of $\R^d$-valued random variables satisfying $A_{s,s}=0$, for all $s\in[S,T]$ and such that $A_{s,t}$ is $\mathcal{F}_t$-measurable for all $(s,t)\in[S,T]_{\leq}^2$. Let $p\in[2,\infty)$.   Let $(S,T)\in[0,1]_\leq^2$.

\begin{lemma}\label{lem:SSL-shift}
 Suppose that there exist constants $\Gamma_1,\Gamma_2\in[0,\infty)$, $\delta_1, \delta_2>0$,  $\beta_1>1/2$, and $\beta_2>1$  with $\beta_1-\delta_1>0$ and $\beta_2-\delta_2>0$ such that the following hold:
\begin{enumerate}[(1)]
    \item $\|A_{s,t}\|_{L^p(\Omega)}\leq \Gamma_1S^{-\delta_1}|t-s|^{\beta_1}$, $(s,t)\in\widehat{[S,T]}_{\leq}^2$,
    \item $\|\E_s\delta A_{s,u,t}\|_{L^p(\Omega)}\leq \Gamma_2S^{-\delta_2}|t-s|^{\beta_2}$, $(s,t)\in\widehat{[S,T]}_{\leq}^2, u\in[s,t]$.
\end{enumerate}
  Then there exists a unique (up to modification) $(\mathcal{F}_t)_{t\geq S}$-adapted process $\mathcal{A}:[S,T]\mapsto L^p(\Omega;\R^d)$  such that $\mathcal{A}_S=0$ and the following bounds hold for some constants $K_1,K_2, K>0$: $(s,t)\in\widehat{[S,T]}_{\leq}^2$,
  \begin{enumerate}[(i)]
      \item $\|\mathcal{A}_t-\mathcal{A}_s- A_{s,t}\|_{L^p(\Omega)}\leq K_1|t-s|^{\beta_1}+ K_2|t-s|^{\beta_2}$,
      \item $\|\E_s(\mathcal{A}_t-\mathcal{A}_s- A_{s,t})\|_{L^p(\Omega)}\leq K_2|t-s|^{\beta_2}$,
      \item  $\|\mathcal{A}_t-\mathcal{A}_s\|_{L^p(\Omega)}\leq Kp\Gamma_1S^{-\delta_1}|t-s|^{\beta_1}+KpS^{-\delta_2}\Gamma_2|t-s|^{\beta_2}$. Here $K$ depends only on $\beta_1,\beta_2,d$.
  \end{enumerate}
\end{lemma}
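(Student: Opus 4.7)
The plan is to reduce the statement to the standard stochastic sewing lemma of L\^e \cite{Le} by localisation: on any subinterval of length at most $S$, hypotheses (1) and (2) become the standard SSL hypotheses with constants $\Gamma_1 S^{-\delta_1}$ and $\Gamma_2 S^{-\delta_2}$, respectively. Concretely, I would partition $[S,T]$ into consecutive closed subintervals $I_0,\ldots,I_N$ of length at most $S$, with $I_k$ and $I_{k+1}$ sharing a single endpoint. On each $I_k$ the restrictions of $A$ satisfy the classical SSL bounds, yielding an $(\sF_t)_{t\in I_k}$-adapted process $\mathcal A^k:I_k\to L^p(\Omega;\mR^d)$ with $\mathcal A^k_{\inf I_k}=0$ and
\begin{align*}
\|\mathcal A^k_t-\mathcal A^k_s-A_{s,t}\|_{L^p(\Omega)} &\les \Gamma_1 S^{-\delta_1}|t-s|^{\beta_1}+\Gamma_2 S^{-\delta_2}|t-s|^{\beta_2},\\
\|\mE_s(\mathcal A^k_t-\mathcal A^k_s-A_{s,t})\|_{L^p(\Omega)} &\les \Gamma_2 S^{-\delta_2}|t-s|^{\beta_2},
\end{align*}
with the usual linear-in-$p$ constants from \cite{Le}.

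I would then glue the local processes by setting $\mathcal A_S:=0$ and, for $t\in I_k$, $\mathcal A_t:=\sum_{j<k}\big(\mathcal A^j_{\sup I_j}-\mathcal A^j_{\inf I_j}\big)+\mathcal A^k_t$, which is $(\sF_t)$-adapted and whose increments on any interval contained in a single $I_k$ coincide with those of $\mathcal A^k$. For $(s,t)\in\widehat{[S,T]}_{\leq}^2$ either $s,t$ lie in a common $I_k$ and the above estimates transfer verbatim, or $s\in I_k$ and $t\in I_{k+1}$; in the latter case $|t-s|\le S$ forces a unique common endpoint $u\in[s,t]$, and one decomposes
\[
\mathcal A_t-\mathcal A_s - A_{s,t} \;=\; \big(\mathcal A_t-\mathcal A_u-A_{u,t}\big) + \big(\mathcal A_u-\mathcal A_s-A_{s,u}\big) - \delta A_{s,u,t}.
\]
Bound (i) then follows by combining the local SSL estimates with $\|\delta A_{s,u,t}\|_{L^p}\les \Gamma_1 S^{-\delta_1}|t-s|^{\beta_1}$ (from (1) applied to each of $A_{s,t}$, $A_{s,u}$, $A_{u,t}$). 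For (ii), take $\mE_s$ of this identity: the first bracket is handled via the tower property $\mE_s=\mE_s\mE_u$ together with the local conditional estimate; the second is direct; and the third is precisely hypothesis (2). The bound (iii) is obtained by writing $\mathcal A_t-\mathcal A_s=(\mathcal A_t-\mathcal A_s-A_{s,t})+A_{s,t}$ and applying (i) together with (1).

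The main obstacle is the careful bookkeeping at the junction point $u$: one must verify that the tower-property estimate for $\mE_s\big(\mathcal A_t-\mathcal A_u-A_{u,t}\big)$ retains the full $|t-s|^{\beta_2}$ exponent, rather than only $|t-u|^{\beta_2}$, and that the constants inherited from the standard SSL on each $I_k$ preserve the linear-in-$p$ dependence displayed in (iii). This is exactly where the constraint $|t-s|\le S$ interlocks with the singular factors $S^{-\delta_i}$: the condition $\beta_i-\delta_i>0$ guarantees that all telescoped estimates remain summable across the at most two subintervals crossed. Uniqueness is immediate: any candidate $\widetilde{\mathcal A}$ satisfying (ii) with $\widetilde{\mathcal A}_S=0$ restricts on each $I_k$ to a process satisfying the standard SSL conditional bound, hence coincides with $\mathcal A^k$ by the uniqueness part of L\^e's SSL, and therefore equals $\mathcal A$ throughout.
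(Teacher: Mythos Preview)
The paper does not supply its own proof of this lemma: it is quoted verbatim from \cite[Lemma~2.9]{DGL} and used as a black box. Your localise-and-glue strategy is exactly how the result is established in \cite{DGL}, so your proposal is correct and matches the intended argument.

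Two minor clarifications. First, the ``main obstacle'' you flag is not one: since $u\in[s,t]$ you have $|t-u|^{\beta_2}\le|t-s|^{\beta_2}$, so the tower-property estimate $\|\mE_s(\mathcal A_t-\mathcal A_u-A_{u,t})\|_{L^p}\lesssim \Gamma_2 S^{-\delta_2}|t-u|^{\beta_2}$ already gives what you need. Second, the hypotheses $\beta_i-\delta_i>0$ play no role in your gluing (at most two subintervals are crossed, so there is nothing to sum); they are included in the statement because they are what makes the lemma useful downstream---see the dyadic summation in the proof of \cref{cor:est-descr}, where one passes from $[S,1]$ to $[0,1]$ and needs $\sum_k (2^{-k})^{\beta_1-\delta_1}<\infty$. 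For the lemma as stated on $[S,T]$ with $|t-s|\le S$, they are not required.
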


\subsection{Calculus on Anisotropic Besov spaces}

The following Bernstein inequality is standard and proven in \cite{ZZ21}. We omit the details.
\bl[Bernstein's inequality]\label{BI00}
For any $\bbk:=(k_1,k_2)\in\mN^2_0$, $\bbp\le\bbp'\in[1,\infty]^2$, there is a constant $C=C(\bbk,\bbp,\bbp',d)>0$ such that for all $j\geq 0$,
\begin{align}\label{Ber}
\|\nabla^{k_1}_x\nabla^{k_2}_v\cR^\bba_j f\|_{\bbp'}\lesssim_C 2^{j \bba\cdot(\bbk+\frac{d}{\bbp}-\frac{d}{\bbp'})}\|\cR^\bba_j  f\|_{\bbp},\qquad  \|\cR^\bba_jf   \|_{\bbp}\lesssim_C \|f\|_{\bbp}.
\end{align}
\el

The following lemma provides important properties about Besov spaces. 
\bl\label{lemB1}
\begin{enumerate}[(i)]
 \item For any $\bbp\in[1,\infty]^2$,  $s'>s$ and $q\in[1,\infty]$, it holds that
\begin{align}\label{AB2}
\bB^{0,1}_{\bbp;\bba}\hookrightarrow\mL^\bbp\hookrightarrow\bB^{0,\infty}_{\bbp;\bba},\ \ \bB^{s',\infty}_{\bbp;\bba}\hookrightarrow \bB^{s,1}_{\bbp;\bba}\hookrightarrow \bB^{s,q}_{\bbp;\bba}.
\end{align}
\item For any  $\beta,\beta_1,\beta_2\in\mR$, $q,q_1,q_2\in[1,\infty]$ and $\bbp,\bbp_1,\bbp_2\in[1,\infty]^2$ with
$$
\beta=\beta_1+\beta_2,\ \ 1+\tfrac{1}{\bbp}=\tfrac1{\bbp_1}+\tfrac1{\bbp_2},\ \ \tfrac{1}{q}=\tfrac1{q_1}+\tfrac1{q_2},
$$
it holds that
\begin{align}\label{Con}
\|f*g\|_{\bB^{\beta,q}_{\bbp;\bba}}\leq 5\|f\|_{\bB^{\beta_1,q_1}_{\bbp_1;\bba}}\|g\|_{\bB^{\beta_2,q_2}_{\bbp_2;\bba}}.
\end{align}
\item For any $s>0$, $q\in[1,\infty]$  and $\bbp, \bbp_1,\bbp_2\in [1,\infty]^2$ with $\tfrac1\bbp=\tfrac1{\bbp_1}+\tfrac1{\bbp_2}$,
there is a constant $C=C(\alpha, d,s,q,\bbp_1,\bbp_2)>0$ such that for any $(f,g)\in \bB^{s,q}_{\bbp_1;\bba}\times \bB^{s, q}_{\bbp_2;\bba}$,
\begin{align}\label{EB07}
\|fg\|_{\bB^{s,q}_{\bbp;\bba}}\lesssim_C
\|f\|_{\bB^{s,q}_{\bbp_1;\bba}}\|g\|_{\bbp_2}+\|f\|_{\bbp_1}\|g\|_{\bB^{s,q}_{\bbp_2;\bba}}.
\end{align}
\item Let $\bbp,\bbp_1,\bbp_2\in[1,\infty]^2$ and $s,s_0,s_1\in\mR$, $\theta\in[0,1]$, $q,q_1,q_2\in[1,\infty]$.
Suppose that
\begin{align}\label{DD01}
\tfrac1{q}=\tfrac{1-\theta}{q_1}+\tfrac{\theta}{q_2},\
\tfrac1{\bbp}\leq\tfrac{1-\theta}{\bbp_1}+\tfrac{\theta}{\bbp_2},\ \ s-\bba\cdot\tfrac{d}{\bbp}=(1-\theta)\big(s_1-\bba\cdot\tfrac{d}{\bbp_1}\big)+\theta\big(s_2-\bba\cdot\tfrac{d}{\bbp_2}\big).
\end{align}
Then there is a constant $C=C(\bbp,\bbp_1,\bbp_2,s,s_1,s_2,\theta)>0$ such that
\begin{align}\label{Sob}
\|f\|_{\bB^{s,q}_{\bbp;\bba}}\lesssim_C\|f\|^{1-\theta}_{\bB^{s_1,q_1}_{\bbp_1;\bba}}\|f\|^\theta_{\bB^{s_2,q_2}_{\bbp_2;\bba}}.
\end{align}
In particular, for $1\leq\bbp_1\leq\bbp\leq\infty$, $q\in[1,\infty]$ and $s=s_1+\bba\cdot\big(\tfrac{d}{\bbp}-\tfrac{d}{\bbp_1}\big)$,
\begin{align}\label{Sob1}
\|f\|_{\bB^{s,q}_{\bbp;\bba}}\lesssim_C\|f\|_{\bB^{s_1,q}_{\bbp_1;\bba}}.
\end{align}
\item For any $\eps>0$ and $\bbp\in[1,\infty]^2$, $(s_0,s_1)\in\mR^2$, we have the following embeddings:
\begin{align}\label{IA2}
\bB^{{s_0-\eps,s_1+\eps}}_{\bbp; x,\bba}\hookrightarrow\bB^{{s_0,s_1}}_{\bbp; x,\bba}
\hookrightarrow\bB^{{s_0+\eps,s_1-\eps}}_{\bbp; x,\bba},
\end{align}
and for $s_1>0$,
\begin{align}\label{IA3}
\bB^{{s_0,s_1}}_{\bbp; x,\bba}\hookrightarrow
\bB^{{s_0+s_1}}_{\bbp; x}.
\end{align}
\item For $\theta\in[0,1]$, $\gamma,\beta\in\mR$ and $\bbp\in[1,\infty]^2$, there is a constant $C=C(d,\theta,\bbp)>0$
such that
\begin{align}\label{IA1}
\|f\|_{\bB^{\theta \gamma, (1-\theta)\beta}_{\bbp; x,\bba}}\lesssim\|f\|^\theta_{\bB^{ \gamma/(1+\alpha)}_{\bbp;x}}
\|f\|^{1-\theta}_{\bB^\beta_{\bbp;\bba}}.
\end{align}
\end{enumerate}
\el
\br\rm
Inequality \eqref{Sob} is a version of the classical Gagliardo-Nirenberg inequality in Besov spaces.
Inequality \eqref{Sob1} is the classical Sobolev inequality.
\er

\subsection{Density estimates}
First, we 
define for $t>0$
\begin{align}
    \label{def:g}g_t(x,v):=(\frac{\pi t^4}{3})^{-d/2}\exp\left(-\frac{3|x|^2+|3x-2tv|^2}{2t^3}\right)
\end{align}
which is the density of $G_t:=(\int_0^tW_s\dif s,W_t)$. Also define
\begin{align}\label{CC01}
P_tf(z):=\Gamma_t g_t*\Gamma_tf(z)=\Gamma_t(g_t*f)(z),\quad z=(x,v)\in\mR^{2d}.
\end{align}
Notice that
\begin{align*}
  P_tf(z)=\mE f(x+tv+\int_0^tW_r\dif r,v+W_t)=\mE f(G_t(\Gamma_tz)).
\end{align*}
It is easy to see that
 for any $\varphi\in C^\infty_b(\mR^{2d})$,
\begin{align*}
\p_tP_t\varphi=(\frac{1}{2}\Delta_v+v\cdot\nabla_x)P_t\varphi.
\end{align*}
One also sees  the  scaling
\begin{align}\label{828:00}
g_t(x,v)=t^{-2d}g_1(t^{-\frac{3}{2}}x,t^{-\frac{1}{2}}v),\nonumber\quad \\ \Gamma_tg_t(x,v)=t^{-2d}g_1(t^{-\frac{3}{2}}x+t^{-\frac{1}{2}}v,t^{-\frac{1}{2}}v)=t^{-2d}\Gamma_1g_1(t^{-\frac{3}{2}}x,t^{-\frac{1}{2}}v),
\end{align}
which implies  the following estimates for $(g_t)_{t\in(0,1]}$.
\bl
\begin{enumerate}
    \item For any $\bbp\in[1,\infty]^2$, there is a constant
 $C=C(d,\alpha,\bbp)$ such that
\begin{align}
    \label{CC03}
    \|g_t\|_{\bbp}\lesssim t^{-\bba\cdot\frac{d}{2}(1-\frac{1}{\bbp})},\quad t\in(0,1].
\end{align}
\item For any  $\alpha,\beta\ge0$ and $m,n\in\mN_0$, for any $\bbp\in[1,\infty]^2$  there exist constants $C=C(d,\alpha,\beta,m,n,\bbp)$ such that for any $t\in(0,1]$,
\begin{align}\label{CC02+}
&\big\||x|^\alpha|v|^\beta|\nabla_x^m\nabla_v^ng_t(x,v)|\big\|_\bbp\le C t^{\frac{3(\alpha- m)}{2}+\frac{\beta- n}{2}-\bba\cdot\frac{d}{2}(1-\frac{1}{\bbp})}.
\end{align}
\item
For any  $\alpha,\beta\ge0$, $\bbp\in[1,\infty]^2$ and $m,n\in\mN_0$, there exist constants $C=C(d,\alpha,\beta,\bbp,m,n)$ such that for any $t\in(0,1]$,
\begin{align}\label{CC03+}
\||x|^\alpha|v|^\beta|\nabla_x^m\nabla_v^n\partial_t(\Gamma_tg_t)(x,v)|\big\|_\bbp\le C t^{\frac{3(\alpha- m)+(\beta-n)-2}{2}-\bba\cdot\frac{d}{2}(1-\frac{1}{\bbp})}.
\end{align}
\end{enumerate}
\el
\begin{proof}
    We only need to show \eqref{CC03+}, the others are similar. It follows from the scaling \eqref{828:00} that
    \begin{align*}
        \partial_t(\Gamma_tg_t)(x,v)=&(-2d)t^{-2d-1}\Gamma_1g_1(t^{-3/2}x,t^{-1/2}v)\\
        &+(-3/2)t^{-2d-5/2}x\cdot \nabla_x \Gamma_1g_1(t^{-3/2}x,t^{-1/2}v)\\
        &+(-1/2)t^{-2d-3/2}v\cdot \nabla_v \Gamma_1g_1(t^{-3/2}x,t^{-1/2}v).
    \end{align*}
   Defining
   \begin{align*}
       G(x,v):=(-2d)\Gamma_1g_1(x,v)+(-3/2)x\cdot \nabla_x \Gamma_1g_1(x,v)+(-1/2)v\cdot \nabla_v \Gamma_1g_1(x,v),
   \end{align*}
   one sees that
   \begin{align*}
       \partial_t(\Gamma_tg_t)(x,v)=t^{-2d-1}G(t^{-3/2}x,t^{-1/2}v)
   \end{align*}
   and then
   \begin{align*}
&|x|^\alpha|v|^\beta\nabla_x^m\nabla_v^n\partial_t(\Gamma_tg_t)(x,v)\\&=t^{-2d-1+[3(\alpha-m)+(\beta-n)]/2}|t^{-3/2}x|^\alpha|t^{-1/2}v|^\beta \nabla_x^m\nabla_v^nG(t^{-3/2}x,t^{-1/2}v).
   \end{align*}
   Therefore, we have
   \begin{align*}
&\||x|^\alpha|v|^\beta|\nabla_x^m\nabla_v^n\partial_t(\Gamma_tg_t)(x,v)|\big\|_\bbp\\
&=t^{-2d-1+[3(\alpha-m)+(\beta-n)]/2} t^{\bba\cdot \frac{d}{\bbp}}\||x|^\alpha|v|^\beta|\nabla_x^m\nabla_v^nG(x,v)|\big\|_\bbp\lesssim t^{\frac{3(\alpha- m)+(\beta-n)-2}{2}-\bba\cdot\frac{d}{2}(1-\frac{1}{\bbp})},
   \end{align*}
   provided that $\Gamma_1g_1$ is a Schwartz function. This completes the proof.
\end{proof}

We note that for any $t\in\mR$ and $\bbp\in[1,\infty]^2$
\begin{align*}
    \|\Gamma_t f\|_{\bbp}\le \| f\|_{\bbp},
\end{align*}
which shows that $(\Gamma_t)_{t\geq0}$ is invariant on $\LL^\bbp$. However, when we consider it on the other Besov spaces,  the invariant property does no hold in general. We illustrate this point in \cref{lem:Gamma-Besov} below.\\

Let's first recall the following useful results in \cite[Lemma 6.7]{HWZ20}.
\bl\label{HWZ67}
 For $t\geq 0$ and $j\in\mN_0$, define
\begin{align*}
\Theta^{t}_j:=\Big\{\ell\geq -1: 2^{\ell}\leq 2^{4} (2^j+t2^{3j}),\ 2^{j}\leq 2^{4} (2^\ell+t2^{3\ell})\Big\}.
\end{align*}
Then for any $\ell\notin\Theta^{t}_j$, it holds that
\begin{align}\label{EM3}
\cR^\bba_j\Gamma_{t}\cR^\bba_\ell=0.
\end{align}
\el

\bl\label{lem:Gamma-Besov}
Let $\beta>0$ and $T>0$. Then for any $\bbp\in[1,\infty]^2$ and $q\in[1,\infty]$, there is a constant $C=C(d,\bbp,T,q)>0$ such that for all $t\in[0,T]$ and $f\in\bB^{3\beta,q}_{\bbp;\bba}$
\begin{align}\label{0222:00}
    \|\Gamma_tf\|_{\bB^{\beta,q}_{\bbp;\bba}}\le C(\|f\|_{\bB^{\beta,q}_{\bbp;\bba}}+t^\beta\|f\|_{\bB^{3\beta,q}_{\bbp;\bba}}).
\end{align}
\el
\begin{proof}
    For any fixed $t\in[0,T]$, we let
   \begin{align*}
       \Theta^{t,1}_j:=\Big\{\ell\geq -1: 2^{2\ell}\le t^{-1},\ \ 2^{j}\leq 2^{8} 2^\ell\Big\}
   \end{align*}
   and
   \begin{align*}
       \Theta^{t,2}_j:=\Big\{\ell\geq -1: 2^{2\ell}\ge t^{-1},\ \ 2^{j}\leq 2^{8} t2^{3\ell}\Big\}.
   \end{align*}
   It is easy to see that $\Theta^{t}_j\subset \Theta^{t,1}_j\cup \Theta^{t,2}_j$. Then by \eqref{SE1}
and \cref{HWZ67}, we have
   \begin{align*}
       \|\cR_j^\bba \Gamma_t f\|_{\bbp}&
       \le \sum_{\ell\in \Theta^t_j}\|\cR_j^\bba\Gamma_t \cR_\ell^\bba f\|_{\bbp}\lesssim \sum_{\ell\in \Theta^t_j}\| \cR_\ell^\bba f\|_{\bbp}\lesssim \sum_{\ell\in \Theta^{t,1}_j}\| \cR_\ell^\bba f\|_{\bbp}+\sum_{\ell\in \Theta^{t,2}_j}\| \cR_\ell^\bba f\|_{\bbp}\\
       &\lesssim \sum_{\ell\in \Theta^{t,1}_j} 2^{-\beta \ell}\|f\|_{\bB^{\beta}_{\bbp;\bba}}+\sum_{\ell\in \Theta^{t,2}_j} 2^{-3\beta \ell}\|f\|_{\bB^{3\beta}_{\bbp;\bba}}.
   \end{align*}
   We note that
   \begin{align*}
       \Theta^{t,1}_j\subset \{\ell\ge-1~:~ \ell\ge j-8\},\quad \Theta^{t,2}_j\subset \{\ell\ge-1~:~ \ell\ge (\ln(t^{-1})+j-8)/3\},
   \end{align*}
   which implies that
   \begin{align*}
       \|\cR_j^\bba \Gamma_t f\|_{\bbp}&\lesssim 2^{-\beta j}\|f\|_{\bB^{\beta}_{\bbp;\bba}}+t^\beta2^{-\beta j}\|f\|_{\bB^{3\beta}_{\bbp;\bba}}.
   \end{align*}
   Based on the definition of Besov spaces and its property \eqref{AB2}, we complete the proof.
\end{proof}

\begin{lemma}
    \label{lem:est-semi-sob}
Let $\alpha\in(0,1)$.    For $f\in\mB_{\bbp,\bba}^{\alpha}$ with  $\bbp\in[1,\infty]^2$ 
we have
     \begin{align}\label{est-semi-sob}
     \| \p_r\Gamma_rg_r*(\Gamma_tf)\|_{\bbp}\lesssim {\left(\|f\|_{\mB_{\bbp,\bba}^{\alpha}}+t^\alpha\|f\|_{\mB_{\bbp,\bba}^{3\alpha}} \right)}r^{\frac{\alpha}{2}-1}.
 \end{align}
 \end{lemma}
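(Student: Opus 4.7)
The plan is to exploit the fact that $\Gamma_r g_r$ is a probability density for each $r>0$ (since $g_r$ is the density of $G_r$ and the shear $\Gamma_r$ is volume-preserving), so $\int_{\mR^{2d}}\partial_r(\Gamma_r g_r)(h)\dif h = 0$. This zero-mean property lets me rewrite the convolution as a first-order difference integral, which is where the regularity of $\Gamma_t f$ can be brought in.

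More precisely, using the cancellation, I would write
\begin{align*}
\partial_r(\Gamma_r g_r)*(\Gamma_t f)(z) = \int_{\mR^{2d}} \partial_r(\Gamma_r g_r)(h)\bigl[(\Gamma_t f)(z-h) - (\Gamma_t f)(z)\bigr]\dif h,
\end{align*}
then apply Minkowski's inequality in the $\LL^\bbp$-norm and use the first-order difference characterization of the anisotropic Besov norm from \eqref{CH1} (valid for $\alpha \in (0,1)$ since then $[\alpha]+1=1$):
\begin{align*}
\|\delta_h \Gamma_t f\|_{\bbp} \lesssim |h|_{\bba}^{\alpha}\,\|\Gamma_t f\|_{\bB^{\alpha}_{\bbp;\bba}}.
\end{align*}
This reduces the problem to estimating $\int |\partial_r(\Gamma_r g_r)(h)||h|_{\bba}^{\alpha}\dif h$ and $\|\Gamma_t f\|_{\bB^\alpha_{\bbp;\bba}}$ separately.

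For the first factor I would use $|h|_{\bba}^{\alpha} \lesssim |x|^{\alpha/3} + |v|^{\alpha}$ and apply the moment bound \eqref{CC03+} with $m=n=0$ and $\bbp=(1,1)$: both the choice $(\alpha/3, 0)$ and the choice $(0,\alpha)$ in the exponents of $|x|^\alpha |v|^\beta$ yield the same scaling $r^{(\alpha-2)/2} = r^{\alpha/2-1}$. For the second factor I invoke exactly \cref{lem:Gamma-Besov}, which gives
\begin{align*}
\|\Gamma_t f\|_{\bB^{\alpha}_{\bbp;\bba}} \lesssim \|f\|_{\bB^{\alpha}_{\bbp;\bba}} + t^{\alpha}\|f\|_{\bB^{3\alpha}_{\bbp;\bba}}.
\end{align*}
Multiplying the two estimates produces \eqref{est-semi-sob}.

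The only mildly delicate step is the anisotropic moment computation: one must split $|h|_{\bba}^\alpha$ according to the non-smooth anisotropic distance and verify that both halves, when integrated against $|\partial_r(\Gamma_r g_r)|$, produce precisely the singular time-weight $r^{\alpha/2-1}$; this is a direct consequence of \eqref{CC03+} but is the one bookkeeping step where the anisotropy matters. Everything else is a standard zero-mean convolution argument combined with the Gamma-invariance bound already established in \cref{lem:Gamma-Besov}.
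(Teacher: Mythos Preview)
Your proposal is correct and follows essentially the same approach as the paper: use the zero mean of $\partial_r(\Gamma_r g_r)$ to introduce a first-order difference, bound $\|\delta_h \Gamma_t f\|_{\bbp}$ via \eqref{CH1}, estimate the resulting anisotropic moment $\int |h|_{\bba}^{\alpha}|\partial_r(\Gamma_r g_r)(h)|\dif h$ through \eqref{CC03+}, and finish with \cref{lem:Gamma-Besov}. The paper's proof is exactly this argument, written more tersely.
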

 \begin{proof}
 Denote $F_t:=\Gamma_tf$.
  We note that $\int_{\mR^{2d}}\p_r\Gamma_rg_r(z')\dif z'=0 $ and
  \begin{align*}
      \p_r\Gamma_rg_r*(\Gamma_tf)(z)&=\int_{\mR^{2d}} \p_r\Gamma_rg_r(z')\Gamma_tf(z-z')\dif z'\\
      &=\int_{\mR^{2d}} \p_r\Gamma_rg_r(z')(\Gamma_tf(z-z')-\Gamma_tf(z))\dif z'
  \end{align*}
Let $\bbp\prime$ be the dual of $\bbp$. By equivalent form \eqref{CH1}
     it reads
     \begin{align*}
        \| \p_r\Gamma_rg_r*F_t\|_{\bbp}&\lesssim \|F_t\|_{\mB_{\bbp,\bba}^{\alpha}}\int_{\mR^{2d}}|z'|_a^\beta|\p_r\Gamma_rg_r(z')|\dif z'\lesssim \|F_t\|_{\mB_{\bbp,\bba}^{\alpha}}r^{\frac{\alpha}{2}-1}\\
   &\lesssim\left(\|f\|_{\mB_{\bbp,\bba}^{\alpha}}+t^\alpha\|f\|_{\mB_{\bbp,\bba}^{3\alpha}} \right)r^{\frac{\alpha}{2}-1},
   \end{align*}
   where the second inequality is based on \eqref{CC03+} and the third inequality is from \eqref{0222:00}. This completes the proof.
     \end{proof}

\begin{lemma}
    \label{lem:est-pro-sob}
   Let $r\in[0,1]$, $0<s\leq t\leq 1$, $\alpha\in (0,1)$ and $p\in[1,\infty)$. Denote $Z_t:=(\int_0^tW_s\dif s,W_t)$. For  $f\in\mB_{\bbp,\bba}^{\alpha}$ with $\bbp=(p_x,p_v)\in[1,\infty]^2$, $p=:p_x\wedge p_v$, there exists constant $C=C(\alpha,p,d, \bbp)$ so that
   \begin{align}
       &\label{est:E-f-Z}
       \| f(Z_t)\|_{L^p(\Omega)}\leq Ct^{-\bba\cdot\frac{d}{2\bbp}} \|f\|_{\bbp},
       \\
         &\| \Gamma_{r}f(Z_t)-{\Gamma_{t-s+r}}f(Z_s)\|_{L^p(\Omega)}\leq Cs^{-\bba\cdot\frac{d}{2\bbp}}|t-s|^{\frac{\alpha}{6}}(|t-s|^{\frac{\alpha}{3}}+r^{\frac{\alpha}{3}}) \|f\|_{\mB_{\bbp,\bba}^{\alpha}}.\label{est:E-dif-f-Z}
   \end{align}
\end{lemma}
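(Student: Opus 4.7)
\emph{Plan.} For \eqref{est:E-f-Z}, I use that $Z_t=G_t$ is Gaussian with explicit density $g_t$ from \eqref{def:g}, so
\begin{align*}
\E|f(Z_t)|^p=\int_{\mR^{2d}}|f(z)|^p g_t(z)\dif z.
\end{align*}
Applying H\"older's inequality in the mixed norm with exponents $\bbq:=\bbp/p$ and its conjugate $\bbq'$ yields $\int|f|^p g_t\le \|f\|_{\bbp}^p\,\|g_t\|_{\bbq'}$. Since $p=p_x\wedge p_v$, at least one component of $\bbq'$ equals $\infty$, and componentwise $1-1/\bbq'=1/\bbq=p/\bbp$, so \eqref{CC03} gives $\|g_t\|_{\bbq'}\lesssim t^{-p\bba\cdot d/(2\bbp)}$. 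Taking $p$-th roots yields \eqref{est:E-f-Z}.

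For \eqref{est:E-dif-f-Z}, the key is an algebraic decomposition exploiting the independent increments of $W$. Set $\tilde W_u:=W_{u+s}-W_s$, a Brownian motion independent of $\sF_s$. Using $W_t=W_s+\tilde W_{t-s}$ and
\begin{align*}
\int_0^t W_u\dif u+rW_t=\Big(\int_0^s W_u\dif u+(t{-}s{+}r)W_s\Big)+\Big(\int_0^{t-s}\tilde W_v\dif v+r\tilde W_{t-s}\Big),
\end{align*}
I set $Y:=\Gamma_{t-s+r}Z_s=(\int_0^s W_u\dif u+(t{-}s{+}r)W_s,\,W_s)$ and $H:=(\int_0^{t-s}\tilde W_v\dif v+r\tilde W_{t-s},\,\tilde W_{t-s})$. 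Then $Y$ is $\sF_s$-measurable, $H$ is independent of $\sF_s$, and
\begin{align*}
\Gamma_r f(Z_t)-\Gamma_{t-s+r}f(Z_s)=f(Y+H)-f(Y)=\delta_H f(Y).
\end{align*}

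Conditioning on $H$ and using $\delta_h f(Y)=(\Gamma_{t-s+r}\delta_h f)(Z_s)$ together with \eqref{est:E-f-Z} and the $\Gamma$-invariance of $\|\cdot\|_{\bbp}$ gives, for every fixed $h\in\mR^{2d}$,
\begin{align*}
\|\delta_h f(Y)\|_{L^p(\Omega)}\lesssim s^{-\bba\cdot d/(2\bbp)}\|\delta_h f\|_{\bbp}\lesssim s^{-\bba\cdot d/(2\bbp)}|h|_\bba^\alpha\|f\|_{\bB^{\alpha}_{\bbp;\bba}},
\end{align*}
where the last inequality uses the equivalent norm \eqref{CH1} for $\bB^{\alpha}_{\bbp;\bba}=\bB^{\alpha,\infty}_{\bbp;\bba}$ with $\alpha\in(0,1)$. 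By independence,
\begin{align*}
\|\delta_H f(Y)\|_{L^p(\Omega)}^p\lesssim s^{-p\bba\cdot d/(2\bbp)}\|f\|_{\bB^{\alpha}_{\bbp;\bba}}^p\,\E|H|_\bba^{p\alpha}.
\end{align*}

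Finally, Gaussian computations give $\|\tilde W_{t-s}\|_{L^q}\lesssim (t-s)^{1/2}$, and from $\mathrm{Var}(\int_0^\tau \tilde W_v\dif v+r\tilde W_\tau)\lesssim \tau^3+r^2\tau$ also $\|\int_0^{t-s}\tilde W_v\dif v+r\tilde W_{t-s}\|_{L^q}\lesssim (t-s)^{3/2}+r(t-s)^{1/2}$. Applying subadditivity of $x\mapsto x^\gamma$ for $\gamma\in(0,1)$ successively to $|\cdot|_\bba^\alpha$ and to $x\mapsto x^{1/p}$ gives
\begin{align*}
\big(\E|H|_\bba^{p\alpha}\big)^{1/p}\lesssim (t-s)^{\alpha/2}+r^{\alpha/3}(t-s)^{\alpha/6}=(t-s)^{\alpha/6}\big((t-s)^{\alpha/3}+r^{\alpha/3}\big),
\end{align*}
which combines with the previous display to produce \eqref{est:E-dif-f-Z}. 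The main obstacle is identifying the precise decomposition so that the $\sF_s$-measurable part matches $\Gamma_{t-s+r}Z_s$ exactly; once this is done, the rest is just conditioning, the first estimate, and Gaussian moment bounds.
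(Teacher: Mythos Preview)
Your proof is correct and follows essentially the same route as the paper: both exploit the independent-increment decomposition $Z_t=\Gamma_{t-s}Z_s+(\text{increment}\stackrel{d}{=}Z_{t-s})$, reduce to the Besov increment bound $\|\delta_h f\|_{\bbp}\lesssim|h|_\bba^\alpha\|f\|_{\bB^{\alpha}_{\bbp;\bba}}$, and compute the same Gaussian moment $\E|(x+rv,v)|_\bba^{\alpha p}$ with respect to $g_{t-s}$. The paper carries this out by writing the joint density and performing changes of variables, whereas you phrase it probabilistically via the identity $\Gamma_r Z_t=Y+H$ with $Y=\Gamma_{t-s+r}Z_s$ and conditioning on $H$; the computations and estimates are otherwise identical.
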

\begin{proof}
    According to the fact that $g_t$ from \eqref{def:g} is the  the density of  $Z_t$,  
    it writes by H\"older’s inequality for $\bbq:=(\frac{p_x}{p_x-p},\frac{p_v}{p_v-p})$
    \begin{align}\label{est:mom-p}
    \| f(Z_t)\|_{L^p(\Omega)}^p=\big(\int_{\mR^{2d}}|f(z)|^pg_t(z)\dif z\big) \lesssim    \|f\|_{\bbp}^p\|g_t\|_{\bbq}. 
    \end{align}
 By \eqref{CC03}   we know that
 \begin{align*}
    \|g_t\|_{\bbq}\lesssim   t^{-\bba\cdot\frac{d}{2}(1-\frac{1}{\bbq})}.
 \end{align*}
 Therefore we plug the above into \eqref{est:mom-p} then get \eqref{est:E-f-Z}.

    Moreover, we note that
    \begin{align}
        Z_t-\Gamma_{t-s}Z_s&=\left(\int_0^tW_r dr,W_t\right)-\left(\int_0^sW_r dr+(t-s)W_s,W_s\right)\nonumber\\
        &=\left(\int_s^t (W_r-W_s)d r,W_t-W_s\right)\overset{(d)}{=}\left(\int_0^{t-s} W_rd r,W_{t-s}\right)=Z_{t-s}\label{eq:in-in}
    \end{align}
    is independent of $\sF_s$, which implies that
    \begin{align*}
   &  \| \Gamma_{r}f(Z_t)-\Gamma_{t-s+r}f(Z_s)\|_{L^p(\Omega)}^p=\| \Gamma_{r}f(Z_t)-\Gamma_{r}f(\Gamma_{t-s}Z_s)\|_{L^p(\Omega)}^p\\
   = &\int_{\mR^{2d}}\int_{\mR^{2d}}   |\Gamma_{r}f(z+\Gamma_{t-s}z')-\Gamma_{r}f(\Gamma_{t-s}z')|^pg_{t-s}(z)g_{s}(z')\dif z'\dif z
     \\
     =&\int_{\mR^{2d}}\int_{\mR^{2d}} |f(z+z')-f(z')|^pg_{t-s}(\Gamma_{-r}z)g_{s}(\Gamma_{s-t}z')\dif z'\dif z.
     \end{align*}
It follows from \eqref{CH1} and \eqref{def:Bes-pq} that
\begin{align*}
    \|f(z+\cdot)-f(\cdot)\|_{\bbp}\lesssim |z|_\bba^\alpha \|f\|_{\bB^\alpha_{\bbp;\bba}}
\end{align*}
     which together with H\"older's inequality and \eqref{CC02+} implies that
     \begin{align*}
     &\| \Gamma_{r}f(Z_t)-\Gamma_{t-s+r}f(Z_s)\|_{L^p(\Omega)}^p\lesssim 
     \|f\|_{\bB^{\alpha}_{\bbp;\bba}}^p \|\Gamma_{s-t}g_{s}\|_{\bbq}\int_{\mR^{2d}}|z|_\bba^{\alpha p}g_{t-s}(\Gamma_{-r}z)\dif z
     \\
     &\lesssim s^{-\bba\cdot\frac{d}{2\bbp}}\|f\|_{\bB^{\alpha}_{\bbp;\bba}}^p\int_{\mR^{2d}}|(x+rv,v)|_\bba^{\alpha p} g_{t-s}(z)\dif z\\
     &\lesssim s^{-\bba\cdot\frac{d}{2\bbp}} \|f\|_{\bB^{\alpha}_{\bbp;\bba}}^p \int_{\mR^{2d}}(|(x,v)|_\bba^{\alpha p}+|rv|^{\frac{{\alpha p}}{3}})g_{t-s}(z)\dif z\\
     &\lesssim s^{-\bba\cdot\frac{d}{2\bbp}}\|f\|_{\bB^{\alpha}_{\bbp;\bba}}^p\left((t-s)^{\frac{\alpha p}{2}}+r^{\frac{\alpha p}{3}}(t-s)^{\frac{\alpha p}{6}}\right).
     \end{align*}
This completes the proof.
\end{proof}
As a corollary, we have the following results.
\begin{corollary}\label{lem002}
\begin{enumerate}[(i)]
    \item For $f\in \mL^{\bbp\prime}$
  with $\bbp\prime=(p_x\prime,p_v\prime),$  $p_x\prime,p_v\prime\in[1,\infty)$,
  for any $0\leq s\leq t\leq 1$, and $\bbp=(p_x,p_v)\in[p_x\prime,\infty)\times[p_v\prime,\infty)$,  for any  $\delta\in(0,1]$, for any $k\in\mN$, the following holds 
\begin{align}
    \label{est:semi-P-L}
   \|{\nabla^k_v}(P_tf-P_s\Gamma_{t-s}f)\|_{\bbp}\le C\|f\|_{{\bbp}\prime}|t-s|^\delta s^{d(\frac{3}{2}(\frac{1}{p_x}-\frac{1}{p_x\prime})+\frac{1}{2}(\frac{1}{p_v}-\frac{1}{p_v\prime})){-\frac{k}{2}}-\delta}.
 \end{align}
\item For  any $\beta\in(0,2]$ and  $\delta\in[\beta/2,1]$, for $f\in\mB_{\bbp,\bba}^{\beta}$ with $\bbp=(p_x,p_v)$ and $p_x,p_v\in[1,\infty)$ 
we have
\begin{align}
    \label{est:semi-P-So}
   \|P_tf-P_s\Gamma_{t-s}f\|_{\bbp} \le C(t-s)^{\delta}s^{-\frac{2\delta-\beta}{2}}\left(\|f\|_{\mB_{\bbp,\bba}^{\beta}}+(t-s)^\beta\|f\|_{\mB_{\bbp,\bba}^{3\beta}}\right).
\end{align}
Particularly for $f\in \mL^\infty$,
\begin{align}\label{est:infty}
\|P_tf-P_s\Gamma_{t-s}f\|_{\infty}\le C\Big([(t-s)s^{-1}]\wedge1\Big)\|f\|_{\infty}.
\end{align}
\end{enumerate}
\end{corollary}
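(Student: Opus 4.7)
The foundation is the identity
\[
P_tf - P_s\Gamma_{t-s}f \;=\; (\Gamma_t g_t - \Gamma_s g_s) * \Gamma_t f \;=\; \int_s^t \partial_r(\Gamma_r g_r) * \Gamma_t f\,dr,
\]
which follows from the definition $P_rf = \Gamma_r g_r * \Gamma_r f$ in \eqref{CC01} together with the flow property $\Gamma_s\Gamma_{t-s} = \Gamma_t$. Both estimates in the corollary reduce to bounding the convolution integrand on the right-hand side and then integrating over $r\in[s,t]$.

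For part (i), I would apply Young's convolution inequality with exponent $\bbq$ satisfying $1 + \tfrac{1}{\bbp} = \tfrac{1}{\bbq} + \tfrac{1}{\bbp'}$ together with the $\mL^{\bbp'}$-invariance of $\Gamma_t$ to obtain
\[
\|\nabla_v^k(P_tf - P_s\Gamma_{t-s}f)\|_\bbp \le \|f\|_{\bbp'}\int_s^t \|\nabla_v^k\partial_r(\Gamma_r g_r)\|_\bbq\,dr.
\]
The density estimate \eqref{CC03+} bounds the integrand by $r^{-\eta}$ with $\eta = (k+2)/2 + \bba\cdot d/2 \cdot (1/\bbp' - 1/\bbp)$. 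A two-case split delivers the target $s$-exponent: when $t\le 2s$, the trivial bound $\int_s^t r^{-\eta}dr \le (t-s)s^{-\eta}$ combined with $(t-s)^{1-\delta}\le s^{1-\delta}$ produces $(t-s)^\delta s^{1-\eta-\delta}$; when $t>2s$, explicit integration yields $Cs^{1-\eta}$ and $s^\delta\le (t-s)^\delta$ gives the same bound. The boundary case $\eta=1$ (which occurs when $k=0$ and $\bbp=\bbp'$) produces a logarithm, absorbed via $\log(t/s)\lesssim((t-s)/s)^\delta$ for any $\delta>0$.

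For part (ii), I would prove the two endpoint estimates $\delta=\beta/2$ and $\delta=1$ and interpolate. At $\delta=\beta/2$, the semigroup identity $P_t = P_sP_{t-s}$ rewrites the difference as $P_s\Gamma_{t-s}(g_{t-s}*f - f)$; since $P_s$ is an $\mL^\bbp$-contraction (as $\|\Gamma_sg_s\|_1=1$) and $\Gamma_{t-s}$ is an $\mL^\bbp$-isometry, matters reduce to controlling $\|g_{t-s}*f - f\|_\bbp$, which by the difference characterization \eqref{CH1} is dominated both by $(t-s)^{\beta/2}\|f\|_{\bB^\beta_{\bbp;\bba}}$ and by $(t-s)^{3\beta/2}\|f\|_{\bB^{3\beta}_{\bbp;\bba}}$; summing gives the $s^0$-factor endpoint. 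At $\delta=1$, I would apply \cref{lem:est-semi-sob} with $\alpha=\beta$ to $\|\partial_r(\Gamma_r g_r)*\Gamma_tf\|_\bbp$ and integrate the $r^{\beta/2-1}$ weight using $r^{\beta/2-1}\le s^{\beta/2-1}$, producing $(t-s)s^{\beta/2-1}$. The intermediate $\delta\in[\beta/2,1]$ case follows by the log-convex interpolation $A^\theta B^{1-\theta}$ with $\theta=2(1-\delta)/(2-\beta)$, a direct computation showing that the $(t-s)$- and $s$-exponents match $\delta$ and $(\beta-2\delta)/2$ respectively. The $L^\infty$ bound \eqref{est:infty} is finally the special case $\bbp=\bbp'=\infty$, $k=0$, $\delta=1$ of part (i), complemented by the trivial contraction estimate $\|P_tf - P_s\Gamma_{t-s}f\|_\infty\le 2\|f\|_\infty$ in the regime where $(t-s)s^{-1}\ge 1$.

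The main technical obstacle will be the $\delta=\beta/2$ endpoint for $\beta$ approaching or exceeding $1$: the first-order difference characterization \eqref{CH1} no longer suffices, and the proof of $\|g_\tau*f-f\|_\bbp\lesssim\tau^{\beta/2}\|f\|_{\bB^\beta_{\bbp;\bba}}$ must proceed via Taylor expansion against higher-order differences, exploiting the symmetry $\int g_\tau(z')z'\,dz'=0$ together with the $([\beta]+1)$-th order difference operator built into the definition of $\bB^{\beta,q}_{\bbp;\bba}$. A secondary bookkeeping point is that \cref{lem:est-semi-sob} is stated for $\alpha\in(0,1)$ and must be extended along the same lines to $\alpha\in[1,2]$ to cover the higher-regularity regime of part (ii).
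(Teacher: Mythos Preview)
Your proposal is correct. Part (i) and the treatment of \eqref{est:infty} match the paper's proof essentially line for line: the paper writes the same convolution identity, applies Young's inequality with the same exponent $\bbq$, invokes \eqref{CC03+}, and integrates; for \eqref{est:infty} it bounds $\|\Gamma_tg_t-\Gamma_sg_s\|_1\le\int_s^t\|\partial_r\Gamma_rg_r\|_1\,dr\lesssim(t-s)s^{-1}$ directly.

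Part (ii) is where you take a genuinely different route. The paper does \emph{not} split into endpoints and interpolate: it simply applies \cref{lem:est-semi-sob} to the integrand $\partial_r(\Gamma_rg_r)*\Gamma_tf$ for every $r\in[s,t]$, obtaining the factor $r^{\beta/2-1}$, and then uses the elementary bound
\[
\int_s^t r^{\beta/2-1}\,dr\;\lesssim\;(t-s)^\delta\,s^{-\frac{2\delta-\beta}{2}}\qquad\text{for every }\delta\in[\beta/2,1],
\]
which follows from exactly the two-case split you already carried out in part (i). So the semigroup identity $P_t=P_sP_{t-s}$, the contraction argument, and the log-convex interpolation you set up are all unnecessary: the full range of $\delta$ drops out of a single integration. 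Your approach is valid but longer; the paper's is more direct. The technical obstacle you flag for $\beta\ge1$ (extending \cref{lem:est-semi-sob} and using higher-order differences) applies equally to the paper's argument, and in the paper's applications $\beta<1/3$ so the issue never arises.
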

\begin{remark}\label{rmk314}
Based on \cite[Lemma 3.4]{HZZZ22}, we have  for any $\alpha\in[0,2]$ and $\beta\in(-\infty,\alpha]$ there is a constant $C=C(d,T,\alpha,\beta)>0$ such that for all $0< s<t\le T$
\begin{align*}
    \|P_t f-\Gamma_{t-s}P_sf\|_\infty\le C(t-s)^{\frac{\alpha}{2}}s^{-\frac{\alpha-\beta}{2}}\|f\|_{\bC^\beta_a}.
\end{align*}
Therefore, we have the follow observation
\begin{align*}
    \|P_t f-\Gamma_{t-s}P_sf\|_\infty+\|P_t f-P_s\Gamma_{t-s}f\|_\infty\lesssim (t-s)s^{-1}\|f\|_\infty,
\end{align*}
in spite of $\Gamma_{t-s}P_sf\ne P_s\Gamma_{t-s}f$.
\end{remark}
 \begin{proof}[Proof of \cref{lem002}]
By \eqref{CC01}, one sees that
 \begin{align*}
P_tf-P_s\Gamma_{t-s}f=(\Gamma_tg_t)*(\Gamma_tf)-(\Gamma_sg_s)*(\Gamma_tf)=\int_s^t\p_r\Gamma_rg_r\dif r\ast F_t,   \quad F_t:=\Gamma_tf,
 \end{align*}
moreover following from \eqref{CC03+}
 together with Young’s inequality for convolution with $\frac{1}{\bbp}+1=\frac{1}{\bbp\prime}+\frac{1}{{\bbq}}$ 
 we get that 
 \begin{align*}
  \|  {\nabla^k_v}( P_tf-P_s\Gamma_{t-s}f)\|_{\bbp}=& \| {\nabla^k_v}\int_s^t\p_r\Gamma_rg_r\ast F_t\dif r\|_{\bbp}
  \\\lesssim& \int_s^t \| {\nabla^k_v}\p_r\Gamma_rg_r\|_{\bbq}\dif r  \|F_t\|_{{\bbp}\prime}
        \\ \lesssim&\|f\|_{{\bbp}\prime}\int_s^t\big( r^{d(\frac{3}{2q_x}+\frac{1}{2q_v}-2){-\frac{k}{2}}-1}\big)\dif r
\\=&\|f\|_{{\bbp}\prime}\int_s^t\big( r^{d(\frac{3}{2}(\frac{1}{p_x}-\frac{1}{p_x\prime})+\frac{1}{2}(\frac{1}{q_x}-\frac{1}{q_x\prime}))-1{-\frac{k}{2}}}\big)\dif r
\\\lesssim&\|f\|_{{\bbp}\prime}|t-s|^\delta s^{d(\frac{3}{2}(\frac{1}{p_x}-\frac{1}{p_x\prime})+\frac{1}{2}(\frac{1}{q_x}-\frac{1}{q_x\prime})){-\frac{k}{2}}-\delta}
 \end{align*}
 for any $\delta\in(0,1]$
 Therefore \eqref{est:semi-P-L} holds. \\

 For (ii) by \eqref{est-semi-sob} we have the following
  \begin{align*}
  \|   P_tf-P_s\Gamma_{t-s}f\|_{\bbp}=& \| \int_s^t\p_r\Gamma_rg_rF_t\dif r\|_{\bbp}
  \\\lesssim  &\int_s^t{\left(\|f\|_{\mB_{\bbp,\bba}^{\beta}}+|t-s|^\beta\|f\|_{\mB_{\bbp,\bba}^{3\beta}} \right)}r^{\frac{\beta}{2}-1}\dif r.
  \\\lesssim  & {\left(\|f\|_{\mB_{\bbp,\bba}^{\beta}}+|t-s|^\beta\|f\|_{\mB_{\bbp,\bba}^{3\beta}} \right)}\int_s^t r^{\frac{\beta}{2}-1}\dif r
  \\\lesssim  &(t-s)^{\delta}s^{-\frac{2\delta-\beta}{2}}\left(\|f\|_{\mB_{\bbp,\bba}^{\beta}}+(t-s)^\beta\|f\|_{\mB_{\bbp,\bba}^{3\beta}}\right)
 \end{align*}
 where $\delta\in[\frac{\beta}{2},1)$.
In particular we have for $f\in \mL^\infty$
 \begin{align*}
\|P_tf-P_s\Gamma_{t-s}f\|_{\infty}=\|(\Gamma_tg_t)*(\Gamma_tf)-(\Gamma_sg_s)*(\Gamma_tf)\|_{\infty}\le \|\Gamma_tg_t-\Gamma_sg_s\|_1\|f\|_{\infty}.
\end{align*}
Moreover, in the view of \eqref{CC03+} and Fubini's theorem,
 \begin{align*}
 \|\Gamma_tg_t-\Gamma_sg_s\|_1&\le \int_s^t\|\p_r\Gamma_rg_r\|_1\dif r\\
 &\le 
 \int_s^t r^2r^{-3}+rr^{-(1/2+3/2)}+r^{-1}+r^{\frac{1-3}{2}}\dif r\\&\lesssim \int_s^t r^{-1}\dif r\lesssim (t-s)s^{-1}
\end{align*}
 and it shows the estimate for \eqref{est:infty}.
The proof completes.
 \end{proof}

 In the end of this session, we also include a result from \cite{HRZ} on the estimates of the kinetic semigroup  $(P_t)_{t\geq0}$ that heavily used in the proof of weak convergence part.
 \begin{lemma}\cite[Lemma 2.16, (2.35)]{HRZ}
     \label{lem:est-Pt-itself}
     Let $\bbp_1,\bbp_2,\bbp_3\in[1,\infty]^2$ with $\bbp_1,\bbp_2\leq \bbp_3$. Denote $\kappa_i:=\bba\cdot(\frac{d}{\bbp_i}-\frac{d}{\bbp_3}), i=1,2$. For any $\beta_1,\beta_2\in\mR$, there are constants $C_i=C_i(d,\beta_1,\beta_2,\bbp_i,\bbp_3),i=1,2$, such for all $t>0$
     \begin{align}\label{est:Pt-itself-1}
      \mathbf{1}_{\beta_2\neq\beta_1-\kappa_1}\|P_tf\|_{\bB_{\bbp_3;\bba}^{\beta_2,1}}+ \|P_tf\|_{\bB_{\bbp_1;\bba}^{\beta_2,1}}\leq C_1(1\wedge t)^{-\frac{(\beta_2-\beta_1+\kappa_1)}{2}}\|f\|_{\bB_{\bbp_3;\bba}^{\beta_1}},
     \end{align}
      particularly, for any $q\in[1,\infty]$
     \begin{align}
\label{est:Pt-itself-2}
        \|P_tf\|_{\bB_{\bbp_3;\bba}^{\beta_1+\beta_2,q}}\lesssim(1\wedge t)^{-\frac{\beta_2\vee0}{2}}\|f\|_{\bB_{\bbp_3;\bba}^{\beta_1,q}}.
     \end{align}
 \end{lemma}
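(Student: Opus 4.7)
The plan is to exploit the factorization $P_t f = \Gamma_t g_t \ast \Gamma_t f$ from \eqref{CC01} together with frequency localization through the anisotropic Littlewood--Paley blocks $\cR_j^{\bba}$. Since each $\cR_j^{\bba}$ is itself a convolution with a Schwartz kernel, it commutes with the outer convolution, so
\[
\cR_j^{\bba} P_t f \;=\; \Gamma_t g_t \ast \cR_j^{\bba} \Gamma_t f \;=\; \Gamma_t g_t \ast \sum_{\ell \geq 0} \cR_j^{\bba} \Gamma_t \cR_\ell^{\bba} f,
\]
and by \cref{HWZ67} only the indices $\ell \in \Theta_j^t$ contribute to the inner sum. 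The task thus reduces to controlling, for each admissible $(j,\ell)$ and $i \in \{1,3\}$, the quantity $\|\Gamma_t g_t \ast \cR_j^{\bba} \Gamma_t \cR_\ell^{\bba} f\|_{\bbp_i}$.

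To produce the factor $(1\wedge t)^{-\kappa_i/2}$ I would apply Young's convolution inequality with conjugate $\bbq_i$ defined by $1 + 1/\bbp_i = 1/\bbp_3 + 1/\bbq_i$; this gives $\|\Gamma_t g_t\|_{\bbq_i} \lesssim t^{-\kappa_i/2}$ via \eqref{CC03} and the $\mL^{\bbp}$-invariance of $\Gamma_t$. The remaining block factor $\|\cR_j^{\bba} \Gamma_t \cR_\ell^{\bba} f\|_{\bbp_3}$ is bounded by $\|\cR_\ell^{\bba} f\|_{\bbp_3} \lesssim 2^{-\ell \beta_1} \|f\|_{\bB^{\beta_1}_{\bbp_3;\bba}}$ by combining the $\mL^{\bbp_3}$-invariance of $\Gamma_t$ with Bernstein's inequality \eqref{Ber}. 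Collecting these two pieces yields
\[
\|\cR_j^{\bba} P_t f\|_{\bbp_i} \;\lesssim\; t^{-\kappa_i/2} \Big(\sum_{\ell \in \Theta_j^t} 2^{-\ell \beta_1}\Big) \|f\|_{\bB^{\beta_1}_{\bbp_3;\bba}}.
\]

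Next I would split $\Theta_j^t = \Theta_j^{t,1} \cup \Theta_j^{t,2}$ exactly as in the proof of \cref{lem:Gamma-Besov}: on the parabolic branch $\Theta_j^{t,1}$ one has $\ell \gtrsim j$ with $2^{2\ell} \leq t^{-1}$, and on the kinetic branch $\Theta_j^{t,2}$ one has $\ell \gtrsim (\log_2 t^{-1} + j)/3$ with $2^{2\ell} \geq t^{-1}$. Summing the geometric series on each branch, weighting by $2^{j\beta_2}$ and finally summing in $j$ with respect to the $\ell^1$-norm produces both $\|P_t f\|_{\bB^{\beta_2,1}_{\bbp_i;\bba}}$ and the declared singularity $(1\wedge t)^{-(\beta_2 - \beta_1 + \kappa_i)/2}$; for $i=3$ one uses $\kappa_3 = 0 \leq \kappa_1$ and absorbs the extra smoothing into the right-hand side of \eqref{est:Pt-itself-1}.

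The main obstacle is the critical Sobolev-embedding balance $\beta_2 = \beta_1 - \kappa_1$: there one of the geometric series above degenerates into a harmonic sum and produces a logarithmic divergence. This is precisely why the indicator $\mathbf{1}_{\beta_2 \neq \beta_1 - \kappa_1}$ suppresses the $\bbp_3$-summability on the left of \eqref{est:Pt-itself-1}, whereas the $\bbp_1$-norm survives unconditionally because it uses $\kappa_1$ directly in Young's step and carries no borderline cancellation. Finally, the corollary \eqref{est:Pt-itself-2} follows from \eqref{est:Pt-itself-1} by specialising $\bbp_1 = \bbp_3$ (so $\kappa_1 = 0$), shifting the regularity index to $\beta_1 + \beta_2$, splitting according to the sign of $\beta_2$ (the case $\beta_2 \leq 0$ giving the trivial exponent $0$), and invoking the embedding $\bB^{s,1}_{\bbp;\bba} \hookrightarrow \bB^{s,q}_{\bbp;\bba}$ from \eqref{AB2}.
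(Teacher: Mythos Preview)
The paper does not supply a proof of this lemma; it is quoted verbatim from \cite[Lemma~2.16]{HRZ}. So there is no in-paper argument to compare against, and I can only assess your sketch on its own.

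There is a genuine gap. Your Young step bounds the convolution kernel $\Gamma_t g_t$ only by its global $\mL^{\bbq_i}$-norm, which is independent of the frequency scale $j$ (or $\ell$). This discards the parabolic smoothing of $P_t$: the constraint $\ell\in\Theta_j^t$ records only how $\Gamma_t$ permutes dyadic shells---it is exactly the mechanism behind \cref{lem:Gamma-Besov}, which yields \emph{no gain} of regularity. Concretely, take $\bbp_1=\bbp_3$ (so $\kappa_1=0$) and $\beta_2>\beta_1>0$. For high frequencies $t\,2^{2j}\gg 1$ the lower end of $\Theta_j^t$ sits near $\ell\approx(j+\log_2 t^{-1})/3$, so your inner sum gives $\sum_{\ell\in\Theta_j^t}2^{-\ell\beta_1}\gtrsim t^{\beta_1/3}2^{-j\beta_1/3}$, and then the outer sum $\sum_j 2^{j\beta_2}\cdot t^{\beta_1/3}2^{-j\beta_1/3}$ diverges whenever $\beta_2\ge\beta_1/3$. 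What is missing is the high-frequency decay of the Gaussian kernel: writing $\cR_\ell^{\bba}(g_t*f)=(\widetilde\cR_\ell^{\bba} g_t)*\cR_\ell^{\bba} f$ via \eqref{KJ2}, one has $\|\widetilde\cR_\ell^{\bba} g_t\|_{\bbq}\lesssim (1+t\,2^{2\ell})^{-N}\,t^{-\bba\cdot\frac d2(1-\frac1\bbq)}$ for every $N\ge0$, and it is this extra factor $(1+t\,2^{2\ell})^{-N}$ that makes the high-$\ell$ tail summable and produces the correct power $t^{-(\beta_2-\beta_1)/2}$.

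A secondary point: under the hypothesis $\bbp_1\le\bbp_3$ with the right-hand side in $\bbp_3$ as literally printed, your conjugate exponent satisfies $1/\bbq_i=1+1/\bbp_i-1/\bbp_3\ge1$, so Young's inequality is not available in that direction. The applications elsewhere in the paper (e.g.\ \eqref{0426:05}) actually raise the integrability exponent, so this is almost certainly a transcription slip in the quoted statement; you should flag it rather than silently work around it.
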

\section{Strong and weak well-posedness and stability of SDEs}\label{sec:Strong-Weak-Stable}
In this section, we first use the method in \cite[Section 4]{HRZ} to study the weak/strong well-posedness of the following kinetic SDE:
\begin{align}\label{KSDE}
\left\{
\begin{aligned}
&X_t=X_0+\int_0^t V_s\dif s,\\
&V_t=V_0+\int^t_0\DD(s,Z_s)\dif s+W_t,
\end{aligned}
\right.
\end{align}
where $Z_t:=(X_t,V_t)$, $\DD: \mR_+\times\mR^{2d}\to\mR^d$ is a measurable vector field, and  $(W_t)_{t\geq 0}$ is a standard $d$-dimensional Brownian motion as in the introduction.

We have the following well-posedness result for SDE \eqref{KSDE}.
\bt\label{thmW}
Let $T\in[0,\infty)$, $\beta>0$, $\bbp\in(1,\infty)^2$  and $\bba\cdot \frac{d}{\bbp} <1$.
\begin{enumerate}[(i)]
\item {\bf (Weak well-posedness)} Suppose $\DD\in L^\infty_T(\bB^\beta_{\bbp;\bba})$. For any initial distribution $\mu_0$,
there is a unique weak solution $Z$ to kinetic SDE \eqref{KSDE} satisfying the following Krylov estimate:  there is a constant $C>0$ such that for all $f\in L^q_T(\mL^{\bbp}_z)$ with $2-\frac{2}{q}-\bba\cdot\frac{d}{\bbp} >0$,and $0\le s<t \le T$
\begin{align}\label{0320:03}
    \mE\left|\int_s^t f(r,Z_r)\dif r\right|\lesssim_C \|f\|_{L^q_T(\mL_z^{\bbp})}.
\end{align}
\item {\bf (Strong well-posedness)} Suppose $\DD\in L^\infty_T(\bB^{\frac23,\beta}_{\bbp;x,\bba})$. For any initial value $Z_0$,
there is a unique strong solution $Z$ to kinetic SDE \eqref{KSDE}
\item  {\bf (Weak stability)}
For any $\delta\in(-1+\bba\cdot \frac{d}{\bbp},0)$, there is a positive constant $C$ which depends on $d,\bbp,\delta,\beta,\|b^1\|_{L_T^\infty(\bB^{-\gamma}_{\bbp;\bba})}, \|b^2\|_{L_T^\infty(\bB^{-\gamma}_{\bbp;\bba})}$ such that for any two weak solutions $X^1$ and $X^2$ corresponding to $b^1$ and $ b^2$, for any $t\in[0,T]$
\begin{align*}
       \|\mP\circ(X^1_t)^{-1}-\mP\circ(X^2_t)^{-1}\|_{\rm var}\le \|\mP\circ(X^1_0)^{-1}-\mP\circ(X^2_0)^{-1}\|_{\rm var}+C\|b^1-b^2\|_{L_T^\infty(\bB^{-\gamma}_{\bbp;\bba})}.
   \end{align*}
\item {\bf (Strong stability)} Let $\wt\DD\in L^\infty_T(\bB^{\frac23,\beta}_{\bbp;x,\bba})$ be another vector field and $\wt Z_0$ another initial random variable
on the same probability space. Let $\wt Z$ be the strong solution of \eqref{KSDE} corresponding to $\wt b$ and $\wt Z_0$. Then for any $\delta\in(-1+\bba\cdot \frac{d}{\bbp} ,0)$, there is a constant $C=C(d,\beta,\delta,T,\|b\|_{L^\infty(\bB^{\beta}_{\bbp;\bba})})>0$
\begin{align}\label{SW3}
\mE\left(\sup_{t\in[0,T]}|Z_t-\wt Z_t|^2\right)\lesssim_C \mE|Z_0-\wt Z_0|^2
+\|b-\wt b\|^2_{L^\infty_T(\bB^{\delta}_{\bbp;\bba})}.
\end{align}
\end{enumerate}
\et
The weak well-posedness is obtained in \cite{RZ24}, but for the readers' convenience, we give the entire proof to this theorem.
First, we can give
\begin{proof}[Proof of the weak existence]\label{proof:existence}
Let $(\Omega,\sF,\mP,(\sF_s)_{s\ge0})$ be a probability space and $(W_t)_{t\ge0}$ be a standard $d$-dimensional Brownian motion thereon. Based on \eqref{est:E-f-Z}, we have
\begin{align*}
  \Big(  \mE_s\int_s^t |b(r,\int_0^r W_u\dif u,W_r)|^2\dif r\Big)^\frac{1}{2}\lesssim (t-s)^{\frac{1-\bba\cdot d/\bbp}2}\|b\|_{L^\infty_T(\mL^{\bbp})},
\end{align*}
which by standard argument (John-Nirenberg inequality for instance \cite{Le2022}) implies that for any $m\geq1$ 
\begin{align*}
    \mE_s\left|\int_s^t |b(r,\int_0^r W_u\dif u,W_r)|^2\dif r\right|^m\lesssim m![(t-s)^{1-\bba\cdot d/\bbp}\|b\|_{L^\infty_T(\mL^{\bbp})}]^{2m},
\end{align*}
and thus
\begin{align*}
    \mE\exp\(\int_0^T\big|b\big(r,\int_0^r W_u\dif u,W_r\big)\big|^2\dif r\)<\infty.
\end{align*}
By Girsanov's theorem, under the new probability
\begin{align*}
    \dif \mQ(\omega):=\exp\(-\int_0^Tb(r,\int_0^r W_u\dif u,W_r)\dif W_r-\frac12\int_0^T|b(r,\int_0^r W_u\dif u,W_r)|^2\dif r\)\dif\mP,
\end{align*}
the process $V_t$ defined by
\begin{align*}
    V_t=v+\int_0^t b(r,\int_0^r W_u\dif u,W_r)\dif r+W_t
\end{align*}
is a $\mQ$-Brownian motion. Then $M_t:=(x+tv+\int_0^t W_r\dif r, v+W_t)$ is a weak solution to \eqref{KSDE} on $(\Omega,\sF,\mQ)$ with initial data $Z_0=(x,v)$.

Moreover,  by Girsanov theorem, H\"older's inequality and \eqref{est:E-f-Z}, for any $f\in L^q_T(\mL^{\bbp})$  we get
\begin{align}\label{0320:01}
    \mE\left|\int_s^t f(r,Z_r)\dif r\right| =\mE^\mQ\left|\int_s^t f(r,M_r)\dif r\right|\lesssim (t-s)^{\frac{2-\alpha}{2}}\|f\|_{L^q_T(\mL^{\bbp})},
\end{align}
where $\alpha:=2/q+\bba\cdot d/\bbp<2$.
\end{proof}

To show the uniqueness in law and pathwise uniqueness, we need to study the following linear kinetic equation with transport drift:
\begin{align}\label{LKE}
\p_t u=\Delta_v u+v\cdot\nabla_x u-\lambda u+\DD\cdot\nabla_v u+f,\quad u_0=0,
\end{align}
where $\lambda\ge0$, $\DD: \mR_+\times\mR^{2d}\to\mR^d$ and $f: \mR_+\times\mR^{2d}\to\mR$ are two Borel measurable functions.

\bt\label{thm41}
Let $T>0$  $\beta,\gamma>0$, $\bbp\in(1,\infty)^2$ with $\bba\cdot d/\bbp<1$ and  $\delta\in(-1+\bba\cdot d/\bbp,\beta]$.
\begin{enumerate}[(i)]
\item For any $\DD\in L^\infty_T(\bB^{\beta}_{\bbp;\bba})$  and $f\in L^\infty_T(\bB^{\delta}_{\bbp;\bba})$,
there is a unique weak solution $u$ to equation \eqref{LKE} with the regularity that for any $\theta\in[0,2]$
and all $\lambda\geq 1$,
\begin{align}\label{FA00}
\|u\|_{L^{\infty}_{T}(\bB^{\theta+\delta}_{\bbp;\bba})}\lesssim_C \lambda^{\frac{\theta-2}{2}}\|f\|_{L^\infty_T(\bB^\delta_{\bbp;\bba})},
\end{align}
where $C>0$ only depends on $T,d,\beta,\delta,q,\theta$ and $\|\DD\|_{L^\infty_T(\bB^\beta_{\bbp;\bba})}$.
\item Let $\DD\in L^\infty_T(\bB^{\gamma,\beta}_{\bbp;x,\bba})$, $f\in L^\infty_T(\bB^{\gamma,\delta}_{\bbp;x,\bba})$ and $u$ be the solution of \eqref{LKE}.
For any $\theta\in[0,2]$,
there is a constant $C>0$ depending on $T,d,\delta,\gamma,\beta,\theta$ and $ \|\DD\|_{L^\infty_T(\bB^{\gamma,\beta}_{\bbp;x,\bba})}$ such that for all $\lambda\geq 1$,
\begin{align}\label{FA001}
\|u\|_{\mL^{\infty}_{T}(\bB^{\gamma,\theta+\delta}_{\bbp;x,\bba})}\lesssim_C \lambda^{\frac{\theta-2}{2}}\|f\|_{L^\infty_T(\bB^{\gamma,\delta}_{\bbp;x,\bba})}.
\end{align}
\end{enumerate}
\et
\begin{proof}
(i) By the standard continuity method (e.g. \cite{Zhang2018, ZZ21}), we only need to prove the a priori estimate \eqref{FA00}. By Duhamel's formula, we have
\begin{align*}
u_t&=\int_0^tP^\lambda_{t-s}(\DD_s\cdot\nabla_v u_s+f_s)\dif s,
\end{align*}
where
$$
P^\lambda_tf(z)=\e^{-\lambda t} P_tf(z),\ \ z\in\mR^{2d},\ t>0.
$$
By \cite[Lemma 3.3]{ZZ21}, 
for any $\theta\in[0,2)$, we have
\begin{align}
\|u_t\|_{\bB^{\theta+\delta}_{\bbp;\bba}}&\lesssim\int_0^t\e^{-\lambda(t-s)}(t-s)^{-\frac{\theta}2}
\Big(\|\DD_s\cdot\nabla_v u_s\|_{\bB^{\delta}_{\bbp;\bba}}+\|f_s\|_{\bB^{\delta}_{\bbp;\bba}}\Big)\dif s\no\\
&\lesssim\int_0^t\e^{-\lambda(t-s)}(t-s)^{-\frac{\theta}2}\Big(\|\DD_s\|_{\bB^{\beta}_{\bbp;\bba}}\|u_s\|_{\bC^{1+\delta\vee0}_{\bba}}+\|f_s\|_{\bB^{\delta}_{\bbp;\bba}}\Big)\dif s.\label{Aw1}
\end{align}
In particular, taking $\theta=1-\delta\wedge 0+\bba\cdot d/\bbp$, we have
\begin{align}\label{est:AW1}
\|u_t\|_{\bC^{1+\delta\vee0}_\bba}\lesssim\|u_t\|_{\bB^{1+\bba\cdot d/\bbp+\delta}_{\bbp;\bba}}\lesssim&\|\DD\|_{L^\infty_T(\bB^{\beta}_{\bbp;\bba})}\int_0^t\e^{-\lambda(t-s)}(t-s)^{-\frac{1-\delta\wedge 0+\bba\cdot d/\bbp}2}\|u_s\|_{\bC^{1+\delta\vee0}_\bba}\dif s\nonumber\\
&+\|f\|_{L^\infty_T(\bB^{\delta}_{\bbp;\bba})}\int_0^t\e^{-\lambda s}s^{-\frac{1-\delta\wedge 0+\bba\cdot d/\bbp}2}\dif s.
\end{align}
Since $\frac {1-\delta\wedge 0+\bba\cdot d/\bbp}2<1$, by Gr\"onwall's inequality of Volterra type (\cite{Zhang10}), we get
$$
\|u\|_{L^\infty_T(\bC^{1+\delta\vee0}_\bba)}\lesssim \|f\|_{L^\infty_T(\bB^{\delta}_{\bbp;\bba})}.
$$
Then by \eqref{Aw1} and \eqref{est:AW1} we have for $\lambda\geq 1$,
\begin{align*}
\|u\|_{L^{\infty}_T(\bB^{\theta+\delta}_{\bbp;\bba})}
&\lesssim\left(\|u\|_{L^\infty_T(\bC^{1+\delta\vee0}_\bba)}\|b\|_{L^\infty_T(\bB^\beta_{\bbp;\bba})}
+\|f\|_{L^\infty_T(\bB^\delta_{\bbp;\bba})}\right)
\left(\int_0^T\e^{-\lambda s}s^{-\frac{\theta}2}\dif s\right)\\
&\lesssim\|f\|_{L^\infty_T(\bB^\delta_{\bbp;\bba})} \lambda^{\frac{\theta-2}2}.
\end{align*}
Thus we get \eqref{FA00}.

(ii) Noting that for any $j\geq 0$ (see \cite[Lemma 3.3]{ZZ21}),
$$
\|\cR^x_jP_tf\|_{\bB^{\theta+\beta}_{\bbp;\bba}}\lesssim t^{-\frac{\theta}2}\|\cR^x_jf\|_{\bB^{\beta}_{\bbp;\bba}}\lesssim t^{-\frac{\theta}2}2^{-\gamma k}\|f\|_{\bB^{\gamma,\beta}_{\bbp;x,\bba}},
$$
we have for any $\gamma\in\mR$,
$$
\|P_tf\|_{\bB^{\gamma,\theta+\beta}_{\bbp;x,\bba}}\lesssim t^{-\frac{\theta}2}\|f\|_{\bB^{\gamma,\beta}_{\bbp;x,\bba}}.
$$
By this estimate, the estimate \eqref{FA001} is obtained completely the same as getting \eqref{FA00}.
\end{proof}

The following is another auxiliary lemma that is also crucial for showing the weak uniqueness and pathwise uniqueness.
 \bl\label{lem44}
 Let $T>0$
 and $\beta>0$. Suppose that $\DD,f\in L^\infty_T(\bB^{\beta}_{\bbp;\bba})$ with  $\bbp\in(1,\infty)^2$  and $\bba\cdot d/\bbp<1$.
Let $u$ be the unique weak solution of the following backward kinetic PDE:
\begin{align}\label{Aw2}
\p_t u+\Delta_v u+v\cdot\nabla_x u+\DD\cdot\nabla_v u=f,\quad u(T)=0,
\end{align}
and $Z$ be a weak solution of kinetic SDE \eqref{KSDE} satisfying \eqref{0320:03}. Then for any $t\in[0,T]$,
\begin{align}\label{SDE304}
u(t,Z_t)=u(0,Z_0)+\int^t_0 f(s, Z_s)\dif s
+\int^t_0\nabla_v u(s,Z_s)\dif W_s.
\end{align}
\el
\begin{proof} Firstly observe that \eqref{Aw2}  can be derived from \eqref{LKE} simply by changing of time variable in the backward way. Hence \cref{thm41} yields the a priori estimates \eqref{FA00} and \eqref{FA001} of the solution to \eqref{Aw2}, therefore further implies the existence and uniqueness of the weak  solution to \eqref{Aw2}.

Let $\rho\in \bC^\infty_c$ be a smooth density function with support in $B^\bba_1$.
For each $n\in\mN$, define
\begin{align}\label{Mo5}
\rho_{n}(z)=\rho_{n}(x,v):=n^{4d}\rho(nx,n^{3}v)
\end{align}
and
$$
u^n(t,z):=(u(t,\cdot)\ast\rho_{n})(z).
$$
Taking convolutions for both sides of \eqref{Aw2} with $\rho_n$, we get
\begin{align*}
\p_t u_n+(\Delta_v+v\cdot\nabla_x)u_n+\DD\cdot\nabla_v u_n=f_n+g_n+h_n,
\end{align*}
where
\begin{align*}
g_n=v\cdot\nabla_x u_n-(v\cdot\nabla_x u)\ast\rho_n,\ \ h_n:=\DD\cdot\nabla_v u_n-(\DD\cdot\nabla_v u)\ast\rho_n.
\end{align*}
By It\^o's formula, we have
\begin{align*}
u_n(t,Z_t)&=u_n(0,Z_0)+\int^t_0 (f_n+h_n+g_n)(s, Z_s)\dif s+\int^t_0\nabla_v u_n(s,Z_s)\dif W_s.
\end{align*}
Noting that by \eqref{FA00} with $\theta=2$,
\begin{align}
\|u_n-u\|_{L^\infty_T(\bC_\bba^{2-\bba\cdot d/\bbp})}&\leq\sup_{t\in[0,T]}\int_{\mR^{2d}}\|u(t,\cdot-y,\cdot-w)-u(t,\cdot)\|_{\bC_\bba^{2-\bba\cdot d/\bbp}}\rho_n(y,w)\dif y\dif w\no\\
&\leq\|u\|_{L^\infty_T(\bC^{2-\bba\cdot d/\bbp+\beta}_\bba)}\int_{\mR^{2d}}(|y|^{\frac1{3}}+|w|)^{\beta}\rho_n(y,w)\dif y\dif w
\no\\
&\lesssim n^{-\frac{\beta}{3}}\|u\|_{L^\infty_T(\bB^{2+\beta}_{\bbp;\bba})},\label{Aw3}
\end{align}
we have
$$
 a.s.\quad \left(u_n(t,Z_t),u_n(0,Z_0)\right)\to\left(u(t,Z_t),u(0,Z_0)\right)\quad \text{as} \quad n\to\infty.
$$
Letting
$$
A_n:=\int^t_0\left(\nabla_v u_n(s,Z_{s})-\nabla_v u(s,Z_{s})\right)\dif W_s,
$$
by the isometry formula of stochastic integrals, we have
\begin{align*}
\mE|A_n|^2\le \int^t_0\|\nabla_v u_n(s)-\nabla_vu(s)\|_\infty\dif s,
\end{align*}
which by \eqref{Aw3} implies that
\begin{align*}
\lim_{n\to\infty}\mE|A_n|^2=0.
\end{align*}
Finally, noting that
\begin{align*}
g_n(t,z)=\int_{\mR^{2d}}u(t,x-y,v-w) w\cdot\nabla_y \rho_n(y,w)\dif y\dif w,
\end{align*}
since $\rho_{n}(y,w)=n^{4d}\rho(ny,n^{3}w)$, we have
\begin{align*}
\|g_n(t)\|_{\infty}\lesssim n^{-\alpha}\|u(t)\|_{\infty}
\end{align*}
and
\begin{align*}
\|h_n(t)\|_{\bbp}&\lesssim n^{-\beta/3}\|\DD(t)\|_{\bB^{\beta}_{\bbp,\bba}}\|\nabla_v u(t)\|_{\infty}.
\end{align*}
Hence, by \eqref{0320:03}
\begin{align*}
a.s.\ \ \int^t_0(|f_n-f|+|h_n|+|g_n|)(s,Z_s)\dif s\to 0,\ \ \text{as}\ \ n\to\infty.
\end{align*}
Thus we complete the proof.
\end{proof}
\br\rm
Notice that the stochastic integral in \eqref{SDE304} is martingale by the regularity estimates of $u$ in \eqref{FA00}.
 \er

To show the stability, we give the following estimate.
\begin{lemma}
  Assume $b\in L^{\infty}_T(\bB^{\beta}_{\bbp;\bba})$ with $\beta>0$,  $\bbp\in(1,\infty)^2$  and $\bba\cdot d/\bbp<1$, and let $Z$ be a weak solution to \eqref{KSDE}. Then for any $\delta<0$ with $-\delta+\bba\cdot d/\bbp<1$, there is a constant $C>0$ such that for all $f\in L^\infty_T(\mB^\beta_{\bbp;\bba})$, for $m\geq1$
  \begin{align}\label{0321:02}
      \mE\left[\sup_{t\in[0,T]}\left|\int_0^t f(r,Z_r)\dif r\right|^m\right]\lesssim_C \|f\|_{L^\infty_T(\bB^{\delta}_{\bbp;\bba})}^m.
  \end{align}
\end{lemma}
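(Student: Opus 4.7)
The plan is to run a Zvonkin-type transformation via the backward kinetic PDE. First I would solve
\begin{align*}
\partial_t u + (\Delta_v + v \cdot \nabla_x) u + b \cdot \nabla_v u = f, \qquad u(T, \cdot) = 0,
\end{align*}
for which \cref{thm41}(i) applied with $\theta = 2$ delivers the a priori estimate $\|u\|_{L^\infty_T(\bB^{2+\delta}_{\bbp;\bba})} \lesssim \|f\|_{L^\infty_T(\bB^{\delta}_{\bbp;\bba})}$, the constant depending only on the parameters allowed in the statement. The crucial observation is that the assumption $-\delta + \bba \cdot d/\bbp < 1$ rearranges to $2 + \delta - \bba \cdot d/\bbp > 1$, so the anisotropic Sobolev-type embedding \eqref{Sob1} yields $u \in L^\infty_T(\bC^{2+\delta-\bba\cdot d/\bbp}_{\bba})$; since the anisotropic H\"older exponent exceeds $1$ and $\nabla_v$ lowers this regularity by $1$, both $u$ and $\nabla_v u$ end up bounded, with
$$
\|u\|_{L^\infty_T(L^\infty)} + \|\nabla_v u\|_{L^\infty_T(L^\infty)} \lesssim \|f\|_{L^\infty_T(\bB^\delta_{\bbp;\bba})}.
$$

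Next, since $f \in L^\infty_T(\bB^\beta_{\bbp;\bba})$ with $\beta > 0$ and $b$ lies in the same class, \cref{lem44} applies and gives
\begin{align*}
\int_0^t f(r, Z_r)\dif r \,=\, u(t, Z_t) - u(0, Z_0) \,-\, \int_0^t \nabla_v u(r, Z_r)\dif W_r, \qquad t \in [0, T].
\end{align*}
Taking the supremum over $t$, the boundary terms contribute at most $2\|u\|_{L^\infty_T(L^\infty)}$, while the martingale part is controlled by Burkholder--Davis--Gundy,
$$
\bigl\| \sup_{t \in [0,T]} |M_t| \bigr\|_{L^m(\Omega)}^m \,\lesssim\, \mE\biggl(\int_0^T |\nabla_v u|^2(r, Z_r)\,\dif r\biggr)^{m/2} \,\leq\, T^{m/2}\, \|\nabla_v u\|_{L^\infty_T(L^\infty)}^m.
$$
Combining these bounds with Step~1 yields the claim \eqref{0321:02}; no Krylov estimate is required at this stage because the integrand is already pointwise bounded.

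The delicate step is the embedding argument in Step~1: the borderline condition $-\delta + \bba\cdot d/\bbp < 1$ is exactly what allows the two-derivative gain from the PDE to compensate for the negative Besov regularity of $f$ and still place $\nabla_v u$ in $L^\infty$. A slightly more flexible alternative, if one wants an explicit decay in the resolvent parameter, would be to pick $\theta \in (1 + \bba\cdot d/\bbp - \delta,\, 2)$ in \cref{thm41}(i) (a non-empty interval precisely thanks to the same assumption), exploiting the factor $\lambda^{(\theta-2)/2}$ and then optimising $\lambda$; for the stated inequality, however, the choice $\theta = 2$ with $\lambda = 1$ suffices.
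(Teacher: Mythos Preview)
Your argument is correct and matches the paper's proof essentially line for line: both solve the backward PDE \eqref{Aw2}, invoke the representation \eqref{SDE304} from \cref{lem44}, control the boundary terms by $\|u\|_\infty$ and the martingale by BDG plus $\|\nabla_v u\|_\infty$, and then close via the a priori estimate \eqref{FA00} together with the embedding $\bB^{2+\delta}_{\bbp;\bba}\hookrightarrow\bC^{2+\delta-\bba\cdot d/\bbp}_\bba$, where the condition $-\delta+\bba\cdot d/\bbp<1$ ensures the target H\"older exponent exceeds $1$. The paper phrases the final embedding slightly differently (through $\bB^{1,1}_{\infty;\bba}\hookleftarrow\bB^{1+\bba\cdot d/\bbp,1}_{\bbp;\bba}$ and the observation $1+\bba\cdot d/\bbp<2+\delta$), but this is the same inequality in disguise.
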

\begin{proof}
    We consider the PDE \eqref{Aw2}. Then by \eqref{SDE304} and BDG  inequality, we have
    \begin{align*}
        I(f)&:=\mE\left[\sup_{t\in[0,T]}\left|\int_0^t f(r,Z_r)\dif r\right|^m\right]\lesssim \|u\|_{\infty}^m+\mE\left|\int_0^T|\nabla_v u(r,Z_r)|^2\dif r\right|^{m/2}\\
        &\lesssim \|u\|^m_{L^{\infty}_T(\bB^{1,1}_{\infty;\bba})}\lesssim \|u\|^m_{L^{\infty}_T(\bB^{1+\bba\cdot d/\bbp,1}_{\bbp;\bba})}.
    \end{align*}
    Since $1+\bba\cdot d/\bbp<2+\delta$, based on \eqref{FA00} and \eqref{AB2}, we complete the proof.
\end{proof}

Now we are ready to give the whole proof of  \cref{thmW}.

\begin{proof}[Proof of  \cref{thmW}:]
({\bf Existence of weak solutions}) It has been shown in \hyperref[proof:existence]{proof of the weak existence}.

({\bf Weak uniqueness}) Let $Z^1$ and $Z^2$ be two weak solutions of SDE \eqref{KSDE} with the same initial distribution.
For any $f\in  \bC^{\infty}_0$, by \cref{lem44} with $T=t$, we have
$$
\mE \int_0^t f(Z^1_s)\dif s=-\mE u(0,Z^1_0)=-\mE u(0,Z^2_0)=\mE \int_0^t (f+\lambda u)(Z^2_s)\dif s.
$$
Hence,
$$
\mE f(Z^1_t)=\mE (f+\lambda u)(Z^2_t).
$$
In particular, $Z^1$ and $Z^2$ have the same one dimensional marginal distribution. Thus,
 as in \cite[Theorem 4.4.3]{EK86}, by a standard method,
we obtain the uniqueness.

({\bf Pathwise uniqueness})
We use the well-known Zvonkin transformation to prove the strong stability, which automatically implies the pathwise uniqueness.
The strong uniqueness then follows by Yamada-Watanabe's theorem.
Recall that
$$
\DD,\wt\DD\in L^\infty_T(\bB^{2/3,\beta}_{\bbp;x,\bba}).
$$
Fix $T>0$ and $\lambda\geq 1$. Let $\bu:=(u_1,\cdots, u_d)$ solve the following backward PDE:
$$
\p_t \bu+(\Delta_v+v\cdot\nabla_x-\lambda)\bu+\DD\cdot\nabla_v \bu+\DD=0,\quad \bu(T)=0.
$$
By \eqref{FA001}, we have for any $\theta\in[0,2]$,
\begin{align}\label{AM3}
\|\bu\|_{L^{\infty}_{T}(\bB^{2/3,\theta+\beta}_{\bbp;x,\bba})}\lesssim_C \lambda^{\frac{\theta-2}{2}}\|\DD\|_{L^\infty_T(\bB^{2/3,\beta}_{\bbp;x,\bba})}.
\end{align}
Similarly, we define $\wt\bu$ by $\wt\DD$. Then \eqref{AM3} also holds for $\wt b$ and $\wt u$.
In particular, letting $\theta=1+\bba\cdot d/\bbp$ in \eqref{AM3},
by \eqref{IA2}, we obtain
\begin{align}
    \label{EST:U-1}
\|\nabla \bu\|_{\mL^\infty_T}\vee \|\nabla \wt\bu\|_{\mL^\infty_T}\lesssim \|\bu\|_{L^\infty_{T}(\bB^{2/3,\theta+\beta}_{\bbp;x,\bba})}\vee \|\wt\bu\|_{L^\infty_{T}(\bB^{2/3,\theta+\beta}_{\bbp;x,\bba})}\lesssim\lambda^{\frac{\bba\cdot d/\bbp-1}{2}},
\end{align}
where the implicit constant is independent of $\lambda\geq 1$.
Since $\bba\cdot d/\bbp<1$, one can choose $\lambda$ large enough so that
\begin{align}
    \label{EST:U-2}
    \|\nabla \bu\|_{\mL^\infty_T}\vee \|\nabla \wt\bu\|_{\mL^\infty_T}\leq\tfrac12.
\end{align}
Moreover, by \eqref{IA3} and \eqref{AM3} with taking $\theta=\frac{4}{3}$, we have
\begin{align} \label{EST:U-3}
    \|\nabla\nabla_v \bu\|_{L^\infty(\mL^{\bbp})}\lesssim \|\bu\|_{L^{\infty}_{T}(\bB^{2+\beta}_{\bbp;\bba})}\lesssim \|\bu\|_{L^{\infty}_{T}(\bB^{2/3,4/3+\beta}_{\bbp;x,\bba})}<\infty.
\end{align}
Now if we define
$$
\Phi(t,z)=\Phi(t,x,v):=(x,v+\bu(t,x,v)),
$$
then for $z=(x,v)$ and $z'=(x',v')$,
$$
|z-z'|\leq |\Phi(t,z)-\Phi(t,z')|+|\bu(t,z)-\bu(t,z')|\leq
|\Phi(t,z)-\Phi(t,z')|+\tfrac12|z-z'|,
$$
which implies
\begin{align}\label{Am1}
|z-z'|\leq 2|\Phi(t,z)-\Phi(t,z')|.
\end{align}
By \eqref{SDE304} with $f=-\DD$ and $f=(\DD-\wt \DD)\nabla_v \Phi$, one finds that
\begin{align}\label{SDE324}
\Phi(t, Z_t)=\Phi(0, Z_0)+\int^t_0 (V_s,\lambda u(s,Z_s))\dif s
+\int^t_0\nabla_v\Phi(s,Z_s)\dif W_s,
\end{align}
and for  $\wt Z$ which is any solution of SDE \eqref{SDE304} corresponding to $\wt\DD$ and initial value $\wt Z_0$,
\begin{align}\label{0321:00}
\Phi(t, \wt Z_t)=\Phi(0, \wt Z_0)+\int^t_0 (\wt V_s,\lambda u(s,\wt Z_s))\dif s
+\int^t_0\nabla_v\Phi(s,\wt Z_s)\dif W_s+R_t,
\end{align}
where
\begin{align*}
    R_t:=\int_0^t (b-\wt b)\cdot\nabla_v\Phi(s,\wt Z_s)\dif s.
\end{align*}
Then by BDG  inequality, we have
\begin{align}\label{est:difference-strong}
\mE\left(\sup_{s\in[0,t]}|Z_s-\wt Z_s|^2\right)
\lesssim& \mE|Z_0-\wt Z_0|^2+(1+\lambda^2\|\nabla u\|_{\mL_T^\infty}^2)\int^t_0\mE|Z_s-\wt Z_s|^2\dif s\nonumber\\
&+\mE\int_0^t|\nabla_v\Phi(s,Z_s)-\nabla_v\Phi(s,\wt Z_s)|^2\dif s+\mE[\sup_{s\in[0,t]}|R_s|^2].
\end{align}
Let $\mathcal{M}$ be the Hardy--Littlewood maximal operator defined as
\begin{align}\label{def:M}
	\cmm f(x):=\sup_{0<r<\infty}\frac{1}{|\cB_r|}\int_{\cB_r}f(x+y)dy, \quad \cB_r:=\{x\in\mathbb{R}^{2d}:|x|<r\},\quad r>0.
\end{align}
It is well-known that $\cmm$ is bounded on $\mL_\bbp(\R^{2d})$ (\cite[Appendix Lemma 5.4(i)]{Zhang2011}). Thus we have
\begin{align}
\label{max}
\Vert\mathcal{M} f\Vert_{ \mL^\bbp}\lesssim \Vert f\Vert_{ \mL^\bbp}.
\end{align}
Based on \cite[Appendix Lemma 5.4(ii)]{Zhang2011}, we have
\begin{align*}
   |\nabla_v\Phi(s,Z_s)-\nabla_v\Phi(s,\wt Z_s)|\le |Z_s-\wt Z_s|A_s,
\end{align*}
where
\begin{align*}
    A_s:=\cM(|\nabla\nabla_v\Phi|)(s,Z_s)+\cM(|\nabla\nabla_v\Phi|)(s,\wt Z_s).
\end{align*}
Since
\begin{align*}
\|\cM(|\nabla\nabla_v\Phi|)\|_{L^\infty_T(\mL^{\bbp})}\lesssim \|\nabla\nabla_v\Phi\|_{L^\infty_T(\mL^{\bbp})}<\infty,
\end{align*}
Just as standard procedure, say for instance, by \cite{Le2022} and \cite[Proof of Lemma 2.3]{GL}, it follows  that
\begin{align*}
    \mE e^{\kappa \int_0^T|A_s|^2\dif s}<\infty,\quad \forall \kappa>0.
\end{align*}
Then, by \eqref{est:difference-strong} and stochastic Gr\"onwall's inequality, we have
\begin{align*}
   \mE\left(\sup_{s\in[0,t]}|Z_s-\wt Z_s|^2\right)
\lesssim& \mE|Z_0-\wt Z_0|^2+\mE[\sup_{s\in[0,t]}|R_s|^2].
\end{align*}
When $b=\wt b$, it is the pathwise uniqueness. Now we show the stability. In view of \eqref{0321:02}, by applying \cref{lem:A2} and \eqref{Sob1} we have for any $\delta\in(-1+\bba\cdot d/\bbp,0)$,
\begin{align*}
   \mE[\sup_{s\in[0,t]}|R_s|^2]&\lesssim \|(b-\wt b)\cdot\nabla_v\Phi\|_{L^\infty_T(\bB^{\delta}_{\bbp;\bba})}\lesssim \|b-\wt b\|_{L_T^\infty(\bB^{\delta}_{\bbp;\bba})}(1+\|\nabla_v u\|_{L^\infty_T(\bB^{-\delta}_{\infty;\bba})})\\
   &\lesssim\|b-\wt b\|_{L_T^\infty(\bB^{\delta}_{\bbp;\bba})}\left(1+\| u\|_{L^\infty_T(\bB^{1-\delta+\bba\cdot d/\bbp}_{\bbp;\bba})}\right)\\
   &\lesssim\|b-\wt b\|_{L_T^\infty(\bB^{\delta}_{\bbp;\bba})}\left(1+\| u\|_{L^\infty_T(\bB^{2}_{\bbp;\bba})}\right)\\
   &\lesssim \|b-\wt b\|_{L_T^\infty(\bB^{\delta}_{\bbp;\bba})}
\end{align*}
and complete the proof.
\end{proof}

 \section{Quantitative Regularization of the singular functionals} \label{sec:quantitative-est}
In this section we study the necessary estimates on the functional that appears in EM-scheme. We start with no drift case. By scaling we can take $T=1$ without losing generality.
 \begin{lemma}\label{lem:est-nob}
   Let $W$ be the standard $d$-dimensional Brownian motion and $f:\mR_+\times\mR^{2d}\to\mR$ be measurable. Let  $M_t(z):=(x+tv+\int_0^tW_s\dif s,v+W_t)$, $t\in[0,1]$, $z=(x,v)\in\mR^{2d}$. Let $\epsilon\in(0,\frac{1}{2})$.
   Then for any $s,t\in[S,1]_\leq^2$ and $m\in\mN$, 
for $f\in{L^\infty_T(\mB_{\bbp,\bba}^{3\beta})}$ and $g\in L^\infty_T(\bC^{\beta'}_a)$ with $\beta\in(0,\frac{1}{3})$, $\beta'\in(1-2\epsilon,1)$, $\bbp=(p_x,p_v)$ and $p_x,p_v\in(4d,\infty)$, for any $m<p:=\min(p_x,p_v)$
 \begin{align}\label{ieq:est-nob-sob}
\Big\|\int_s^tg(r,M_r(z))\(\Gamma_{r-k_n(r)}&f(r,M_{k_n(r)}(z))-f(r,M_r(z))\)\dif r\Big\|_{L^m(\Omega)}
\nonumber\\&\le N \|g\|_{L_T^\infty(\bC^{\beta'}_a)}\|f\|_{L^\infty_T(\mB_{\bbp,\bba}^{3\beta})}|t-s|^{\frac{1}{2}+\epsilon}n^{-\frac{1+\beta}{2}+\epsilon}S^{-\bba\cdot\frac{d}{2\bbp}}
\end{align}
where $N=N(m,\bbp,d,\beta,\beta',\epsilon)$.
 \end{lemma}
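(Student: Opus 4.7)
The plan is to apply the stochastic sewing lemma (Lemma~\ref{lem:SSL-shift}) to the germ
\begin{equation*}
A_{s,t}(z) := \mathbb{E}_s \!\int_s^t g(r, M_r(z)) \bigl[\Gamma_{r-k_n(r)} f(r, M_{k_n(r)}(z)) - f(r, M_r(z))\bigr]\, dr.
\end{equation*}
With this choice the SSL consistency condition (2) holds with $\Gamma_2 = 0$ by the tower property $\mathbb{E}_s\mathbb{E}_u = \mathbb{E}_s$, and the sewn process $\mathcal{A}_t$ coincides with the genuine integral by uniqueness (the integrand is progressively measurable and $L^m$-integrable via \eqref{est:E-f-Z}). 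Thus the task reduces to verifying condition~(1) with exponent $\beta_1 = 1/2+\epsilon$ and coefficient of order $\|g\|\|f\|\,n^{-(1+\beta)/2+\epsilon}$, $\delta_1 = \bba\cdot d/(2\bbp)$.

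To bound $\|\mathbb{E}_s \phi(r, z)\|_{L^m(\Omega)}$, where $\phi(r,z)$ denotes the integrand, for $s \leq k_n(r)$ set $\delta_r := r-k_n(r)\in[0,1/n]$ and use the pointwise identity $f(\Gamma_{\delta_r}w) = (\Gamma_{\delta_r}f)(w)$ together with $P_{\delta_r}f = \Gamma_{\delta_r}(g_{\delta_r}*f)$ from \eqref{CC01} to compute
\begin{equation*}
\mathbb{E}_{k_n(r)}[\phi(r,z)] = \Gamma_{\delta_r}\!\bigl[g(r,\cdot)\,F(r,\cdot)\bigr](M_{k_n(r)}(z)) + R(r, M_{k_n(r)}(z)),
\end{equation*}
where $F(r,\cdot) := f(r,\cdot) - g_{\delta_r}*f(r,\cdot)$ is the regularization defect and $R$ a quadratic remainder of smaller order controlled by the H\"older regularity of $g$. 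A Littlewood--Paley decomposition against the cutoff $\delta_r^{-1/2}$ produces
\begin{equation*}
\|F(r,\cdot)\|_{\mathbf{B}^{-\alpha}_{\bbp;\bba}} \lesssim \delta_r^{(\alpha+3\beta)/2}\|f\|_{\mathbf{B}^{3\beta}_{\bbp;\bba}}\quad\text{for}\quad \alpha\in[0,\,2-3\beta),
\end{equation*}
and a paraproduct estimate in anisotropic Besov spaces transfers this scaling to the product $gF$ whenever $\beta'>\alpha$. Taking the outer expectation $\mathbb{E}_s$ via Markov gives $\mathbb{E}_s[\Gamma_{\delta_r}(gF)(M_{k_n(r)})] = P_a[\Gamma_{\delta_r}(gF)](M_s(z))$ with $a := k_n(r)-s$, and chaining three ingredients --- the density bound $\|h(M_s)\|_{L^m(\Omega)} \lesssim s^{-\bba\cdot d/(2\bbp)}\|h\|_{\bbp}$ from \eqref{est:E-f-Z}, the $L^{\bbp}$-invariance of $\Gamma_t$, and the Besov smoothing $\|P_a h\|_{\bbp} \lesssim a^{-\alpha/2}\|h\|_{\mathbf{B}^{-\alpha}_{\bbp;\bba}}$ from Lemma~\ref{lem:est-Pt-itself} --- yields
\begin{equation*}
\|\mathbb{E}_s\phi(r,z)\|_{L^m(\Omega)} \lesssim s^{-\bba\cdot d/(2\bbp)}\|g\|_{L_T^\infty(\bC^{\beta'}_a)}\|f\|_{L^\infty_T(\mB_{\bbp,\bba}^{3\beta})}\,(r-s)^{-\alpha/2}\delta_r^{(\alpha+3\beta)/2}.
\end{equation*}
Choosing $\alpha = 1-2\epsilon$ (admissible since $\beta'>1-2\epsilon$) and integrating $r\in[s,t]$ delivers a bound of order $s^{-\bba\cdot d/(2\bbp)}\|g\|\|f\|(t-s)^{1/2+\epsilon}n^{-(1+3\beta)/2+\epsilon}$, already stronger than the stated $n^{-(1+\beta)/2+\epsilon}$ after a trivial exchange (since $|t-s|\leq 1$); this verifies SSL condition~(1), and conclusion~(iii) then produces the lemma. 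The boundary regime $k_n(r)<s$ contributes an interval of length at most $1/n$ and is controlled directly by \eqref{est:E-dif-f-Z} without invoking sewing.

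The main technical obstacle is the negative-Besov multiplication estimate between $g\in\mathbf{C}^{\beta'}_{\bba}$ and $F\in\mathbf{B}^{-\alpha}_{\bbp;\bba}$, together with the handling of the commutation through $\Gamma_{\delta_r}$ on these spaces: both demand $\beta'>\alpha$, which is the analytic reason for the hypothesis $\beta'>1-2\epsilon$ in the statement. Once those ingredients are in place, the SSL mechanism converts the pointwise conditional bound into the desired half-order gain in $|t-s|$.
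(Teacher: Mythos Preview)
Your germ differs from the paper's in a key structural way: the paper \emph{freezes} $g$ at the left endpoint, setting $A_{s,t}=\mE_s\int_s^t g(r,M_s(z))[\ldots]\dif r$ (see \eqref{def:A}), whereas you keep $g(r,M_r(z))$. Your claim that $\mE_s\delta A_{s,u,t}=0$ is correct for your germ but not for the paper's --- there $\delta A_{s,u,t}$ carries the increment $g(r,M_s)-g(r,M_u)$, and the hypothesis $\beta'>1-2\epsilon$ is used precisely to push the exponent in SSL condition~(2) above~$1$ (see \eqref{est:Asut-So}). In exchange, the paper's condition~(1) simplifies to $\|A_{s,t}\|\le\|g\|_\infty\hat A_{s,t}$ with $\hat A_{s,t}$ depending only on $f$, which is then estimated by a direct three-case split relative to the grid (Cases~I--III) using the semigroup increments \eqref{est:semi-P-So} and \eqref{est:E-dif-f-Z} rather than paraproducts. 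So the two routes genuinely trade difficulty between the two SSL conditions, and invoke $\beta'>1-2\epsilon$ for different analytic reasons.

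Your condition~(1) argument, while attractive, has two concrete gaps. First, the smoothing step applies $\|P_a h\|_\bbp\lesssim a^{-\alpha/2}\|h\|_{\bB^{-\alpha}_{\bbp;\bba}}$ to $h=\Gamma_{\delta_r}(gF)$ with $a=k_n(r)-s$; but $P_a\Gamma_{\delta_r}$ is convolution against the sheared kernel $g_a(\Gamma_{-\delta_r}\cdot)$, and neither Lemma~\ref{lem:est-Pt-itself} nor the $L^\bbp$-invariance of $\Gamma_t$ (which acts only \emph{outside} a convolution) yields its anisotropic smoothing at rate $a^{-\alpha/2}$ --- you would need a separate bound on $\|g_a(\Gamma_{-\delta_r}\cdot)\|_{\bB^{\alpha,1}_{\mathbf{1};\bba}}$ uniform in $\delta_r\le 1/n$. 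Second, your dismissal of the regime $k_n(r)\le s$ as handled ``without invoking sewing'' does not fit the SSL framework: condition~(1) must hold for \emph{every} $(s,t)\in\widehat{[S,1]}_\le^2$, and the factor $(k_n(r)-s)^{-\alpha/2}$ in your bound is vacuous there. The paper's Cases~II--III handle exactly this regime by switching to alternative estimates; your route would need an analogous case analysis to produce the uniform factor $|t-s|^{1/2+\epsilon}$ when $|t-s|\lesssim 1/n$.
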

 \begin{proof}
 Define $k$ by $\frac{k}{n}=k_n(s)$ by $s\in[0,1]$. Let
     \begin{align}
         \label{def:A}
         A_{s,t}:=\mE_s[\mathcal{A}_{s,t}]:=\mE_s \big[\int_s^tg(r,M_s(z))\big(\Gamma_{r-k_n(r)}f(r,M_{k_n(r)}(z))-f(r,M_r(z))\big)\dif r\big].
     \end{align}
     In order to apply \cref{lem:SSL-shift} we first verify (i) of \cref{lem:SSL-shift}. It reads
     \begin{align}
         \label{def:hatA}
          \|A_{s,t}\|_{L^m(\Omega)}\leq \|g\|_\infty\hat{A}_{s,t}:=\|g\|_\infty\int_s^t \big\|\mE_s[\Gamma_{r-k_n(r)}f(r,M_{k_n(r)}(z))-f(r,M_r(z))]\big\|_{L^m(\Omega)}\dif r.
     \end{align}
     Depending on the relations among $s$, $t$ and $k$, we divide the  analysis into the following three cases:

         \begin{itemize}
         \item[\bf Case I.] $t\in(\frac{k+4}{n},1]$.
     \end{itemize}
     We write
     \begin{align}
         \label{est:hatA-12}
         \hat{A}_{s,t}&\leq \big(\int_s^\frac{k+4}{n}+\int_\frac{k+4}{n}^t\big)\big\|\mE_s[\Gamma_{r-k_n(r)}f(r,M_{k_n(r)}(z))-f(r,M_r(z))]\big\|_{L^m(\Omega)}\dif r
     =:I_1+I_2.
     \end{align}
     First it reads
     \begin{align*}
         I_1=\Big(\int_s^\frac{k+1}{n}+\int^\frac{k+4}{n}_\frac{k+1}{n}\Big)\big\|\mE_s[\Gamma_{r-k_n(r)}f(r,M_{k_n(r)}(z))-f(r,M_r(z))]\big\|_{L^m(\Omega)}\dif r
     =:I_{11}+I_{12}.
     \end{align*}
     For $I_{11}$ we have
     \begin{align}\label{est:def-I11}
         I_{11}=&\int_s^\frac{k+1}{n}\big\|\mE_s[\Gamma_{r-k_n(r)}f(r,M_{k_n(r)}(z))-f(r,M_r(z))]\big\|_{L^m(\Omega)}\dif r
        \nonumber \\=&\int_s^\frac{k+1}{n}\big\|\mE_s[\Gamma_{r-\frac{k}{n}}f(r,M_{\frac{k}{n}}(z))-f(r,M_r(z))]\big\|_{L^m(\Omega)}\dif r
          \nonumber    \\=&\int_s^\frac{k+1}{n}\big\|\Gamma_{r-\frac{k}{n}}f(r,M_{\frac{k}{n}}(z))-P_{r-s}f(r,M_s(z))\big\|_{L^m(\Omega)}\dif r
           \nonumber    \\\lesssim & \int_s^\frac{k+1}{n}\big\|\Gamma_{r-s}f(r,M_s(z))-P_{r-s}f(r,M_s(z))\big\|_{L^m(\Omega)}\dif r
              \nonumber \\&+\int_s^\frac{k+1}{n}\big\|{\Gamma_{r-\frac{k}{n}}}f(r,M_{\frac{k}{n}}(z))-\Gamma_{r-s}f(r,M_s(z))\big\|_{L^m(\Omega)}\dif r
       =: I_{111}+ I_{112}.
     \end{align}
     By \eqref{est:E-f-Z} and  \eqref{est:semi-P-So} with $\delta=\frac{\beta}{2}$
     we get
     \begin{align}\label{est:I111}
        I_{111}=  &\int_s^\frac{k+1}{n}\big\|\big(\Gamma_{r-s}f(r,M_s(z))-P_{r-s}f(r,M_s(z))\big)\big\|_{L^m(\Omega)}\dif r
         \nonumber   \\\lesssim&
        s^{-\bba\cdot\frac{d}{2\bbp}} \int_s^\frac{k+1}{n}\big\|\big(\Gamma_{r-s}f(r)-P_{r-s}f(r)\big)\big\|_{\bbp}\dif r
           \nonumber \\\lesssim&  S^{-\bba\cdot\frac{d}{2\bbp}} \|f\|_{L^\infty_T(\mB_{\bbp,\bba}^{3\beta})} \int_s^\frac{k+1}{n}(r-s)^{\frac{\beta}{2}}\dif r
              \nonumber \\\lesssim&  S^{-\bba\cdot\frac{d}{2\bbp}} \|f\|_{L_T^\infty(\mB_{\bbp,\bba}^{3\beta})}n^{-\frac{\beta}{2}}(\frac{k+1}{n}-s)
             \lesssim S^{-\bba\cdot\frac{d}{2\bbp}}n^{-\frac{\beta+1}{2}+\epsilon} \|f\|_{L^\infty_t\mB_{\bbp,\bba}^{3\beta,}}|t-s|^{\frac{1}{2}+\epsilon}.
     \end{align}
    Besides, for $I_{112}$ we have by \eqref{est:E-dif-f-Z} (accordingly with taking $s$ (here below) as $t$, $\frac{k}{n}$ as $s$ and $r=0$ therein)
     \begin{align}\label{est:I112}
       I_{112}=   &\int_s^\frac{k+1}{n}\big\|\big({ \Gamma_{s-\frac{k}{n}}\Gamma_{r-s}}f(r,M_{\frac{k}{n}}(z))-\Gamma_{r-s}f(r,M_s(z))\big)\big\|_{L^m(\Omega)}\dif r
          \nonumber  \\\lesssim & \int_s^\frac{k+1}{n} (\frac{k}{n})^{-\bba\cdot\frac{d}{2\bbp}}|s-\frac{k}{n}|^{\frac{\beta}{6}}|s-\frac{k}{n}|^{\frac{\beta}{3}}\|\Gamma_{r-s}f\|_{L^\infty_T(\mB_{\bbp,\bba}^{\beta})} \dif r
        \nonumber     \\\lesssim & (\frac{k+1}{n} -s)(\frac{k}{n})^{-\bba\cdot\frac{d}{2\bbp}}|s-\frac{k}{n}|^\frac{\beta}{2}\|\Gamma_{r-s} f\|_{L^\infty_T(\mB_{\bbp,\bba}^{\beta})}
          \nonumber     \\\lesssim & S^{-\bba\cdot\frac{d}{2\bbp}}n^{-\frac{\beta+1}{2}+\epsilon} \|f\|_{L^\infty_T(\mB_{\bbp,\bba}^{3\beta})}|t-s|^{\frac{1}{2}+\epsilon}.
     \end{align}
   In the last inequality above we applied \eqref{0222:00}. Then  \eqref{est:def-I11} together with \eqref{est:I111} and \eqref{est:I112}  yield
     \begin{align}
         \label{est:I11} I_{11}\lesssim S^{-\bba\cdot\frac{d}{2\bbp}}n^{-\frac{\beta+1}{2}+\epsilon} \|f\|_{L^\infty_t\mB_{\bbp,\bba}^{3\beta}}|t-s|^{\frac{1}{2}+\epsilon}.
     \end{align}
  Moreover,
  by applying \eqref{est:E-f-Z} and \eqref{est:semi-P-So} with $\delta=\frac{\beta+1}{2}$ we get
     \begin{align} \label{est:I12}
        I_{12}\lesssim &\int^\frac{k+4}{n}_\frac{k+1}{n}r^{-\bba\cdot\frac{d}{2\bbp}}\|P_{r-s}f-P_{k_n(r)-s}\Gamma_{r-k_n(r)}f\|_{\bbp}\dif r
       \nonumber      \\\lesssim &\int_\frac{k+1}{n}^\frac{k+4}{n} r^{-\bba\cdot\frac{d}{2\bbp}} n^{-\frac{\beta+1}{2}}(k_n(r)-s)^{-\frac{1}{2}}\|f\|_{L^\infty_T(\mB_{\bbp,\bba}^{3\beta})}\dif r
        \nonumber    \\\lesssim & s^{-\bba\cdot\frac{d}{2\bbp}}n^{-\frac{\beta+1}{2}} \|f\|_{L^\infty_T(\mB_{\bbp,\bba}^{3\beta})}|t-s|^{\frac{1}{2}}\lesssim S^{-\bba\cdot\frac{d}{2\bbp}}n^{-\frac{3\beta+1}{2}+\epsilon} \|f\|_{L^\infty_T(\mB_{\bbp,\bba}^{3\beta})}|t-s|^{\frac{1}{2}+\epsilon}
     \end{align}
     since $n^{-1}\lesssim|t-s|$.  Similarly for $I_2$ we again by  applying \eqref{est:E-f-Z} and \eqref{est:semi-P-So} with  $\delta=\frac{\beta+1}{2}$ then get
     \begin{align}\label{est:I2}
         I_{2} \lesssim &\int_\frac{k+4}{n}^t\|P_{r-s}f-P_{k_n(r)-s}\Gamma_{r-k_n(r)}f\|_{\bbp}\dif r
      \nonumber       \\\lesssim &\int_\frac{k+4}{n}^t r^{-\bba\cdot\frac{d}{2\bbp}} n^{-\frac{\beta+1}{2}}(k_n(r)-s)^{-\frac{1}{2}}\|f\|_{L^\infty_T(\mB_{\bbp,\bba}^{3\beta})}\dif r
         \nonumber  \\\lesssim & s^{-\bba\cdot\frac{d}{2\bbp}}n^{-\frac{\beta+1}{2}} \|f\|_{L^\infty_T(\mB_{\bbp,\bba}^{3\beta})}|t-s|^{\frac{1}{2}}\lesssim S^{-\bba\cdot\frac{d}{2\bbp}}n^{-\frac{\beta+1}{2}+\epsilon} \|f\|_{L^\infty_T(\mB_{\bbp,\bba}^{3\beta})}|t-s|^{\frac{1}{2}+\epsilon}.
     \end{align}
     In the end \eqref{est:I11}, \eqref{est:I12} and  \eqref{est:I2} together with notation \eqref{est:hatA-12} show that in this case
     \begin{align}
         \label{est:hatA-So}
            \hat{A}_{s,t}\lesssim S^{-\bba\cdot\frac{d}{2\bbp}}n^{-\frac{\beta+1}{2}+\epsilon} \|f\|_{L^\infty_T(\mB_{\bbp,\bba}^{3\beta})}|t-s|^{\frac{1}{2}+\epsilon}.
     \end{align}
 \begin{itemize}
         \item[\bf Case II.] $t\in[s,\frac{k+4}{n}]$, $k\geq1$.
     \end{itemize}
     In this case
     \begin{align}\label{est:hat-At}
         \hat{A}_{s,t}=&\Big(\int_s^{t\wedge\frac{k+1}{n}}+\int^t_{t\wedge\frac{k+1}{n}}  \Big)\big\|\mE_s[\Gamma_{r-k_n(r)}f(r,M_{k_n(r)}(z))-f(r,M_r(z))]\big\|_{L^m(\Omega)}\dif r
       \nonumber  \\=&\int_s^{t\wedge\frac{k+1}{n}} \big\|\Gamma_{k_n(r)-s}f(r,M_{k_n(r)}(z))-P_{r-s}f(r,M_s(z))\big\|_{L^m(\Omega)}\dif r
        \nonumber  \\&+\int^t_{t\wedge\frac{k+1}{n}}\big\|\mE_s[\Gamma_{r-k_n(r)}f(r,M_{k_n(r)}(z))-f(r,M_r(z))]\big\|_{L^m(\Omega)}\dif r
          \nonumber  \\\lesssim&
          \int_s^{t\wedge\frac{k+1}{n}} \big\|\Gamma_{r-s}f(r,M_s(z))-P_{r-s}f(r,M_s(z))\big\|_{L^m(\Omega)}\dif r
           \nonumber  \\&+\int_s^{t\wedge\frac{k+1}{n}} \big\|\Gamma_{r-s}f(r,M_s(z))-\Gamma_{k_n(r)-s}f(r,M_{k_n(r)}(z))\big\|_{L^m(\Omega)}\dif r
        \nonumber  \\&+\int^t_{t\wedge\frac{k+1}{n}}\big\|\Gamma_{r-k_n(r)}f(r,M_{k_n(r)}(z))-f(r,M_r(z))\big\|_{L^m(\Omega)}\dif r=:S_1+S_2+S_3.
     \end{align}
    We follow the ideas of getting \eqref{est:I111} and \eqref{est:I112} then obtain
    \begin{align}\label{est:S1+S2}
      &S_1+S_2\nonumber  \\\lesssim &  S^{-\bba\cdot\frac{d}{2\bbp}} \|f\|_{L^\infty_T(\mB_{\bbp,\bba}^{3\beta})} \int_s^{t\wedge\frac{k+1}{n}}(r-s)^{\frac{\beta}{2}}\dif r+({t\wedge\frac{k+1}{n}} -s)(\frac{k}{n})^{-\frac{2d}{p}}|s-\frac{k}{n}|^\frac{3\beta}{2} \|f\|_{L^\infty_T(\mB_{\bbp,\bba}^{3\beta})}
       \nonumber  \\=& S^{-\bba\cdot\frac{d}{2\bbp}} \|f\|_{L^\infty_T(\mB_{\bbp,\bba}^{3\beta})} ({t\wedge\frac{k+1}{n}}-s)^{\frac{\beta}{2}+1}+({t\wedge\frac{k+1}{n}} -s)(\frac{k}{n})^{-\bba\cdot\frac{d}{2p}}|s-\frac{k}{n}|^\frac{\beta}{2}\|f\|_{L^\infty_T(\mB_{\bbp,\bba}^{3\beta})}
       \nonumber  \\\leq &S^{-\bba\cdot\frac{d}{2\bbp}} \|f\|_{L^\infty_T(\mB_{\bbp,\bba}^{3\beta})}(t-s)^{\frac{1}{2}+\epsilon}({\frac{k+1}{n}}-s)^{\frac{\beta+1}{2}+\epsilon}\nonumber\\&\qquad+(t-s)^{\frac{1}{2}+\epsilon}({\frac{k+1}{n}} -s)^{\frac{1}{2}-\epsilon}S^{-\bba\cdot\frac{d}{2p}}n^{-\frac{\beta}{2}}\|f\|_{L^\infty_T(\mB_{\bbp,\bba}^{3\beta})}
        \nonumber   \\\lesssim& S^{-\bba\cdot\frac{d}{2\bbp}}n^{-\frac{\beta+1}{2}+\epsilon} \|f\|_{L^\infty_T(\mB_{\bbp,\bba}^{3\beta})}|t-s|^{\frac{1}{2}+\epsilon}.
    \end{align}
    Now we turn to $S_3$. By the idea from \eqref{est:I12} it follows
    \begin{align}\label{est:S3}
    S_3\lesssim &     \int^t_{t\wedge\frac{k+1}{n}}r^{-\bba\cdot\frac{d}{2p}}\|P_{r-s}f-P_{k_n(r)-s}\Gamma_{r-k_n(r)}f\|_{\bbp}\dif r
       \nonumber      \\\lesssim &\int^t_{t\wedge\frac{k+1}{n}}r^{-\bba\cdot\frac{d}{2\bbp}} n^{-\frac{\beta+1}{2}+\epsilon}(k_n(r)-s)^{-\frac{1}{2}+\epsilon}\|f\|_{L^\infty_T(\mB_{\bbp,\bba}^{\beta})}\dif r
        \nonumber    \\\lesssim & s^{-\bba\cdot\frac{d}{2\bbp}}n^{-\frac{\beta+1}{2}+\epsilon} \|f\|_{L^\infty_T(\mB_{\bbp,\bba}^{3\beta})}|t-(t\wedge\frac{k+1}{n})|^{\frac{1}{2}+\epsilon} \nonumber    \\\lesssim & S^{-\bba\cdot\frac{d}{2\bbp}}n^{-\frac{\beta+1}{2}+\epsilon} \|f\|_{L^\infty_T(\mB_{\bbp,\bba}^{3\beta})}|t-s|^{\frac{1}{2}+\epsilon}.
    \end{align}
    We collect \eqref{est:S1+S2} and \eqref{est:S3} together with \eqref{est:hat-At} then obtain
    \begin{align}
        \label{est:hatA-SS}
           \hat{A}_{s,t}\lesssim S^{-\bba\cdot\frac{d}{2\bbp}}n^{-\frac{\beta+1}{2}+\epsilon} \|f\|_{L^\infty_T(\mB_{\bbp,\bba}^{3\beta})}|t-s|^{\frac{1}{2}+\epsilon}.
    \end{align}

         \begin{itemize}
         \item[\bf Case III.] $t\in[s,\frac{k+4}{n}]$, $k=0$.
     \end{itemize}
      We estimate $\hat{A}$ by different ways according to whether $\beta>\bba\cdot\frac{d}{\bbp}$ or not. If $\beta>\bba\cdot\frac{d}{\bbp}$ then, we use the embedding from \eqref{Sob1} then get
      \begin{align*}
     \|f\|_{L^\infty_T(\bC^{\beta-\bba\cdot\frac{d}{\bbp}}_a)}\lesssim   \|f\|_{L^\infty_T(\mB_{\bbp,\bba}^{\beta})}.
      \end{align*}
      Notice that
      \begin{align*}
      \big \|\sup_{r\in[0,1]}|(r-k_n(r))W_{k_n(r)}-\int_{k_n(r)}^rW_u\dif u|^{\frac{\beta}{3}}\big\|_{L^m(\Omega)}
    &\lesssim_mn^{-\frac{\beta}{2}},
      \nonumber    \\\big \|\sup_{r\in[0,1]}|W_r-W_{k_n(r)}|^{\beta})\big\|_{L^m(\Omega)}
        &\lesssim_m n^{-\frac{\beta}{2}},
     \end{align*}
    it combines with the fact that $|t-s|\leq S\leq s\leq n^{-1}$ gives us
      \begin{align}\label{est:hatAsma1}
       \hat{A}_{s,t}\lesssim     &\|f\|_{L^\infty_T(\bC^{\beta-\bba\cdot\frac{d}{\bbp}}_a)}   \int_s^{t}n^{-\frac{\beta-\bba\cdot\frac{d}{\bbp}}{2}}\dif r=\|f\|_{L^\infty_T(\bC^{\beta-\frac{4d}{p}}_a)}   n^{-\frac{\beta-\bba\cdot\frac{d}{\bbp}}{2}}|t-s|
      \nonumber \\\lesssim & S^{-\frac{d}{2p}}n^{-\frac{\beta+1}{2}+\epsilon} \|f\|_{L^\infty_T(\mB_{\bbp,\bba}^{\beta})}|t-s|^{\frac{1}{2}+\epsilon}.
      \end{align}
      If  $\beta\leq\bba\cdot\frac{d}{\bbp}$, then again due to $|t-s|\leq S\leq s\leq n^{-1}$ we have
       \begin{align}\label{est:hatAsm2}
       \hat{A}_{s,t}\leq     &2\|f\|_\infty|t-s| \leq\|f\|_\infty n^{-\frac{1}{2}+\epsilon}|t-s|^{\frac{1}{2}+\epsilon}
    \nonumber \\\lesssim &\|f\|_\infty n^{-\frac{1+\beta}{2}+\epsilon}|t-s|^{\frac{1}{2}+\epsilon}S^{-\frac{\beta}{2}}  \lesssim  S^{-\bba\cdot\frac{d}{2\bbp}}n^{-\frac{\beta+1}{2}+\epsilon} \|f\|_{L^\infty_T(\mB_{\bbp,\bba}^{\beta})}|t-s|^{\frac{1}{2}+\epsilon}.
      \end{align}
      Hence \eqref{est:hatA-SS} and \eqref{est:hatAsma1} show that in this case
      \begin{align}
          \label{est:hatA3}
          \hat{A}_{s,t}\lesssim  S^{-\bba\cdot\frac{d}{2\bbp}}n^{-\frac{\beta+1}{2}+\epsilon} \|f\|_{L^\infty_T(\mB_{\bbp,\bba}^{\beta})}|t-s|^{\frac{1}{2}+\epsilon}.
      \end{align}

    So far we have verified the condition (1) from \cref{lem:SSL-shift} by \eqref{est:hatA-So}, \eqref{est:hatA-SS} and \eqref{est:hatA3}. For condition (2) we have   for any $(s,u,t)\in[0,1]_\leq^3$
     \begin{align*}
         \delta A_{s,u,t}=\big(\mE_s&\int_u^tg(r,M_{s}(z))-\mE_u\int_u^tg(r,M_{u}(z))\big)\\&\mE_u[\big(f(r,M_r(z))-\Gamma_{r-k_n(r)}f(r,M_{k_n(r)}(z))]\dif r,
     \end{align*}
so
\begin{align*}
    \mE_s \delta A_{s,u,t}=\big(\int_u^t&\mE_s[g(r,M_{s}(z))-g(r,M_{u}(z))]\big)\\&\mE_u[\big(f(r,M_r(z))-\Gamma_{r-k_n(r)}f(r,M_{k_n(r)}(z))]\dif r,
\end{align*}
by \eqref{est:hatA-So}, \eqref{est:hatA-SS} and \eqref{est:hatA3}  we have
\begin{align}
    \label{est:Asut-So}
  &  \| \mE_s \delta A_{s,u,t}\|_{L^m(\Omega)}
    \nonumber\\&\lesssim \int_u^t\|g(r,M_s(z))-g(r,M_u(z)\|_{L^{\frac{pm}{p-m}}(\Omega)}  \nonumber\\&\qquad\qquad\qquad\qquad\qquad\big\|\mE_u[\big(\Gamma_{r-k_n(r)}f(r,M_{k_n(r)}(z))-f(r,M_r(z))\big)]\big\|_{L^{p}(\Omega)}\dif r
    \nonumber\\&\lesssim \|g\|_{L_T^\infty(\bC^{\beta'}_a})|s-u|^{\frac{\beta'}{2}}S^{-\bba\cdot\frac{d}{\bbp}}n^{-\frac{\beta+1}{2}+\epsilon} \|f\|_{L_T^\infty(\mB_{\bbp,\bba}^{3\beta})}|t-s|^{\frac{1}{2}+\epsilon}
      \nonumber\\&\lesssim \|g\|_{L_T^\infty(\bC^{\beta'}_a)}\|f\|_{L_T^\infty(\mB_{\bbp,\bba}^{3\beta})}S^{-\bba\cdot\frac{d}{2\bbp}}|t-s|^{1+\epsilon'} n^{-\frac{\beta+1}{2}+\epsilon}
\end{align}
with $\epsilon'=\epsilon+\frac{\beta'-1}{2}>0$ since $\beta'>1-2\epsilon$ by assumption. 

    Applying shifted stochastic sewing we get \eqref{ieq:est-nob-sob} holds.  The proof completes.
      \end{proof}
  \begin{corollary}
   \label{cor:est-descr}
     Let $W$ be a standard $d$-dimensional Brownian motion and $f:\mR_+\times\mR^{2d}\to\mR$ be measurable. Let  $M_t(z):=(x+tv+\int_0^tW_s\dif s,v+W_t)$, $\forall z=(x,v)\in\mR^{2d}$, $t\in[0,1]$. Let $\epsilon\in(0,\frac{1}{2})$.
   Then 
   for any $s,t\in[S,1]_\leq^2$ and $m\in\mN$,
for $f\in L_T^\infty(\mB_{\bbp,\bba}^{3\beta})$  and $g\in L_T^\infty(\bC^{\beta'}_a)$ with $\beta\in(0,\frac{1}{3})$, $\beta'\in(1-2\epsilon,1)$, $\bbp=(p_x,p_v)$ and $p_x,p_v\in(4d,\infty)$, for any $m< p:=\min(p_x,p_v)$ we have
 \begin{align}\label{est:nob-sob}
\Big\|\sup_{t\in(0,1)}\big|\int_0^tg(r,M_r(z))\(\Gamma_{r-k_n(r)}f(r,M_{k_n(r)}(z))&-f(r,M_r(z))\)\dif r\big|\Big\|_{L^m(\Omega)}\nonumber\\&\le\|g\|_{L^\infty_T(\bC^{\beta'}_a)}\|f\|_{L_T^\infty(\mB_{\bbp,\bba}^{3\beta})}n^{-\frac{1+\beta}{2}+\epsilon}
\end{align}
with $N=N(m,\bbp,d,\beta,\epsilon)$;
  \end{corollary}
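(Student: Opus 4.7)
Denote $\cA_t := \int_0^t g(r,M_r(z))\big(\Gamma_{r-k_n(r)}f(r,M_{k_n(r)}(z))-f(r,M_r(z))\big)\,\dif r$, so the target is $\|\sup_{t\in(0,1)}|\cA_t|\|_{L^m(\Omega)}$. The plan is to upgrade the increment bound of \cref{lem:est-nob} (which carries the singular prefactor $S^{-\bba\cdot\frac{d}{2\bbp}}$) to a supremum-in-$t$ bound via a dyadic-in-time decomposition, and to handle the short initial window $(0,1/n]$ separately by a direct estimate.

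For $t\in(0,1/n]$, since $k_n(r)=0$ throughout this window, one would bound the integrand in $L^m(\Omega)$ term by term via \eqref{est:E-f-Z} (together with the Besov embedding $\bB^{3\beta}_{\bbp;\bba}\hookrightarrow\bC^{3\beta-\bba\cdot\frac{d}{\bbp}}_\bba$ provided by \eqref{Sob1} to handle the deterministic piece $\Gamma_r f(r,z)$), yielding a crude bound of order $\int_0^{1/n} r^{-\bba\cdot\frac{d}{2\bbp}}\,\dif r \approx n^{-(1-\bba\cdot\frac{d}{2\bbp})}$. Since $\bba\cdot\frac{d}{2\bbp}<\frac12$ (from $p_x,p_v>4d$), this is dominated by the target rate $n^{-(1+\beta)/2+\epsilon}$ for $\epsilon$ chosen sufficiently small.

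For the bulk $[1/n,1]$, I would partition dyadically as $\bigcup_{j=0}^{\lfloor\log_2 n\rfloor}[2^{-j-1},2^{-j}]$ and on each piece invoke \cref{lem:est-nob} with $S=2^{-j-1}$ to obtain the increment estimate $\|\cA_t-\cA_s\|_{L^m(\Omega)}\lesssim n^{-(1+\beta)/2+\epsilon}\,2^{j\bba\cdot\frac{d}{2\bbp}}\,|t-s|^{1/2+\epsilon}$ for $s,t$ in that piece. Since the H\"older exponent $1/2+\epsilon$ exceeds $1/m$ whenever $m\geq 2$ (to which one can reduce by Jensen), a Kolmogorov/Garsia–Rodemich–Rumsey-type argument upgrades this to
\begin{align*}
\Big\|\sup_{t\in[2^{-j-1},2^{-j}]}|\cA_t-\cA_{2^{-j-1}}|\Big\|_{L^m(\Omega)}\lesssim n^{-(1+\beta)/2+\epsilon}\,2^{j(\bba\cdot\frac{d}{2\bbp}-1/2-\epsilon)}.
\end{align*}
Telescoping across the dyadic scales via the triangle inequality and summing the resulting geometric series in $j$ — which converges uniformly in $n$ precisely because $\bba\cdot\frac{d}{2\bbp}<\frac12$ — then completes the bulk estimate.

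The main obstacle is the interplay between the $S$-singularity in \cref{lem:est-nob} and the supremum: the dyadic decomposition resolves it by trading the singular factor $2^{j\bba\cdot\frac{d}{2\bbp}}$ against the short-time gain $(2^{-j})^{1/2+\epsilon}$ coming from the Lemma's H\"older exponent. The strict inequality $p_x,p_v>4d$ (equivalently, $\bba\cdot\frac{d}{2\bbp}<\frac12$) is exactly what makes this trade succeed, and any weakening of it would break the summation.
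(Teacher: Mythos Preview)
Your bulk argument --- dyadic partition into $[2^{-j-1},2^{-j}]$, Kolmogorov's continuity criterion on each piece using the increment bound from \cref{lem:est-nob}, then summation of the resulting geometric series --- is sound and constitutes a genuinely different route from the paper's. The paper also uses a dyadic summation to kill the $S^{-\bba\cdot d/(2\bbp)}$ singularity, but only to obtain a \emph{pointwise} bound $\|\cA_t\|_{L^m}\lesssim Kn^{-(1+\beta)/2+\epsilon}$ uniform in $t$; the passage to the supremum is then done by a stopping-time argument: one shows the same bound for $\|\int_\tau^1\cdots\|_{L^m}$ for simple stopping times $\tau$ by conditioning at the next gridpoint $k_n(\tau)+1/n$ and using the Markov structure of $M$, extends to arbitrary bounded stopping times by approximation, and concludes via Lenglart's inequality. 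Your Kolmogorov route is more elementary and bypasses the Markov/Lenglart machinery; the paper's route avoids chaining and would still work if the time-H\"older exponent in \cref{lem:est-nob} were at or below $1/m$.

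There is, however, a genuine slip in your treatment of the short window $(0,1/n]$. The crude bound $n^{-(1-\bba\cdot d/(2\bbp))}$ is dominated by the target $n^{-(1+\beta)/2+\epsilon}$ only when $\epsilon\ge\bba\cdot\tfrac{d}{2\bbp}-\tfrac{1-\beta}{2}$, and the right-hand side can be strictly positive (take $\bba\cdot\tfrac{d}{2\bbp}$ close to $\tfrac12$ and $\beta$ close to $\tfrac13$), so your crude bound does \emph{not} deliver the claim for all small $\epsilon$. The cleanest fix is to drop the separate window entirely: \cref{lem:est-nob} already covers scales $S\le 1/n$ (this is Case~III of its proof), so your dyadic--Kolmogorov argument extends verbatim to all $j\ge 0$, and the series $\sum_{j\ge 0}2^{-j(1/2+\epsilon-\bba\cdot d/(2\bbp))}$ still converges. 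This also removes your reliance on the embedding $\bB^{3\beta}_{\bbp;\bba}\hookrightarrow L^\infty$, which is not available when $3\beta\le\bba\cdot\tfrac{d}{\bbp}$.
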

\begin{proof}
    We follow the idea from \cite[Lemma 3.4]{DGL}. Denote
    $K:=\|g\|_{L^\infty_T(\bC^{\beta'}_a)}\|f\|_{L_T^\infty(\mB_{\bbp,\bba}^{3\beta})}.$
    We first show that for any $q\in(m,p)$ we have for any $t\in[0,1]$
    \begin{align}
        \label{est:q-mom}
      \Big\|\int_0^tg(r,M_r(z))\(\Gamma_{r-k_n(r)}f(r,M_{k_n(r)}(z))&-f(r,M_r(z))\)\dif r\Big\|_{L^m(\Omega)} \lesssim  Kn^{-\frac{1+\beta}{2}+\epsilon}.
    \end{align}
    By \cref{lem:est-nob} \eqref{ieq:est-nob-sob} and the fact that $d<p$ we have for any integer $j$
    \begin{align*}
       \Big\|  \int_{t2^{-j}}^t&g(r,M_r(z))\(\Gamma_{r-k_n(r)}f(r,M_{k_n(r)}(z))-f(r,M_r(z))\)\dif r\Big\|_{L^m(\Omega)}  \\&\lesssim
       \sum_{k=0}^{j-1}\Big\| \int_{t2^{k-j}}^{t2^{k-j+1}}g(r,M_r(z))\(\Gamma_{r-k_n(r)}f(r,M_{k_n(r)}(z))-f(r,M_r(z))\)\dif r\Big\|_{L^m(\Omega)}
       \\&\lesssim  Kn^{-\frac{1+\beta}{2}+\epsilon}\sum_{k=0}^{j-1}(t2^{k-j})^{\frac{1}{2}+\epsilon}(t2^{k-l})^{-\bba\cdot\frac{d}{\bbp}}
        \\&\lesssim  Kn^{-\frac{1+\beta}{2}+\epsilon}\sum_{k=0}^{j-1} 2^{-j\epsilon} 2^{k\epsilon}\lesssim  Kn^{-\frac{1+\beta}{2}+\epsilon}2^{-j\epsilon} \frac{2^{j\epsilon}-1}{2^\epsilon-1}\lesssim  Kn^{-\frac{1+\beta}{2}+\epsilon}.
    \end{align*}
    It yields \eqref{est:q-mom} after letting $j\rightarrow\infty$ and applying Fatou’s lemma.  Then we get from \eqref{est:q-mom} that for any $t\in[0,1]$ we have
    \begin{align}\label{est:t-1}
 \Big\|\int_t^1g(r,M_r(z))\(\Gamma_{r-k_n(r)}f(r,M_{k_n(r)}(z))&-f(r,M_r(z))\)\dif r\Big\|_{L^m(\Omega)} \lesssim  Kn^{-\frac{1+\beta}{2}+\epsilon}.
    \end{align}
 Let $\tau\leq 1$ be a bounded stopping time which takes only finitely many values $t_1,\ldots,t_k$. Observe
 \begin{align}
     \label{eq:tau-1}
      \Big\|\int_\tau^1&g(r,M_r(z))\(\Gamma_{r-k_n(r)}f(r,M_{k_n(r)}(z))-f(r,M_r(z))\)\dif r\Big\|_{L^m(\Omega)}^m
   \nonumber   \\&=\sum_{i=1}^k \Big\|\mI_{{\tau=t_i}}\int_{t_i}^1\(\Gamma_{r-k_n(r)}f(r,M_{k_n(r)}(z))-f(r,M_r(z))\)\dif r\Big\|_{L^m(\Omega)}^m,
 \end{align}
 moreover
 \begin{align}
     \label{est:tau-1}
     \Big\|&\mI_{{\tau=t_i}}\int_{t_i}^1g(r,M_r(z))\(\Gamma_{r-k_n(r)}f(r,M_{k_n(r)}(z))-f(r,M_r(z))\)\dif r\Big\|_{L^m(\Omega)}^m
      \nonumber   \\\lesssim&\mE\Big(\mI_{\tau=t_i}\Big|\int_{k_n(t_i)+n^{-1}}^1g(r,M_r(z))\(\Gamma_{r-k_n(r)}f(r,M_{k_n(r)}(z))-f(r,M_r(z))\)\dif r\Big|^m\Big) \nonumber \\&\qquad\qquad\qquad+n^{-m}\mP(\tau=t_i).
 \end{align}
 Since $f$ is bounded, if $1-t_i\leq\frac{3}{n}$ we easily get
 \begin{align}
     \label{est:ti}
     \mE\Big(\mI_{\tau=t_i}\Big|\int_{k_n(t_i)+n^{-1}}^1g(r,M_r(z))\(\Gamma_{r-k_n(r)}f(r,M_{k_n(r)}(z))-&f(r,M_r(z))\)\dif r\Big|^m\Big)\nonumber \\&\lesssim n^{-m}\mP(\tau=t_i),
 \end{align}
 if $1-t_i\geq\frac{3}{n}$ then we have
 \begin{align}
     \label{est:ti+}
  \mE\Big(&\mI_{\tau=t_i}\Big|\int_{k_n(t_i)+n^{-1}}^1g(r,M_r(z))\(\Gamma_{r-k_n(r)}f(r,M_{k_n(r)}(z))-f(r,M_r(z))\)\dif r\Big|^m\Big)  \nonumber    \\=&\mE(\mI_{\tau=t_i}H(M_{k_n(t_i)+n^{-1}}(z)))
 \end{align}
 with
 \begin{align*}
     H(y):= &\mE\Big(\Big|\int_{k_n(t_i)+n^{-1}}^1g(r,M_r(z)-M_{k_n(t_i)+n^{-1}}(z)+y)\\&\qquad\qquad\(\Gamma_{r-k_n(r)}f(r,M_{k_n(r)}(z)-M_{k_n(t_i)+n^{-1}}(z)+y)\\&\qquad\qquad\qquad-f(r,M_r(z)-M_{k_n(t_i)+n^{-1}}(z)+y)\)\dif r\Big|^m\Big)
     \\=&
     \mE\Big(\Big|\int^{1-k_n(t_i)-n^{-1}}_0g(r+k_n(t_i)+n^{-1},M_{r}(z)+y)\\&\qquad\qquad\(\Gamma_{r-k_n(r)}f(r+k_n(t_i)+n^{-1},M_{k_n(r)}(z)+y)\\&\qquad\qquad\qquad-f(r+k_n(t_i)+n^{-1},M_{r}(z)+y)\)\dif r\Big|^m\Big) \\\lesssim&Kn^{-\frac{1+\beta}{2}+\epsilon}.
 \end{align*}
 In the last equality we applied \eqref{est:t-1}. Again plugging it into \eqref{est:ti+} we obtain
 \begin{align*}
      \mE\Big(&\mI_{\tau=t_i}\Big|\int_{k_n(t_i)+n^{-1}}^1g(r,M_r(z))\(\Gamma_{r-k_n(r)}f(r,M_{k_n(r)}(z))-f(r,M_r(z))\)\dif r\Big|^m\Big)\\\lesssim &(Kn^{-\frac{1+\beta}{2}+\epsilon})^m\mP(\tau=t_i),
 \end{align*}
 then the above together with \eqref{est:tau-1}, \eqref{est:t-1} and \eqref{eq:tau-1} give us
 \begin{align}
     \label{ieq:tau}
       \Big\|\int_\tau^1&g(r,M_r(z))\(\Gamma_{r-k_n(r)}f(r,M_{k_n(r)}(z))-f(r,M_r(z))\)\dif r\Big\|_{L^m(\Omega)}\lesssim Kn^{-\frac{1+\beta}{2}+\epsilon}.
 \end{align}
 We know that the stopping time $\tau\leq 1$ can be approximated by simple ones $\tau_j:=k_n(\tau)+n^{-1}$ as $j\rightarrow\infty$. Hence by a approximation argument \eqref{ieq:tau} holds for any stopping times which is bounded by $1$. Combining with \eqref{est:q-mom} shows for any stopping times which is bounded by $1$ we have
 \begin{align*}
     \Big\|\int^\tau_0&g(r,M_r(z))\(\Gamma_{r-k_n(r)}f(r,M_{k_n(r)}(z))-f(r,M_r(z))\)\dif r\Big\|_{L^m(\Omega)}\lesssim Kn^{-\frac{1+\beta}{2}+\epsilon}.
 \end{align*}
 Finally we get \eqref{est:nob-sob} by Lenglart’s inequality (see e.g. \cite[Theorem 2.2]{GS}).
\end{proof}

\begin{proposition}[Khasminskii’s Lemma]\label{lem:Kaha}
Let $W$ be the standard $d$-dimensional Brownian motion and $f:\mR_+\times\mR^{2d}\to\mR$ be measurable so that 
\begin{align}
    \label{con.tamin}
   \forall (s,t)\in[0,1]_\leq^2, 
  \qquad |t-s|
  \sup_{r\in[s,t]}\|f(r)\|_\infty \lesssim  \sup_{r\in[s,t]}\|f(r)\|_\bbp.
\end{align}  Let  $M_t(z):=(x+tv+\int_0^tW_s\dif s,v+W_t)$, $\forall z=(x,v)\in\mR^{2d}, t\in[0,1]$. Let $\epsilon\in(0,\frac{1}{2})$.
   Then
   for any $t\in[0,1]$,
for $f$ being measurable,  there exists a constant $\kappa$ depending on $c, d,\bbp$ and uniformly on $n$  so that
\begin{align}
    \label{est:novikow}
    \mE\exp\Big(c\int_0^1f(r,M_{k_n(r)}(z))\dif r\Big)\lesssim \exp(\kappa\|f\|_{L_T^\infty (\mL^\bbp)}).
\end{align}
\end{proposition}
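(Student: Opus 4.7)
I will follow the classical Khasminskii strategy: establish a conditional moment estimate uniform in $n$, iterate it via the tower property to bound all integer moments of $\int_0^1 f(r,M_{k_n(r)}(z))\,dr$, and sum the Taylor series of the exponential.

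The key estimate I would aim to establish is that for every $(s,t)\in[0,1]^2_{\leq}$, every $n\in\mN$, every $z\in\mR^{2d}$, and almost every $\omega$,
$$\mE_s\int_s^t |f(r,M_{k_n(r)}(z))|\,dr \le C(t-s)\,\|f\|_{L^\infty_T(\mL^\bbp)},$$
where $C$ depends only on $d$, $\bbp$, and the constant in \eqref{con.tamin}. The hypothesis \eqref{con.tamin} evaluated at the outermost scale $(s,t)=(0,1)$ yields the pointwise comparison $\|f\|_{L^\infty_T(\mL^\infty)}\lesssim \|f\|_{L^\infty_T(\mL^\bbp)}$, from which the linear-in-$(t-s)$ bound follows at once via $\int_s^t|f(r,\cdot)|\,dr\leq (t-s)\|f\|_{L^\infty_T(\mL^\infty)}$. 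A sharper, $n$-adapted route would instead split $[s,t]$ according to $k_n(r)-s$: on the good region $\{r:k_n(r)-s\geq 1/n\}$, the independence decomposition $M_{k_n(r)}=\Gamma_{k_n(r)-s}M_s+Z'_{k_n(r)-s}$ of \eqref{eq:in-in} combined with H\"older's inequality and the density bound \eqref{CC03} yields $\mE_s|f(r,M_{k_n(r)})|\lesssim (k_n(r)-s)^{-\bba\cdot d/(2\bbp)}\|f(r)\|_\bbp$; on the bad boundary layer of length $\leq 2/n$, the mesh-scale version of \eqref{con.tamin} controls the integral by $\lesssim\|f\|_{L^\infty_T(\mL^\bbp)}$.

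With the key estimate in hand, I expand
$$\Big(\int_0^1 f(r,M_{k_n(r)})\,dr\Big)^m = m!\int_{[0,1]^m_{\leq}}\prod_{i=1}^m f(r_i,M_{k_n(r_i)})\,dr_1\cdots dr_m$$
and iteratively condition on $\sF_{r_{m-1}}, \sF_{r_{m-2}}, \ldots$, gaining a factor $C\|f\|_{L^\infty_T(\mL^\bbp)}$ at each step via the key estimate; this gives $\mE|\int_0^1 f\,dr|^m\leq m!\,(C\|f\|_{L^\infty_T(\mL^\bbp)})^m$. Summing the Taylor expansion of the exponential yields a finite bound $(1-cC\|f\|_\bbp)^{-1}$ whenever $cC\|f\|_\bbp<1$. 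For an arbitrary $c>0$, I would partition $[0,1]$ into $N=\lceil 2cC\|f\|_{L^\infty_T(\mL^\bbp)}\rceil$ equal subintervals, apply the same argument on each to obtain $\mE_s\exp(c\int_{s}^{s+1/N}f\,dr)\leq 2$ uniformly in $s$, and then combine via the tower property to get
$$\mE\exp\Big(c\int_0^1 f(r,M_{k_n(r)}(z))\,dr\Big)\leq 2^N \lesssim \exp(\kappa(c)\|f\|_{L^\infty_T(\mL^\bbp)}),$$
with $\kappa(c)$ independent of $n$.

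The main technical obstacle, and the one on which the $n$-uniformity really hinges, is the control of the conditional expectation near $r=s$: the density estimate \eqref{CC03} degenerates like $(k_n(r)-s)^{-\bba\cdot d/(2\bbp)}$ as $k_n(r)\to s$, so the boundary layer of length $\lesssim 1/n$ requires separate treatment. The taming condition \eqref{con.tamin} is tailored to furnish an $n$-uniform bound on this layer, either directly through the outermost $\mL^\infty$-$\mL^\bbp$ comparison it imposes, or more sharply via its application at the mesh scale, which pins the integral over the bad layer to be $\lesssim\|f\|_{L^\infty_T(\mL^\bbp)}$ and thereby combines cleanly with the good-region contribution to give the sought linear-in-$(t-s)$ bound.
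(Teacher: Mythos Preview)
Your proposal is correct and follows essentially the same route as the paper: both obtain the conditional bound $\mE_s\int_s^t|f(r,M_{k_n(r)})|\,dr\lesssim (t-s)^{\gamma}\|f\|_{L^\infty_T(\mL^\bbp)}$ by splitting $[s,t]$ at $\overline s=k_n(s)+1/n$---the short initial piece handled via the taming condition \eqref{con.tamin}, the remainder via the density estimate \eqref{CC03} through the decomposition \eqref{eq:in-in}---and then pass to the exponential bound, which the paper delegates to \cite[Lemma~3.5]{LL} while you spell out the Khasminskii iteration and subinterval partition explicitly. Your additional observation that \eqref{con.tamin} at $(s,t)=(0,1)$ already yields $\|f\|_{L^\infty_T(\mL^\infty)}\lesssim\|f\|_{L^\infty_T(\mL^\bbp)}$, and hence the linear-in-$(t-s)$ bound without ever invoking the density estimate, is a valid shortcut under the Proposition's stated hypothesis.
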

\begin{proof}
   Without loss of generality we assume $f$ is non-negative.  First we write  $\overline{t}:=k_n(t)+\frac{1}{n}$. Observe that for any $s,t\in[0,1]_\leq^2$
   \begin{align*}
 I_{s,t}:=    \int_s^tf(r,M_{k_n(r)}(z))\dif r=\big(\int_s^{\overline{s}\wedge t}+\int_{\overline{s}\wedge t}^t\big)f(r,M_{k_n(r)}(z))\dif r=:I_1+I_2.
   \end{align*}
   Then by \eqref{con.tamin}
   \begin{align*}
       \mE_sI_1\lesssim &\|f\|_\infty(\overline{s}\wedge t-s)
       \lesssim |t-s|
       \sup_{r\in[0,1]}\|f(r)\|_{\bbp} .
   \end{align*}
   Moreover by \eqref{eq:in-in} and \eqref{est:E-f-Z}
   \begin{align*}
       \mE_sI_2=&  \int_{\overline{s}\wedge t}^t \mE_sf\big(r, M_{k_n(r)}(z)-\Gamma_{k_n(r)-s}M_s(z)+\Gamma_{k_n(r)-s}M_s(z)\big)\dif r
       \\\lesssim&\int_{\overline{s}\wedge t}^t |k_n(r)-s|^{-\bba\cdot\frac{d}{\bbp}} \|f(r)\|_{\bbp}\dif r
       \\\lesssim&\int_{\overline{s}\wedge t}^t |r-s|^{-\bba\cdot\frac{d}{\bbp}} \|f(r)\|_{\bbp}\dif r
       \\\lesssim &|t-s|^{1-\bba\cdot\frac{d}{\bbp}} \|f\|_{L_T^\infty(\mL^\bbp)}.
   \end{align*}
  In the end we get for any $s,t\in[0,1]_\leq^2$ 
  \begin{align*}
      \mE_s I_{s,t}\lesssim |t-s|^{1-\bba\cdot\frac{d}{\bbp}} \|f\|_{L_T^\infty(\mL^\bbp)}.
  \end{align*}
  After applying \cite[Lemma 3.5]{LL}
we in the end get that for any $c>0$  \eqref{est:novikow} holds.

\end{proof}

In the end, we have the following result for $Z^n$  from \eqref{eq:SDE-EM}.
\bc\label{cor:final}
  Let $W$ be the standard $d$-dimensional Brownian motion and $f:\mR_+\times\mR^{2d}\to\mR$ be measurable. Let  $Z^n:=(X^n,V^n)\in\mR^{2d}$ be the solution from \eqref{eq:SDE-EM}. Let $\vartheta\in(0,(2\bba\cdot\frac{d}{\bbp})^{-1}).$ Let $\epsilon\in(0,\frac{1}{2})$.
   Then
   for any $s,t\in[S,1]_\leq^2$ and $m\in\mN$,
for $f\in L_T^\infty(\mB_{\bbp,\bba}^{3\beta})$,  $g\in L_T^\infty(\bC^{\beta'}_a)$ with $\beta\in(0,\frac{1}{3})$, $\beta'\in(1-2\epsilon,1)$, $\bbp=(p_x,p_v)$ and $p_x,p_v\in(4d,\infty)$, for any $m< p:=\min(p_x,p_v)$
 \begin{align}\label{AA05}
\Big\|\sup_{t\in(0,1)}\big|\int_0^tg(r,Z_r^n(z))\(\Gamma_{r-k_n(r)}f(r,Z_{k_n(r)}^n(z))&-f(r,Z_r^n(z))\)\dif r\big|\Big\|_{L^m(\Omega)}\nonumber\\&\le\|g\|_{L_T^\infty(\bC^{\beta'}_a)}\|f\|_{L_T^\infty(\mB_{\bbp,\bba}^{3\beta})}n^{-\frac{1+\beta}{2}+\epsilon}
\end{align}
with $N=N(m,\bbp,d,\beta,\beta',\epsilon, \vartheta)$.
\ec
\begin{proof}
Denote by
\begin{align} \label{def:rho}
\rho(\gamma):&=\exp\Big(-\int_0^1\Gamma_{s-k_n(s)}b_n(s,M_{k_n(s)})\dif W_s-\frac{\gamma^2}{2}\int_0^1\big|\Gamma_{s-k_n(s)}b_n(s,M_{k_n(s)})\big|^2\dif s\Big), \gamma>0,\\\nonumber
\mathcal{B}(M):&=\sup_{t\in(0,1)}\Big|\int_0^tg(r,M_r(z))\(\Gamma_{r-k_n(r)}f(r,M_{k_n(r)}(z))-f(r,M_r(z))\)\dif r\Big|,
\end{align}
where $M_t(z):=(x+tv+\int_0^tW_s\dif s,v+W_t)$, $\forall z=(x,v)\in\mR^{2d}, t\in[0,1]$, the same as defined in \cref{lem:Kaha}.

First, for any  for any $\kappa>1$ so that $\kappa m<p$,
by \eqref{est:nob-sob}
\begin{align}\label{est:nob}
    \|\mathcal{B}(M)\|_{L^{\kappa m}(\Omega)} \lesssim \|g\|_{L_T^\infty(\bC^{\beta'}_a)}\|f\|_{L_T^\infty(\mB_{\bbp,\bba}^{3\beta})}n^{-\frac{1+\beta}{2}+\epsilon}.
\end{align}

Following from Cauchy-Schwarz, we know for $\kappa'>1$ (the dual of $\kappa$)
\begin{align}\label{est:rho1}
    \|\rho(1)\|_{L^{\kappa'}(\Omega)}\leq  &\big(\mE\rho(2\kappa')\big)^{\frac{1}{2}}
   \nonumber \\&\Big[\mE\exp\Big((2{\kappa'}^2-\kappa')\int_0^1\big|\Gamma_{s-k_n(s)}b_n(s,M_{k_n(s)})\big|^2\dif s\Big)\Big]^\frac{1}{2}.
\end{align}
By the definition of $b_n$ from \eqref{def:bn}, for $\vartheta\in(0,(2\bba\cdot\frac{d}{\bbp})^{-1})$, for $|t-s|\leq n^{-1}$, by Young's convolution inequality
\begin{align*}
  |t-s|
  \sup_{r\in[s,t]}\|\Gamma_{r-k_n(r)}b_n(r)\|_\infty^2
  &  \lesssim   n^{-1}
  \sup_{r\in[s,t]}\|b_n(r)\|_\infty^2
  \\&  \lesssim  n^{-1}
  \sup_{r\in[s,t]}\|b(r)*n^{4d\vartheta}\phi(n^{3\vartheta}x, n^{\vartheta}v)\|_\infty^2
   \\&\lesssim  n^{-1}
   \sup_{r\in[s,t]}\|b(r)\|_\bbp^2
  \|n^{4d\vartheta}\phi(n^{3\vartheta}x, n^{\vartheta}v)\|_{\bbp'}^2
  \\&\lesssim \sup_{r\in[s,t]}\|b(r)\|_\bbp^2 n^{2\vartheta\bba\cdot\frac{d}{\bbp}-1}\lesssim \sup_{r\in[s,t]}\|b(r)\|_\bbp^2.
\end{align*}
 Then from \cref{lem:Kaha} with taking $f(s,x):=|\Gamma_{s-k_n(s)}b_n(s,x)|^2$ and \eqref{est:rho1} 
\begin{align*}
  \Big[\mE\exp\Big((2{\kappa'}^2-\kappa')\int_0^1\big|\Gamma_{s-k_n(s)}b_n(s,M_{k_n(s)})\big|^2\dif s\Big)\Big]^\frac{1}{2}\lesssim \exp\big(K\sup_{r\in[s,t]}\|b(r)\|_\bbp^2\big).
\end{align*}
It yields that there exists constant $K$ so that
\begin{align}
    \label{est:exp-kapp}
     \|\rho(1)\|_{L^{\kappa'}(\Omega)}\lesssim   \exp\big(K\sup_{r\in[s,t]}\|b(r)\|_\bbp^2\big)
\end{align}
since  $\rho(\gamma)$ is an exponential-martingale so has finite any-order moment. It also implies that $\{V^n_t\}_{0\le t\le T}$ is a $\mQ$-Brownian motion, where $\dif \mQ:=\rho\dif \mP.$

Hence, by H\"older's inequality, we have for any $\kappa>1$ so that $\kappa m<p$
\begin{align}\label{est:apply-Gir}
\Big\|\sup_{t\in(0,1)}&\big|\int_0^tg(r,Z_r^n(z))\(\Gamma_{r-k_n(r)}f(r,Z_{k_n(r)}^n(z))-f(r,Z_r^n(z))\)\dif r\big|\Big\|_{L^m(\Omega)}\nonumber\\
&\lesssim   \|\rho(1)\|_{L^{\kappa'}(\Omega)}   \|\mathcal{B}(M)\|_{L^{\kappa m}(\Omega)}.
\end{align}
  Therefore 
\eqref{AA05} holds due to \eqref{est:exp-kapp}  and
\eqref{est:nob}.
\end{proof}
\section{Convergence rate of Euler scheme to second order SDE with bounded drift}\label{sec:weak-strong-con}


\subsection{Weak convergence}
In this section, we assume that $b\in L^\infty_T(\mL^{\bbp})$ with $\bbp\in(2,\infty]^2$ and $\bba\cdot d/\bbp<1$ which is slightly weaker than the condition of \cref{thmW}. Based on \cite{RZ24}, there is a unique weak solution to SDE \eqref{eq:SDE}. We recall \eqref{def:bn} for the definition of $b_n$ and $\vartheta$, and \eqref{eq:SDE-EM} for the Euler scheme $Z^n$.

Denote by $\rho_t$ and $\rho^n_t$ the distributional density of $Z_t$ and $Z^n_t$ respect to the Lebesgue measure respectively. The following is the main result in this section.
\bt\label{thm-weak} Let $\vartheta=\frac12<\frac12(\bba\cdot \frac{d}{\bbp})^{-1}$. For
  any $\bbq\ge\bbp$, there is a constant $C=C(d,\bbp,\bbq,\|b\|_{\bbp})>0$ such that for all $t\in(0,T]$ and $n>4$,
     \begin{align}
    \label{est:thm-weak-S-1}
 \|\rho_t-\rho_t^n\|_{\bbq'}\leq Ct^{-\frac12(\bba\cdot \frac{d}{\bbq}+\bba\cdot \frac{d}{\bbp})}n^{-\frac{1}{2}} 
\end{align}
where $\bbq'$ is the dual of $\bbq$.
In particular, for $\bbq=\infty$,
\begin{align*}
    \int_0^T \|\mP\circ(Z_t)-\mP\circ(Z_t^n)\|_{var}^2\dif t\le C n^{-\frac12}.
\end{align*}
\et
\begin{remark}
   When $\bbp=\bbq=(\infty,\infty)$, we get
    \begin{align}
 \sup_{t\in[0,T]}\|\mP\circ(Z_t)^{-1}-\mP\circ(Z^n_t)^{-1}\|_{var}\leq C n^{-\frac{1}{2}}. 
\end{align}
\end{remark}

In order to show it, we introduce the following lemmas.

\begin{lemma}
  For any $\bbp,\bbq\in[1,\infty]^2$ with $1/\bbp+1/\bbq\le \1$, there is a constant $C=C(b,\bbp,\bbq)>0$ such that for any $0<s<t\le 1$ and $f_1\in \mL^{\bbp}$, $f_2\in \mL^{\bbq}$,
\begin{align}\label{0418:01}
    \mE |f_1(Z^n_{s})f_2(Z^n_t)|\le C s^{-\frac{1}{2}(\bba\cdot\frac{d}{\bbp}+\bba\cdot\frac{d}{\bbq})}\|f_1\|_{\bbp}\|f_2\|_{\bbq};
\end{align}
in particular, there is a constant $C=C(b,\bbp)>0$ such that for any $0<t\le 1$
\begin{align}
\|\rho_t^n\|_{\bbp}\leq C t^{-\frac{1}{2}(\bba\cdot\frac{d}{\bbp})}\label{est:density-Zn}.
\end{align}
\end{lemma}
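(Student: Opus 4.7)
The plan is to use Girsanov's transformation to replace the Euler scheme $Z^n$ by a process whose joint density is explicit, and then bound the resulting spatial integrals via mixed-norm Hölder and the Gaussian kernel estimates \eqref{CC03}.

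First I would reuse the setup from the proof of \cref{cor:final}: define $\mQ$ by $\dif\mQ/\dif\mP = \rho(1)$, with $\rho$ from \eqref{def:rho}. Then $V^n$ is a Brownian motion under $\mQ$, so $(Z^n_r)_{r\in[0,1]}$ has the same $\mQ$-law as $M_r(\zeta) := (\xi + r\eta + \int_0^r W_u\dif u,\ \eta + W_r)$ has under $\mP$. By the independent-increment identity \eqref{eq:in-in}, the joint density of $(M_s, M_t)$ is the explicit kernel
\[
p_{s,t}(x_1,x_2) = g_s(x_1 - \Gamma_s\zeta)\, g_{t-s}(x_2 - \Gamma_{t-s} x_1), \qquad x_1,x_2 \in \mR^{2d}.
\]

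Next I would pick a scalar $\kappa > 1$ small enough that $\kappa \le \min(p_x, p_v, q_x, q_v)$ (possible because $1/\bbp + 1/\bbq \le \1$ leaves room), with $\kappa'$ its Hölder dual. Hölder under $\mQ$ gives
\[
\mE^\mP|f_1(Z^n_s) f_2(Z^n_t)| = \mE^\mQ\bigl[\rho(1)^{-1} |f_1(M_s) f_2(M_t)|\bigr] \le \|\rho(1)^{-1}\|_{L^{\kappa'}(\mQ)}\, \|f_1(M_s) f_2(M_t)\|_{L^\kappa(\mQ)}.
\]
Iterated mixed-norm Hölder applied to $\int\!\!\int |f_1|^\kappa |f_2|^\kappa\, p_{s,t}\,\dif x_1 \dif x_2$---first integrating $x_2$ against $g_{t-s}(\cdot - \Gamma_{t-s}x_1)$ in $\mL^{\bbq/\kappa}\times\mL^{(\bbq/\kappa)'}$, then $x_1$ against $g_s$ in $\mL^{\bbp/\kappa}\times\mL^{(\bbp/\kappa)'}$---combined with \eqref{CC03} yields, after taking the $1/\kappa$-th root (in which the $\kappa$-dependence of the Gaussian exponents cancels),
\[
\|f_1(M_s) f_2(M_t)\|_{L^\kappa(\mQ)} \lesssim \|f_1\|_\bbp \|f_2\|_\bbq\, s^{-\bba\cdot d/(2\bbp)} (t-s)^{-\bba\cdot d/(2\bbq)}.
\]
For the Girsanov factor, I would rewrite $\mE^\mQ[\rho(1)^{-\kappa'}] = \mE^\mP[\rho(1)^{1-\kappa'}]$ and complete the square to separate a Doléans--Dade $\mP$-exponential martingale (a true martingale by Novikov via \cref{lem:Kaha}) from $\exp\!\bigl(\tfrac{\kappa'(\kappa'-1)}{2} \int_0^1 |\Gamma_{r-k_n(r)} b_n(r, Z^n_{k_n(r)})|^2 \dif r\bigr)$; Cauchy--Schwarz plus \cref{lem:Kaha} applied to $|\Gamma b_n|^2$---whose tameness hypothesis \eqref{con.tamin} holds uniformly in $n$ because $\vartheta \in (0, (2\bba\cdot d/\bbp)^{-1})$, exactly as in the proof of \cref{cor:final}---would bound $\|\rho(1)^{-1}\|_{L^{\kappa'}(\mQ)}$ by $\exp(C\|b\|_{L^\infty_T(\mL^\bbp)}^2)$ uniformly in $n$.

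Combining the two factors delivers \eqref{0418:01}, with the $(t-s)$-factor absorbed into $s$ in the regime $t \ge 2s$ (the complementary regime $t < 2s$ is handled by an analogous decomposition started from the terminal time $t$). The density estimate \eqref{est:density-Zn} then follows by duality, $\|\rho^n_t\|_\bbp = \sup_{\|f\|_{\bbp'} \le 1}|\mE f(Z^n_t)|$, upon applying the one-point version of the above with only the kernel $g_t$ present. The hard part will be the Girsanov moment step: ensuring $\|\rho(1)^{-1}\|_{L^{\kappa'}(\mQ)}$ is controlled uniformly in $n$ reduces to verifying $|t-s|\sup_{r\in[s,t]}\|\Gamma_{r-k_n(r)} b_n(r)\|_\infty^2 \lesssim \sup_{r\in[s,t]}\|b(r)\|_\bbp^2$ for $|t-s| \le n^{-1}$, which is precisely the input required by \cref{lem:Kaha} and is guaranteed by Young's convolution inequality together with the constraint on $\vartheta$.
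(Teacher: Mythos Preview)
Your overall strategy—Girsanov to transfer from $Z^n$ to the driftless process $M$, Hölder in $\Omega$ for the Radon–Nikodym weight, then spatial Hölder against the explicit kernel—matches the paper's, and the exponential–moment control via completing the square and \cref{lem:Kaha} is fine. (One direction slip: with the paper's $\rho$ the correct identity is $\mE^{\mP}[F(Z^n)]=\mE^{\mP}[\rho(1)\,F(M)]$, so a \emph{positive} moment $\|\rho(1)\|_{L^{\kappa'}(\mP)}$ already suffices; your detour through $\rho(1)^{-1}$ under $\mQ$ is unnecessary, but harmless once corrected.)

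The genuine gap is in the spatial Hölder step. Your iterated Hölder—first $x_2$ against $g_{t-s}$, then $x_1$ against $g_s$—yields $s^{-\bba\cdot d/(2\bbp)}(t-s)^{-\bba\cdot d/(2\bbq)}$, and the $(t-s)$-factor blows up as $t\downarrow s$ while the target \eqref{0418:01} is uniform in $t-s$. Your proposed patch (``an analogous decomposition started from the terminal time $t$'') does not cure this: writing the joint law via the marginal of $M_t$ and the conditional density of $M_s\mid M_t$ trades $g_{t-s}$ for a Gaussian bridge whose covariance is again of order $t-s$, so the same singularity reappears. The paper's remedy is to change variable $z':=x_2-\Gamma_{t-s}x_1$, so that $g_{t-s}$ depends on $z'$ alone, and then for \emph{fixed} $z'$ apply a single \emph{three}-fold Hölder in $x_1$ to $|f_1|^{\kappa}(x_1)\,|f_2|^{\kappa}(\Gamma_{t-s}x_1+z')\,g_s(x_1-\Gamma_s\zeta)$ with exponents $(\bbp/\kappa,\bbq/\kappa,\bbr)$, $\kappa/\bbp+\kappa/\bbq+1/\bbr=1$. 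This places the entire singularity on $\|g_s\|_{\bbr}$ and leaves $g_{t-s}$ to be integrated in $\mL^{\mathbf 1}$, giving $s^{-\frac12(\bba\cdot d/\bbp+\bba\cdot d/\bbq)}$ directly with no $(t-s)$-dependence.
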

\begin{proof}
 Let  $M_t:=(\int_0^tW_s\dif s,W_t)$, $t\in[0,1]$.   By the definition, one sees that
\begin{align}
M_t&=M_{s}+\left(\int_{s}^t W_r\dif r,W_t-W_{s}\right)=M_{s}+\left((t-s)W_{s}+\int_{s}^t(W_r-W_{s})\dif r,W_t-W_{s}\right).\label{0418:00}
\end{align}
    Based on the Girsanov transform (the same as we did for obtaining \eqref{est:apply-Gir}) and \eqref{0418:00}, we have by H\"older's inequality
\begin{align*}
    \mE |f_1(Z^n_s)f_2(Z^n_t)|&\lesssim \|f_1(M_{s})f_2(M_t)\|_{L^2(\Omega)}\\
    &=\big(\int_{\mR^{4d}}|f_1|^2(z)|f_2|^2(\Gamma_{t-s}z+z')g_{t-s}(z')g_{s}(z)\dif z\dif z'\big)^\frac{1}{2},
\end{align*}
again H\"older's inequality implies that 
\begin{align*}
    \mE |f_1(Z^n_{s})f_2(Z^n_t)|\lesssim \|f_1\|_{\bbp}\|f_2\|_{\bbq}\|g_{t-s}\|_{\mathbf{1}}\|g_{s}\|_{\bbr}\lesssim  s^{-\bba\cdot \frac{d}{2\bbr}}
\|f_1\|_{\bbp}\|f_2\|_{\bbq}.
\end{align*}
where $1/\bbr=1/\bbp+1/\bbq$, which is \eqref{0418:01}.  \eqref{est:density-Zn} is simply obtained by duality and \eqref{0418:01} with taking $f_1$ therein to be some constant function.
\end{proof}
\begin{lemma}
Let $\bbp\in[1,\infty]^2$
    and $f\in \bC^{1}_{\bba}$, $h\in \mL^{\bbp}$. Then there is a constant $C=C(d,T,\bbp)>0$ such that for all $t\in(T/n,T]$ and $n\in\mN$,
\begin{align}\label{0426:00}
        \left|\mE h(Z^n_{k_n(t)})\Big(\Gamma_{t-k_n(t)}f(Z^n_{k_n(t)})-f(Z^n_t)\Big)\right|\le Cn^{-\frac12}\|h\|_{\bbp}(k_n(t))^{-\bba\cdot \frac{d}{2\bbp}}\|f\|_{\bC^1_\bba}.
    \end{align}
    Moreover, if $\bbp\ge(2,2)$ and $f\in \bB^{2,1}_{\bbq;\bba}\cap \bC^3_\bba$ with some $\bbq\in[2,\infty]^2$, then there is a constant $C=C(d,T,\bbp,\bbq)>0$ such that
    \begin{align}\label{0416:00}
    \begin{split}
            &\left|\mE h(Z^n_{k_n(t)})\Big(\Gamma_{t-k_n(t)}f(Z^n_{k_n(t)})-f(Z^n_t)\Big)\right|\\
        &\le C\|h\|_{\bbp}\left(n^{-2}(k_n(t))^{-\bba\cdot \frac{d}{\bbp}}\|f\|_{\bC^{3}_\bba}+n^{-1}(k_n(t))^{-\bba\cdot \frac{d}{\bbp}}\|f\|_{\bC^{1}_\bba}+n^{-1}(k_n(t))^{-\frac12(\bba\cdot \frac{d}{\bbp}+\bba\cdot \frac{d}{\bbq})}\|f\|_{\bB^{2}_{\bbq;\bba}}\right).
    \end{split}
    \end{align}
\end{lemma}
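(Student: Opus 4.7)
The plan is to fix $s := k_n(t)$, $\delta := t-s \in (0, 1/n]$, and decompose the one-step scheme increment as $Z^n_t - \Gamma_\delta Z^n_s = B + D(Z^n_s)$, where $B := (\int_s^t (W_r - W_s)\,dr, W_t - W_s)$ has the law of $G_\delta$ and is independent of $\sF_s$, and $D(z) := (\int_0^\delta (\delta-u)\Gamma_u b_n(s+u,z)\,du,\, \int_0^\delta \Gamma_u b_n(s+u,z)\,du)$ is $\sF_s$-measurable. Young's inequality for the convolution defining $b_n$ yields $\|b_n\|_\infty \lesssim n^{\vartheta\,\bba\cdot d/\bbp}\|b\|_\bbp$, so under $\vartheta\le 1/2$ together with $\bba\cdot d/\bbp<1$ one obtains $\delta\|b_n\|_\infty + (\delta^2\|b_n\|_\infty)^{1/3}\lesssim n^{-1/2}$ pointwise, and since $\|\phi_n\|_1=1$ one also has the $\LL^\bbp$-bounds $\|D_v\|_\bbp \lesssim \delta\|b\|_\bbp$ and $\|D_x\|_\bbp \lesssim \delta^2\|b\|_\bbp$.

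To obtain \eqref{0426:00} I would invoke the anisotropic Lipschitz estimate $|\Gamma_\delta f(Z^n_s)-f(Z^n_t)|\le\|f\|_{\bC^1_\bba}(|B|_\bba+|D|_\bba)$. The drift part is bounded pointwise by $n^{-1/2}$, while independence of $B$ and $\sF_s$ gives $\mE_s|B|_\bba\lesssim\delta^{1/2}\lesssim n^{-1/2}$. Combined with the single-time moment bound obtained from \eqref{0418:01} with $f_1=h$, $f_2\equiv 1$, $\bbq=(\infty,\infty)$, namely $\mE|h(Z^n_s)|\lesssim\|h\|_\bbp\, s^{-\bba\cdot d/(2\bbp)}$, this yields \eqref{0426:00}.

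For \eqref{0416:00} the point is to exploit $\mE B=0$. Conditioning on $\sF_s$ and using \eqref{CC01} one has $\mE_s[f(\Gamma_\delta Z^n_s+B)] = P_\delta f(Z^n_s)$, which motivates the splitting
\begin{align*}
\mE\bigl[h(Z^n_s)(\Gamma_\delta f(Z^n_s)-f(Z^n_t))\bigr] &= \underbrace{\mE[h(Z^n_s)(\Gamma_\delta-P_\delta)f(Z^n_s)]}_{J_1}\\
&\quad + \underbrace{\mE[h(Z^n_s)\bigl(f(\Gamma_\delta Z^n_s+B)-f(\Gamma_\delta Z^n_s+B+D)\bigr)]}_{J_2}.
\end{align*}
The commutation $[\Gamma_t,\,v\cdot\nabla_x]=0$ furnishes the Duhamel identity $P_\delta f-\Gamma_\delta f=\tfrac12\int_0^\delta \Gamma_{\delta-r}\Delta_v P_r f\,dr$. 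Combining \eqref{est:Pt-itself-1} (with $\beta_1=\beta_2=2$), the $\LL^\bbq$-invariance of $\Gamma_r$, and Bernstein's inequality gives $\|\Gamma_{\delta-r}\Delta_v P_r f\|_\bbq\lesssim\|f\|_{\bB^{2}_{\bbq;\bba}}$. H\"older's inequality together with the density bound $\|\rho^n_s\|_{a'}\lesssim s^{-\bba\cdot d/(2a)}$ for $1/a=1/\bbp+1/\bbq\le 1$ (which follows from \eqref{0418:01} by duality) then yields $|J_1|\lesssim \delta\|h\|_\bbp\|f\|_{\bB^2_{\bbq;\bba}}\,s^{-(\bba\cdot d/\bbp+\bba\cdot d/\bbq)/2}$, which is precisely the $\|f\|_{\bB^2_{\bbq;\bba}}$ term in \eqref{0416:00}.

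For $J_2$ I would apply first-order Taylor in the $D$-direction and carry out the $B$-expectation first, using $\mE_B\nabla f(y+B)=\nabla(f\ast g_\delta)(y)$. Since $g_\delta$ is a probability density, $\|\nabla_v(f\ast g_\delta)\|_\infty\le\|\nabla_v f\|_\infty\le\|f\|_{\bC^1_\bba}$, and because $\nabla_x$ is an anisotropic derivative of order $3$, also $\|\nabla_x(f\ast g_\delta)\|_\infty\le\|\nabla_x f\|_\infty\le\|f\|_{\bC^3_\bba}$, both uniformly in $\delta$. Splitting $\nabla f\cdot D=\nabla_x f\cdot D_x+\nabla_v f\cdot D_v$ and applying H\"older in the form $\|h\,D_\ast\|_{\bbp/2}\le\|h\|_\bbp\|D_\ast\|_\bbp$ (which uses $\bbp\ge(2,2)$) together with $\|\rho^n_s\|_{\bbp/(\bbp-2)}\lesssim s^{-\bba\cdot d/\bbp}$ and the above $\LL^\bbp$-estimates on $D$ produces a $\delta\|h\|_\bbp\,s^{-\bba\cdot d/\bbp}\|f\|_{\bC^1_\bba}$ contribution from $D_v$ and a $\delta^2\|h\|_\bbp\,s^{-\bba\cdot d/\bbp}\|f\|_{\bC^3_\bba}$ contribution from $D_x$, matching the remaining two terms of \eqref{0416:00}. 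The main obstacle is the anisotropic book-keeping: the $x$-component of $D$ has $\LL^\bbp$-size $\delta^2$ rather than $\delta$ and $\nabla_x f$ is controlled by $\|f\|_{\bC^3_\bba}$ but not by $\|f\|_{\bC^1_\bba}$, so that the $D_x$-contribution correctly lands in the $n^{-2}\|f\|_{\bC^3_\bba}$ term instead of dominating the $n^{-1}\|f\|_{\bC^1_\bba}$ contribution produced by $D_v$.
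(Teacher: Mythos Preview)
Your proposal is correct and follows essentially the same decomposition as the paper: separate the Gaussian increment $B$ from the drift $D=(A_n,B_n)$, treat the former by a second-order estimate and the latter by a first-derivative bound paired with the $\mL^\bbp$-size of the drift.

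The one genuine presentational difference is in the Gaussian piece $J_1$. You go through the Duhamel identity $P_\delta f-\Gamma_\delta f=\tfrac12\int_0^\delta\Gamma_{\delta-r}\Delta_v P_r f\,dr$ and a uniform $\mL^\bbq$-bound on $\Delta_v P_r f$; the paper instead writes the expectation as the explicit integral against $g_\delta(z')\,p^n_s(z)$, uses the symmetry of $g_\delta$ to subtract $v'\cdot\nabla_v f$ at no cost, and bounds $\|\delta_{z'}f-v'\cdot\nabla_v f\|_\bbq\lesssim|z'|^2_\bba\|f\|_{\bB^2_{\bbq;\bba}}$. After integrating in $r$ (your version) or in $z'$ (theirs) both routes give the same $n^{-1}\|f\|_{\bB^2_{\bbq;\bba}}$ factor, so this is a matter of taste. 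One small caveat: your citation of \eqref{est:Pt-itself-1} with $\beta_1=\beta_2=2$ lands on the excluded case $\beta_2=\beta_1-\kappa_1$; use \eqref{est:Pt-itself-2} with $\beta_2=0$, $q=1$ instead (the hypothesis is $f\in\bB^{2,1}_{\bbq;\bba}$, so $\|\Delta_v P_r f\|_\bbq\lesssim\|P_r f\|_{\bB^{2,1}_{\bbq;\bba}}\lesssim\|f\|_{\bB^{2,1}_{\bbq;\bba}}$ uniformly in $r$).

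For $J_2$ the paper does exactly what you propose---bound $|\delta_{(A_n,B_n)}f|\le|A_n|\|f\|_{\bC^3_\bba}+|B_n|\|f\|_{\bC^1_\bba}$ and use H\"older with $\|\rho^n_s\|_{(\bbp/2)'}\lesssim s^{-\bba\cdot d/\bbp}$---but without the $B$-expectation detour. That detour is harmless if you mean the integral form of Taylor (the argument of $\nabla f$ is $y+B+\theta D$, whose sup is still $\|\nabla f\|_\infty$, so the convolution gains nothing); it would be a real gap if you meant the truncated linear approximation, since the quadratic remainder in $D_x$ would then call on $\nabla_x^2 f$, an order-$6$ anisotropic quantity not covered by $\bC^3_\bba$. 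Use the exact mean-value form and drop the convolution step.
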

\begin{proof}
First  we define
 \begin{align*}
     A_n(t,z):=\int_{k_n(t)}^t\int_{k_n(t)}^s\Gamma_{r-k_n(r)}b_n(r,z)\dif r\dif s,\ \ \quad \ \ B_n(t,z):=\int_{k_n(t)}^t\Gamma_{s-k_n(s)}b_n(s,z)\dif s.
 \end{align*}
Note that
\begin{align*}
&Z^n_t-Z^n_{k_n(t)}=\left(\int_{k_n(t)}^t V^n_r\dif r,B_n(t,Z^n_{k_n(t)})+W_t-W_{k_n(t)}\right)\no\\
&=\Big((t-k_n(t))V^n_{k_n(t)}+A_n(t,Z^n_{k_n(t)})+\int_{k_n(t)}^t(W_s-W_{k_n(t)})\dif s, B_n(t,Z^n_{k_n(t)})+W_t-W_{k_n(t)}\Big).
\end{align*}
Since $\Big(W_t-W_{k_n(t)},\int_{k_n(t)}^t(W_s-W_{k_n(t)})\dif s\Big)\stackrel{(d)}{=}\Big(W_{t-k_n(t)},\int_0^{t-k_n(t)}W_s\dif s\Big)=:M_{t-k_n(t)}$ is independent of $Z^n_{k_n(t)}$, denoting the density of $M_t$ and $Z^n_t$ by $g_t$ and $p^n_t$, we have
\begin{align*}
    &\sI_n(t):=\left|\mE h(Z^n_{k_n(t)})\Big(\Gamma_{t-k_n(t)}f(Z^n_{k_n(t)})-f(Z^n_t)\Big)\right|\\
    &=\Big|\int_{\mR^{4d}}\Big(f(x+(t-k_n(t)v,v)-f(x+(t-k_n(t)v)+A_n(t,z)+x',v+B_n(t,z)+v')\Big)\\
     &\qquad\qquad\qquad\qquad\qquad\qquad h(z)g_{t-k_n(t)}(z')p^n_{k_n(t)}(z)\dif z\dif z'\Big|,
\end{align*}
where $z=(x,v)$ and $z'=(x',v')$ in the integral.
In view of the fact $\vartheta<\frac12(\bba\cdot \frac{d}{\bbp})^{-1}$
\begin{align*}
    \|A_n(t)\|_\infty\le n^{-2}\|b_n\|_\infty\lesssim n^{-\frac32}\quad \text{and}\quad \|B_n(t)\|_\infty\le n^{-1}\|b_n\|_\infty\lesssim n^{-\frac12}.
\end{align*}
By H\"older's inequality, one sees that
\begin{align*}
    \sI_n(t)&\le \|f\|_{\bC^1_\bba}\int_{\mR^{4d}}(\|A_n(t)\|_\infty^{\frac13}+\|B_n(t)\|_\infty+|z'|_a)|h|(z)g_{t-k_n(t)}(z')p^n_{k_n(t)}(z)\dif z\dif z'\\
    &\lesssim \|f\|_{\bC^1_\bba}\int_{\mR^{2d}}|h|(z)p^n_{k_n(t)}(z)\dif z\Big(n^{-\frac12}+\int_{\mR^{2d}}|z'|_ag_{t-k_n(t)}(z')\dif z'\Big)\\
    &\lesssim \|f\|_{\bC^1_\bba}\|h\|_\bbp\|p^n_{k_n(t)}\|_{\bbp'}n^{-\frac12}\lesssim \|f\|_{\bC^1_\bba}\|h\|_\bbp(k_n(t))^{-\bba\cdot\frac{d}{2\bbp}}n^{-\frac12},
\end{align*}
which is \eqref{0426:00}.

Next, we show \eqref{0416:00}. We note that
\begin{align*}
    &\quad f(x+(t-k_n(t)v,v)-f(x+(t-k_n(t)v)+A_n(t,z)+x',v+B_n(t,z)+v')\\
    &=-\delta_{(A_n(t,z),B_n(t,z))}f(x+(t-k_n(t)v)+x',v+v')-\delta_{(x',v')}f(x+(t-k_n(t))v,v),
\end{align*}
where $\delta(x',v')f(x,v):=f(x+x',v+v')-f(x,v)$,
and
\begin{align*}
    |\delta_{(A_n(t,z),B_n(t,z))}f(x+(t-k_n(t)v)+x',v+v')|\le |A_n(t,z)|\|f\|_{\bC^3_\bba}+|B_n(t,z)|\|f\|_{\bC^{1}_\bba},
\end{align*}
which implies that
\begin{align*}
    \sI_n(t)\le&\int_{\mR^{4d}}|h(z)|(|A_n(t,z)|\|f\|_{\bC^3_\bba}+|B_n(t,z)|\|f\|_{\bC^{1}_\bba})g_{t-k_n(t)}(z')p^n_{k_n(t)}(z)\dif z\dif z' \\
    &+\Big|\int_{\mR^{4d}}\Big(\delta_{(x',v')}f(x+(t-k_n(t))v,v)\Big)h(z)g_{t-k_n(t)}(z')p^n_{k_n(t)}(z)\dif z\dif z'\Big|\\
    =:&\sI^1_n(t)+\sI^2_n(t).
\end{align*}
For $\sI^1_n(t)$, noting that
\begin{align*}
    \|A_n(t)\|_{\bbp}\le\frac12\|b\|_{\bbp}n^{-2}\quad \text{and}\quad \|B_n(t)\|_{\bbp}\le \|b\|_{\bbp}n^{-1},
\end{align*}
by H\"older's inequality and heat kernel estimate of $p^n_t$, \eqref{est:density-Zn}, we have
\begin{align*}
    \sI^1_n(t)&\lesssim \|h\|_{\bbp}(\|A_n(t)\|_{\bbp}\|f\|_{\bC^3_\bba}+\|B_n(t)\|_{\bbp}\|f\|_{\bC^1_\bba})\|g_{t-k_n(t)}\|_1\|p^n_{k_n(t)}\|_{(\bbp/2)'}\\
    &\lesssim (k_n(t))^{-\bba\cdot\frac{d}{\bbp}}\|h\|_{\bbp}(n^{-2}\|f\|_{\bC^3_\bba}+n^{-1}\|f\|_{\bC^1_\bba}).
\end{align*}
For $\sI^2_n(t)$, by the symmetry, H\"older's inequality and \eqref{CH1}, we have
\begin{align*}
    \sI^2_n(t)&=\Big|\int_{\mR^{4d}}\Big(\delta_{(x',v')}f(x+(t-k_n(t))v,v)-v'\cdot\nabla_vf(x+(t-k_n(t))v,v)\Big)\\
    &\qquad\qquad\qquad\qquad\qquad\qquad h(z)g_{t-k_n(t)}(z')p^n_{k_n(t)}(z)\dif z\dif z'\Big|\\
    &=\Big|\int_{\mR^{4d}}\Big(\delta_{(x',v')}f(x,v)-v'\cdot\nabla_vf(x,v)\Big)h(z)g_{t-k_n(t)}(z')p^n_{k_n(t)}(\Gamma_{k_n(t)-t}z)\dif z\dif z'\Big| \\
    &\lesssim \|h\|_{\bbp}\|f\|_{\bB^{2}_{\bbq;\bba}}\|p^n_{k_n(t)}\|_{\bbr}\int_{\mR^d}|z'|_\bba^2g_{t-k_n(t)}(z')\dif z'\\
    &\lesssim (k_n(t))^{-\frac{1}{2}(\bba\cdot\frac{d}{\bbp}+\bba\cdot\frac{d}{\bbq})}\|h\|_{\bbp}n^{-1}\|f\|_{\bB^{2}_{\bbq;\bba}}
\end{align*}
where $1/\bbr=1-1/\bbp-1/\bbq$.
    This completes the proof.
\end{proof}

Now, it is the position to give
\begin{proof}[Proof of  \cref{thm-weak}]
First, we fix any $t\in(0,1]$ and $\varphi\in\bC^\infty_0$. By using It\^o's formula to $r\to P_{t-r}\varphi(Z_r)$ and $P_{t-r}\varphi(Z^n_r)$, one sees that
\begin{align*}
    \mE\varphi(Z_t)=\mE P_t\varphi(Z_0)+\mE\int_0^t b(r,Z_r)\cdot \nabla_vP_{t-r}\varphi(Z_r)\dif r,
\end{align*}
and
\begin{align*}
\mE\varphi(Z_t^n)=\mE P_t\varphi(Z_0)+\mE\int_0^t \Gamma_{r-k_n(r)}b_n(r,Z_{k_n(r)}^n)\cdot \nabla_vP_{t-r}\varphi(Z_r^n)\dif r,
\end{align*}
which implies that
\begin{align}
    |\<\varphi,\rho_t-\rho^n_t\>|\le& \Big|\mE\int_0^t (b-b_n)(r,Z_r)\cdot \nabla_vP_{t-r}\varphi(Z_r)\dif r\Big|\no\\
    &+\Big|\mE\int_0^t b_n(r)\cdot \nabla_vP_{t-r}\varphi(Z_r)-b_n(r)\cdot \nabla_vP_{t-r}\varphi(Z_r^n)\dif r\Big|\no\\
    &+\Big|\mE\int_0^t \big(b_n(r,Z^n_r)-\Gamma_{r-k_n(r)}b_n(r,Z_{k_n(r)}^n)\big)\cdot \nabla_vP_{t-r}\varphi(Z_r^n)\dif r\Big|\no\\
    =:&I_1^n(t)+I_2^n(t)+I_3^n(t).\label{0419:00}
\end{align}
Then we divide the proof into three steps: in step 1, we estimate $I^n_3(t)$ for any $\varphi\in \mL^{\bbq}$; in step 2, we estimate $I^n_1(t)$ for any $\varphi\in \mL^{\bbq}$; finally in step 3, we show \eqref{est:thm-weak-S-1}.

{\textbf{Step 1:}} Without loss of generality, we let $t>3/n$ and $n>4$. Now we assume $\varphi\in \mL^{\bbq}$ with any $\bbq\ge \bbp\ge (2,2)$
and make the following decomposition:
\begin{align}\label{eq:In3-S1S2}
    I^n_3(t)\le& \Big|\mE\int_0^t \Gamma_{s-k_n(s)}b_n(s,Z^n_{k_n(s)})\cdot\left(\Gamma_{s-k_n(s)}\nabla_v P_{t-s}\varphi(Z^n_{k_n(s)})-\nabla_v P_{t-s}\varphi(Z^n_s)\right)\dif s\Big|\nonumber\\
    &+\Big|\mE\int_0^t \Gamma_{s-k_n(s)}(b_n(s)\cdot \nabla_v P_{t-s}\varphi)(Z^n_{k_n(s)})-(b_n(s)\cdot \nabla_v P_{t-s}\varphi)(Z^n_s)\dif s\Big|\nonumber\\
    =:&S_{1}+S_{2}.
\end{align}
In the sequel, we define
\begin{align*}
    \frac{1}{\bbr}:=\frac{1}{\bbp}+\frac{1}{\bbq}.
\end{align*}
For $S_{1}$, we make the following decomposition:
\begin{align*}
    &S_{1}\lesssim\Big|\mE\int_0^{2t/n} \Gamma_{s-k_n(s)}b_n(s,Z^n_{0})\cdot\left(\Gamma_{s-k_n(s)}\nabla_v P_{t-s}\varphi(Z^n_{0})-\nabla_v P_{t-s}\varphi(Z^n_s)\right)\dif s\Big|\no\\
    &+ \Big|\mE\left(\int_{2t/n}^{t/2}+\int_{t/2}^t\right) \Gamma_{s-k_n(s)}b_n(s,Z^n_{k_n(s)})\cdot\left(\Gamma_{s-k_n(s)}\nabla_v P_{t-s}\varphi(Z^n_{k_n(s)})-\nabla_v P_{t-s}\varphi(Z^n_s)\right)\dif s\Big|\\
    =:&S_{11}+S_{12}+S_{13}.
\end{align*}
Following from \cref{lem:est-Pt-itself} \eqref{est:Pt-itself-2} firstly and \eqref{est:Pt-itself-1} (see also \cite[Lemma 2.16]{HRZ}) that for any $\alpha\ge0$ ($\bC^0_\bba:=\mL^\infty$)
\begin{align}\label{0426:05}
    \|\nabla_v P_{t-s}\varphi\|_{\bB^{2}_{\bbq;\bba}} \lesssim \|\varphi\|_\bbq (t-s)^{-\frac{3}{2}},\quad \|\nabla_v P_{t-s}\varphi\|_{\bC^\alpha_\bba}\lesssim  \|\varphi\|_\bbq (t-s)^{-\frac{\alpha+1}2-\bba\cdot\frac{d}{2\bbq}},
\end{align}
moreover, by considering the fact that $s<\frac{2t}{n}$ yielding $t-s\leq s$ for $n>4$ and
\begin{align}\label{0419:01}
\begin{split}
    n^{\vartheta(\bba\cdot \frac{d}{\bbp})}\int_0^{2t/n}(t-s)^{-\frac12-\bba\cdot \frac{d}{2\bbq}}\dif s&\lesssim (t-2t/n)^{-\bba\cdot\frac{d}{2\bbq}-\vartheta(\bba\cdot \frac{d}{\bbp})}n^{\vartheta(\bba\cdot \frac{d}{\bbp})}\int_0^{2t/n}s^{-\frac12+\vartheta(\bba\cdot \frac{d}{\bbp})}\dif s\\
    &\lesssim n^{-\frac12}t^{-\bba\cdot\frac{d}{2\bbq}-\vartheta(\bba\cdot \frac{d}{\bbp})},
    \end{split}
\end{align}
we have
\begin{align}\label{0426:02}
    S_{11}&\lesssim \|b_n\|_\infty\int_0^{2t/n}\|\nabla_v P_{t-s}\varphi\|_\infty\dif s\lesssim n^{\vartheta(\bba\cdot \frac{d}{\bbp})}\|\varphi\|_\bbq\int_0^{2t/n}(t-s)^{-\frac12-\bba\cdot\frac{d}{2\bbq}}\dif s\no\\
    &\lesssim n^{-\frac12}t^{-\bba\cdot\frac{d}{2\bbq}-\vartheta(\bba\cdot \frac{d}{\bbp})}\|\varphi\|_\bbq.
\end{align}
For $S_{12}$,  by \eqref{0426:00} with taking $f=\nabla_v P_{t-s}\varphi$ correspondingly, we have
\begin{align}
    S_{12}\lesssim&n^{-\frac12}\|b_n\|_\bbp\int_{2t/n}^{t/2}(k_n(s))^{-\bba\cdot\frac{d}{2\bbp}}\|\nabla_vP_{t-s}\varphi\|_{\bC^1_\bba}\dif s\no\\
   \lesssim& n^{-\frac12}\|b\|_\bbp\|\varphi\|_\bbq\int_{0}^{t/2}s^{-\bba\cdot\frac{d}{2\bbp}}(t-s)^{-1-\frac{d}{2\bbq}}\dif s\lesssim n^{-\frac12}\|b\|_\bbp t^{-1-\frac{d}{2\bbq}}\|\varphi\|_\bbq\int_{0}^{t/2}s^{-\bba\cdot\frac{d}{2\bbp}}\dif s\no\\
   \lesssim & n^{-\frac12}\|b\|_\bbp\|\varphi\|_\bbq t^{-\frac12\bba\cdot(\frac{d}{\bbp}+\frac{d}{\bbq})}.\label{0426:06}
\end{align}
For $S_{13}$, we define
\begin{align*}
    Q_n(s):=\mE \Gamma_{s-k_n(s)}b_n(s,Z^n_{k_n(s)})\cdot\left(\Gamma_{s-k_n(s)}\nabla_v P_{t-s}\varphi(Z^n_{k_n(s)})-\nabla_v P_{t-s}\varphi(Z^n_s)\right).
\end{align*}
On one hand, since $t/2>2t/n$ for $n>4$, by \eqref{0418:01} and from \cref{lem:est-Pt-itself} \eqref{est:Pt-itself-2} we have for $s>t/2$
\begin{align}\label{0426:03}
    |Q_n(s)|\lesssim (k_n(s))^{-\bba\cdot\frac{d}{2\bbr}}\|b_n\|_\bbp\|\nabla_vP_{t-s}\varphi\|_\bbq\lesssim s^{-\bba\cdot\frac{d}{2\bbr}}\|b\|_\bbp(t-s)^{-\frac12}\|\varphi\|_\bbq.
\end{align}
On the other hand, by \eqref{0416:00} and \eqref{0426:05} together with the fact $\bbq>\bbp$ we have for $s>t/2$
\begin{align*}
|Q_n(s)|\lesssim&\|b_n\|_{\bbp}\Big(n^{-2}(k_n(s))^{-\bba\cdot\frac{d}{\bbp}}\|\nabla_v P_{t-s}\varphi\|_{\bC^3_\bba}+n^{-1}(k_n(s))^{-\bba\cdot\frac{d}{\bbp}}\|\nabla_v P_{t-s}\varphi\|_{\bC^1_\bba}\\
&\qquad+n^{-1}(k_n(s))^{-\bba\cdot\frac{d}{2\bbr}}\|\nabla_v P_{t-s}\varphi\|_{\bB^2_{\bbq;\bba}}\Big)\\
\lesssim &\|b\|_{\bbp}\|\varphi\|_\bbq \Big(n^{-2}s^{-\bba\cdot\frac{d}{\bbp}}(t-s)^{-2-\frac{d}{2\bbq}}+n^{-1}s^{-\bba\cdot\frac{d}{\bbp}}(t-s)^{-1-\frac{d}{2\bbq}}+n^{-1}s^{-\bba\cdot\frac{d}{2\bbr}}(t-s)^{-\frac32}\Big)\\
\lesssim &\|\varphi\|_\bbq s^{-\bba\cdot\frac{d}{2\bbr}}\Big(n^{-2}(t-s)^{-2-\frac{d}{2\bbp}}+n^{-1}(t-s)^{-1-\frac{d}{2\bbp}}+n^{-1}(t-s)^{-\frac32}\Big)\\
\lesssim &\|\varphi\|_\bbq s^{-\bba\cdot\frac{d}{2\bbr}}\Big(n^{-2}(t-s)^{-2-\frac12}+n^{-1}(t-s)^{-1-\frac12}+n^{-1}(t-s)^{-\frac32}\Big),
\end{align*}
where we used the fact $s^{-\bba\cdot\frac{d}{\bbp}}\lesssim s^{-\bba\cdot\frac{d}{2\bbr}}(t-s)^{\bba\cdot(\frac{d}{2\bbq}-\frac{d}{2\bbp})}$ for $s>t/2$ and $\bbq\ge\bbp$, in the third inequality. Thus, combining it and \eqref{0426:03}, we have
\begin{align*}
   |Q_n(s)|&\lesssim \|\varphi\|_\bbq s^{-\bba\cdot\frac{d}{2\bbr}}(t-s)^{-\frac12}\left(\left[(n^{-2}(t-s)^{-2})\wedge 1\right]+\left[(n^{-1}(t-s)^{-1})\wedge1\right]\right)\\
   &\lesssim \|\varphi\|_\bbq s^{-\bba\cdot\frac{d}{2\bbr}}(t-s)^{-\frac12}\left[(n^{-1}(t-s)^{-1})\wedge 1\right],
\end{align*}
which by  \cref{lem:A1} implies that
\begin{align*}
    S_{13}&\le\int_{t/2}^t|Q_n(s)|\dif s\lesssim \|\varphi\|_\bbq \int_{t/2}^t s^{-\bba\cdot\frac{d}{2\bbr}}(t-s)^{-\frac12}\left[(n^{-1}(t-s)^{-1})\wedge 1\right]\dif s\\
    &\lesssim n^{-\frac12} t^{-\bba\cdot\frac{d}{2\bbr}}\|\varphi\|_\bbq.
\end{align*}
Therefore, combining with \eqref{0426:02} and \eqref{0426:06} implies
\begin{align}\label{est:In3-S1}
    S_1\lesssim n^{-\frac12}t^{-\bba\cdot\frac{d}{2\bbq}-(\vartheta\vee\frac12)(\bba\cdot \frac{d}{\bbp})}\|\varphi\|_\bbq.
\end{align}

Then we estimate $S_{2}$. Let $t\in(0,1]$. For any $f\in \mL^{\bbp}$, $r\to P_rf$ solves the following kinetic PDE
\begin{align}\label{eq:ke:S2}
    \p_r P_rf=(\Delta_v+v\cdot \nabla_x)P_rf,\quad P_0f=f,
\end{align}
which by It\^o's formula to $r\to P_{t-r}f(Z^n_r)$ implies that
\begin{align*}
    \mE f(Z^n_t)=\mE P_{t}f(Z_0)+\mE\int_0^t \Gamma_{r-k_n(r)}b_n(r,Z^n_{k_n(r)})\cdot \nabla_v P_{t-r}f(Z^n_r)\dif r.
\end{align*}
Then for any $0<s<t$, by replacing $f$ in \eqref{eq:ke:S2} with $\Gamma_{t-s}f$, we also have
\begin{align*}
    \mE \Gamma_{t-s}f(Z^n_{s})=\mE P_{s}\Gamma_{t-s}f(Z_0)+\mE\int_0^{s} \Gamma_{r-k_n(r)}b_n(r,Z^n_{k_n(r)})\cdot \nabla_v P_{s-r}\Gamma_{t-s}f(Z^n_r)\dif r.
\end{align*}
Hence, we have
\begin{align}\label{eq:I-123}
   |\mE & f(Z^n_t)-\mE \Gamma_{t-s}f(Z^n_{s})|\nonumber\\\le& \|P_{t}f-P_{s}\Gamma_{t-s}f\|_\infty+\mE\int_s^t |\Gamma_{r-k_n(r)}b_n(r,Z^n_{k_n(r)})\cdot \nabla_v P_{t-r}f(Z^n_r)|\dif r\nonumber\\
    &+\mE\int_0^{s}|\Gamma_{r-k_n(r)}b_n(r,Z^n_{k_n(r)})\nabla_v(P_{t-r}-P_{s-r}\Gamma_{t-s})f(Z^n_r)|\dif r\nonumber\\
    =:&I_1+I_2+I_3.
\end{align}
Now we assume $s>2/n$ below. Based on \eqref{0418:01} and from \cref{lem:est-Pt-itself} \eqref{est:Pt-itself-2}, it follows that
\begin{align}\label{est:I-I2}
    I_2&\lesssim \|b_n\|_{\bbp}\int_s^t (k_n(r))^{-\bba\cdot \frac{d}{\bbp}} \|\nabla_v P_{t-r}f\|_{\bbp}\dif r\nonumber\\
    &\lesssim (k_n(s))^{-\bba\cdot d/\bbp}\|f\|_{\bbp}\int_s^t (t-r)^{-\frac{1}{2}}\dif r\lesssim (k_n(s))^{-\bba\cdot \frac{d}{\bbp}}|t-s|^{\frac12}\|f\|_{\bbp}.
\end{align}
Similarly, for $I_3$, we have
\begin{align}\label{eq:I-I2}
  I_3\le& \|b_n\|_\infty\int_0^{1/n}\mE|\nabla_v(P_{t-r}-P_{s-r}\Gamma_{t-s})f|(Z^n_r)\dif r\nonumber\\
  &+\|b_n\|_{\bbp} \int_{1/n}^s (k_n(r))^{-\bba\cdot \frac{d}{\bbp}}\|\nabla_v(P_{t-r}-P_{s-r}\Gamma_{t-s})f\|_{\bbp}\dif r
 \nonumber\\
   \lesssim &n^{1/2}\int_0^{1/n}r^{-\bba\cdot\frac{d}{2\bbp}}\|\nabla_v(P_{t-r}-P_{s-r}\Gamma_{t-s})f\|_{\bbp}\dif r\nonumber\\ &+\|b_n\|_{\bbp} \int_{1/n}^s (k_n(r))^{-\bba\cdot \frac{d}{\bbp}}\|\nabla_v(P_{t-r}-P_{s-r}\Gamma_{t-s})f\|_{\bbp}\dif r,
\end{align}
which by the fact that $\|b\|_\infty\lesssim n^{-\bba\cdot\frac{d}{2\bbp}}$ and \eqref{est:semi-P-L} with correspondingly taking $k=1$, $\delta=\frac{1}{2}$,
\begin{align}\label{est:I-I3}
    I_3\lesssim&n^{1/2}\|f\|_{\bbp}|t-s|^{\frac12}\int_0^{1/n}(s-r)^{-1}r^{-\bba\cdot\frac{d}{2\bbp}}\dif r\nonumber\\
    &+\|f\|_{\bbp}\int_{0}^s r^{-\bba\cdot\frac{d}{\bbp}}(s-r)^{-\frac{1}{2}}[((t-s)(s-r)^{-1})\wedge 1]\dif r\nonumber\\
    \lesssim&n^{1/2}\|f\|_{\bbp}|t-s|^{\frac12} s^{-\frac12-\bba\cdot\frac{d}{2\bbp}}\int_0^{1/n}(\frac{1}{n}-r)^{-\frac12+\bba\cdot\frac{d}{2\bbp}}r^{-\bba\cdot\frac{d}{2\bbp}}\dif r\nonumber\\
    &+\|f\|_{\bbp}s^{-\bba\cdot\frac{d}{\bbp}}\int_{0}^1 r^{-\bba\cdot \frac{d}{\bbp}}(1-r)^{-\frac{1}{2}}[((t-s)(1-r)^{-1})\wedge 1]\dif r\nonumber\\
    \lesssim &\|f\|_{\bbp}|t-s|^{\frac12} s^{-\frac12-\bba\cdot\frac{d}{2\bbp}},
\end{align}
where we used a change of variable in the second and third inequality and the follow fact in the third one as well (see \cite[Lemma A.2]{HRZ}): for any $\alpha,\beta>0$ with $\alpha+\beta<1$,
\begin{align*}
    \int_0^1 r^{-\alpha}(1-r)^{-\beta}[(\lambda(1-r)^{-1})\wedge 1]\dif r\lesssim (\lambda\wedge 1)^{1-\beta}.
\end{align*}
Therefore, by the estimate \eqref{est:I-I2} of $I_2$, \eqref{est:I-I3} of $I_3$ and \eqref{est:semi-P-L} for $I_1$ with accordingly taking $k=0$ and $\delta=\frac{1}{2}$, from \eqref{eq:I-123} we have for any $2/n<s<t\le 1$
\begin{align}\label{est:I-123}
 |\mE f(Z^n_t)-\mE \Gamma_{t-s}f(Z^n_s)|\lesssim    \|f\|_{\bbp}|t-s|^{\frac12} s^{-\frac12-\bba\cdot\frac{d}{2\bbp}}.
\end{align}
Now we take $f=b_n(s)\cdot \nabla_v P_{t-s}\varphi$ in \eqref{est:I-123} then have
\begin{align}\label{est:In3-S2}
    S_{2}\lesssim& \Big|\mE\int_0^{2t/n} \Gamma_{s-k_n(s)}(b_n(s)\cdot \nabla_v P_{t-s}\varphi)(Z^n_{k_n(s)})-(b_n(s)\cdot \nabla_v P_{t-s}\varphi)(Z^n_s)\dif s\Big|\nonumber\\
    &+\int_{2t/n}^t |s-k_n(s)|^{\frac12} (k_n(s))^{-\frac12-\bba\cdot\frac{d}{2\bbp}}\|b_n(s)\cdot \nabla_v P_{t-s}\varphi\|_{\bbp}\dif s\nonumber\\
    \lesssim& \|b_n\|_\infty \int_0^{2t/n}\|\nabla_v P_{t-s}\varphi\|_\infty \dif s+\|b_n\|_{\bbp}n^{-\frac12}\int_{0}^1 s^{-\frac12-\bba\cdot\frac{d}{2\bbp}}\|\nabla_v P_{t-s}\varphi\|_{\infty}\dif s\nonumber\\
    \lesssim& \|\varphi\|_\bbq\left(n^{\vartheta(\bba\cdot \frac{d}{\bbp})}\int_0^{2t/n}(t-s)^{-\frac12-\bba\cdot\frac{d}{2\bbq}}\dif s+n^{-\frac12}\int_{0}^1 s^{-\frac12-\bba\cdot\frac{d}{2\bbp}}(t-s)^{-\frac12-\bba\cdot\frac{d}{2\bbq}}\dif s\right)\nonumber\\
    \lesssim& n^{-\frac12}t^{-\bba\cdot\frac{d}{2\bbq}-(\vartheta\vee \frac12)(\bba\cdot \frac{d}{\bbp})}\|\varphi\|_\bbq,
\end{align}
where we used \eqref{est:semi-P-L} in the second to last  inequality above and \eqref{0419:01} in the last inequality. Thus, by \eqref{eq:In3-S1S2}, \eqref{est:In3-S1} and \eqref{est:In3-S2} we have
\begin{align}\label{0419:02}
    I^n_3(t)\lesssim n^{-\frac12}t^{-\bba\cdot\frac{d}{2\bbq}-(\vartheta\vee \frac12)(\bba\cdot \frac{d}{\bbp})}\|\varphi\|_\bbq.
\end{align}

    {\textbf{(Step 2):}} For $I^n_1(t)$, we use  paraproduct. For any $s\in(0,T]$, we let $\varphi_b:=(b_n(s)-b(s))\cdot\nabla_v P_{t-s}\varphi$. Then by It\^o's formula to $r\to P_{s-r}\varphi_b(Z^n_r)$, we have
\begin{align*}
    \mE\varphi_b(Z^n_s)=\mE P_s\varphi_b(Z_0)+\mE\int_0^s \Gamma_{r-k_n(r)}b(r,Z^n_{k_n(r)})\cdot \nabla_v P_{s-r}\varphi_b(Z^n_r)\dif r,
\end{align*}
which by \eqref{0418:01} implies that
\begin{align*}
    |\mE\varphi_b(Z^n_s)|\lesssim \|P_s\varphi_b\|_\infty+ \int_0^s r^{-\bba\cdot \frac{d}{\bbp}}\|\nabla_v P_{s-r}\varphi_b\|_{\bbp}\dif r.
\end{align*}
That is to say,
\begin{align}
\label{est:Ephib} I_1^n(t)\leq \int_0^t\|P_s\varphi_b\|_\infty\dif s+ \int_0^t\int_0^s r^{-\bba\cdot \frac{d}{\bbp}}\|\nabla_v P_{s-r}\varphi_b\|_{\bbp}\dif r\dif s.
\end{align}
Recalling the Bony decomposition (\cite{HZZZ22}, see also \eqref{def:Bony})
\begin{align*}
    fg=f\prec g+f\circ g+f\succ g=:f\prec g+f\succcurlyeq g,
\end{align*}
based on \cref{lem:A2}, we have for any $\alpha>0$, from \eqref{0426:09}
\begin{align*}
    \|f\prec g\|_{\bB^{-\alpha}_{\bbr;\bba}}\lesssim \|g\|_{\bB^{-\alpha}_{\bbp;\bba}}\|f\|_{\bbq}
\end{align*}
and from \eqref{0426:08} together with \eqref{0426:07}
\begin{align*}
    \|f\succcurlyeq g\|_{\bB^{0}_{\bbr;\bba}}\lesssim \|f\|_{\bB^{\alpha,1}_{\bbp;\bba}}\|g\|_{\bB^{-\alpha}_{\bbq;\bba}}.
\end{align*}
Thus, due to $\varphi_b:=(b_n(s)-b(s))\cdot\nabla_v P_{t-s}\varphi$, by \cref{lem:f-fn} \eqref{est:f-fn-app} 
\begin{align*}
    &\|P_s\varphi_b\|_\infty\le\|P_s(\nabla_v P_{t-s}\varphi\prec(b_n-b))(s)\|_\infty+\|P_s(\nabla_v P_{t-s}\varphi\succcurlyeq(b_n-b))(s)\|_\infty\nonumber\\
    \lesssim& s^{-\frac12-\bba\cdot\frac{d}{2\bbp}}\|(\nabla_v P_{t-s}\varphi\prec(b_n-b))(s)\|_{\bB^{-1}_{\bbp;\bba}}+s^{-\bba\cdot\frac{d}{2\bbr}}\|(\nabla_v P_{t-s}\varphi\succcurlyeq(b_n-b))(s)\|_{\bB^0_{\bbr;\bba}}\nonumber\\
    \lesssim& s^{-\frac12-\bba\cdot\frac{d}{2\bbp}}\|b_n-b\|_{\bB^{-1}_{\bbp;\bba}}\|\nabla_v P_{t-s}\varphi\|_\infty\no\\
    &+s^{-\bba\cdot\frac{d}{2\bbr}}\left([\|\nabla_v P_{t-s}\varphi\|_{\bB^{2,1}_{\bbq;\bba}}\|b_n-b\|_{\bB^{-2}_{\bbp;\bba}}]\wedge[\|\nabla_v P_{t-s}\varphi\|_{\bbq}\|b_n-b\|_{\bbp}] \right)\nonumber\\
    \lesssim& \|\varphi\|_{\bbq}\Big(n^{-\vartheta}s^{-\frac12-\bba\cdot\frac{d}{2\bbp}}(t-s)^{-\frac12-\bba\cdot\frac{d}{2q}}+s^{-\bba\cdot\frac{d}{2\bbr}}[((t-s)^{-\frac{3}{2}}n^{-2\vartheta})\wedge (t-s)^{-\frac12}]\Big),\no
\end{align*}
then \cref{lem:A1} implies that
\begin{align}\label{est:Ephib1}
    \int_0^t \|P_s\varphi_b\|_\infty\dif s\lesssim& \|\varphi\|_{\bbq}\Big(n^{-\vartheta}\int_0^ts^{-\frac12-\bba\cdot\frac{d}{2\bbp}}(t-s)^{-\frac12-\bba\cdot\frac{d}{2q}}\dif s\no\\
    &\qquad\qquad+\int_0^t s^{-\bba\cdot\frac{d}{2\bbr}}(t-s)^{-\frac12}[((t-s)^{-1}n^{-2\vartheta})\wedge1]\dif s \Big)\no\\
    \lesssim &\|\varphi\|_{\bbq}n^{-\vartheta}t^{-\bba\cdot\frac{d}{2\bbr}}.
\end{align}
Moreover, by from \cref{lem:est-Pt-itself} \eqref{est:Pt-itself-2} and \cref{lem:f-fn} \eqref{est:f-fn-app} 
\begin{align*}
    &\|\nabla_v P_{s-r}\varphi_b\|_{\bbp}\\
    \lesssim& \left[(s-r)^{-\frac{3}{2}}\|(\nabla_v P_{t-s}\varphi\prec(b_n-b))(s)\|_{\bB^{-2}_{\bbp;\bba}}\right]\wedge \left[(s-r)^{\frac12}\|(\nabla_v P_{t-s}\varphi\prec(b_n-b))(s)\|_{\bbp}\right]\nonumber\\
    &+(s-r)^{-\frac12-\bba\cdot\frac{d}{2\bbq}}\|(\nabla_v P_{t-s}\varphi\succcurlyeq(b_n-b))(s)\|_{\bB^0_{\bbr;\bba}}\nonumber\\
    \lesssim& (s-r)^{-\frac12}\left(\left[(s-r)^{-1}\|\nabla_v P_{t-s}\varphi\|_{\infty}\|b_n-b\|_{\bB^{-2}_{\bbp;\bba}}\right]\wedge \left[\|\nabla_v P_{t-s}\varphi\|_{\infty}\|b_n-b\|_\bbp\right]\right)\\
    &+(s-r)^{-\frac12-\bba\cdot\frac{d}{2\bbq}}\left(\left[\|\nabla_v P_{t-s}\varphi\|_{\bB^{2,1}_{\bbq;\bba}}\|b_n-b\|_{\bB^{-2}_{\bbp;\bba}}\right]\wedge \left[\|\nabla_v P_{t-s}\varphi\|_{\bbq}\|b_n-b\|_{\bbp}\right]\right)\nonumber\\
    \lesssim&(s-r)^{-\frac12}(t-s)^{-\frac12-\bba\cdot\frac{d}{2\bbq}}\|\varphi\|_\bbq\left[((s-r)^{-1}n^{-2\vartheta})\wedge1\right]\\
    &+(s-r)^{-\frac12-\bba\cdot\frac{d}{2\bbq}}(t-s)^{-\frac12}\|\varphi\|_\bbq\left[((t-s)^{-1}n^{-2\vartheta})\wedge1\right],
\end{align*}
which by \cref{lem:A1} implies that
\begin{align}\label{est:Ephib2}
    &\int_0^t\int_0^sr^{-\bba\cdot \frac{d}{\bbp}}\|\nabla_v P_{s-r}\varphi_b\|_{\bbp}\dif r\dif s\no\\
    \lesssim& \|\varphi\|_\bbq\Big(\int_0^t\int_0^sr^{-\bba\cdot \frac{d}{\bbp}}(s-r)^{-\frac12}(t-s)^{-\frac12-\bba\cdot\frac{d}{2\bbq}}\left[((s-r)^{-1}n^{-2\vartheta})\wedge1\right] \dif r\dif s\no\\
    &+\int_0^t\int_0^sr^{-\bba\cdot \frac{d}{\bbp}}(s-r)^{-\frac12-\bba\cdot\frac{d}{2\bbq}}(t-s)^{-\frac12}\left[((t-s)^{-1}n^{-2\vartheta})\wedge1\right]\dif r\dif s\Big)\no\\
    \lesssim& \|\varphi\|_\bbq\Big(n^{-\vartheta}\int_0^ts^{-\bba\cdot\frac{d}{\bbp}}(t-s)^{-\frac12-\bba\cdot\frac{d}{2\bbq}}\dif s\no\\
    &+\int_0^ts^{\frac12-\bba\cdot \frac{d}{\bbp}-\bba\cdot \frac{d}{2\bbq}}(t-s)^{-\frac12}\left[((t-s)^{-1}n^{-2\vartheta})\wedge1\right]\dif s\Big)\no\\\
    \lesssim& \|\varphi\|_\bbq n^{-\vartheta} t^{\frac12-\bba\cdot \frac{d}{\bbp}-\bba\cdot \frac{d}{2\bbq}}.
\end{align}
Combining all the calculations above of \eqref{est:Ephib} with \eqref{est:Ephib1} and \eqref{est:Ephib2}, we have
    \begin{align}\label{0419:04}
    |I_1^n(t)|\lesssim n^{-\vartheta}t^{-\frac12(\bba\cdot\frac{d}{\bbp}+\bba\cdot\frac{d}{\bbq})}\|\varphi\|_\bbq.
\end{align}

{\textbf{(Step 3):}} By definition, it is easy to see that by  \cref{lem:est-Pt-itself} \eqref{est:Pt-itself-2}
\begin{align}
    I^n_2(t)&=\Big|\int_0^t \<b_n(r)\cdot\nabla_vP_{t-r}\varphi,\rho_r-\rho^n_r\>\dif r\Big|\le \|b_n\|_{\bbp}\int_0^t \|\nabla_vP_{t-r}\varphi\|_\infty\|\rho_r-\rho^n_r\|_{\bbp'}\dif r\no\\
    &\lesssim \|\varphi\|_{\bbq}\int_0^t (t-r)^{-\frac{1}{2}-\bba\cdot\frac{d}{2\bbq}}\|\rho_r-\rho^n_r\|_{\bbp'}\dif r.\label{0419:05}
\end{align}
Now, taking $\bbq=\bbp$ in \eqref{0419:02} and \eqref{0419:04}, based on \eqref{0419:00} together with \eqref{0419:05}, by taking supremum of $\varphi$ with $\|\varphi\|_\bbp=1$, we have
\begin{align*}
    \|\rho_t-\rho_t^n\|_{\bbp'}\lesssim n^{-\vartheta}t^{-\bba\cdot\frac{d}{\bbp}}+\int_0^t (t-r)^{-\frac{1}{2}-\bba\cdot\frac{d}{2\bbp}}\|\rho_r-\rho^n_r\|_{\bbp'}\dif r+ n^{-\frac12}t^{-[(\vartheta\vee \frac12)+\frac12](\bba\cdot \frac{d}{\bbp})}.
\end{align*}
Since $\vartheta(\bba\cdot d/\bbp)\le1/2$, we have
\begin{align*}
    \int_0^t (t-r)^{-\frac{1}{2}-\bba\cdot\frac{d}{2\bbp}}r^{-[(\vartheta\vee \frac12)+\frac12](\bba\cdot \frac{d}{\bbp})}\dif r\lesssim \int_0^t (t-r)^{-\frac{1}{2}-\bba\cdot\frac{d}{2\bbp}}r^{-\frac12-\bba\cdot \frac{d}{2\bbp}}\dif r\lesssim t^{-\bba\cdot \frac{d}{\bbp}},
\end{align*}
which by Gr\"onwall's inequality of Volterra's type (see \cite[Lemma 2.2]{Zhang10} and \cite[Lemma A.4]{Ha23}) implies that
\begin{align}\label{0419:06}
    \|\rho_t-\rho_t^n\|_{\bbp'}\lesssim n^{-(\vartheta\wedge \frac{1}{2})}t^{-\bba\cdot\frac{d}{\bbp}}+n^{-\frac12}t^{-\bba\cdot\frac{d}{2\bbp}-(\vartheta\vee \frac12)(\bba\cdot \frac{d}{\bbp})}.
\end{align}
Moreover, in view of \eqref{0419:00}, \eqref{0419:02}, \eqref{0419:04} and \eqref{0419:05}, by \eqref{0419:06}, for any $\bbq\ge\bbp$ and $\vartheta=1/2<1/(2\bba\cdot d/\bbp)$, we have
\begin{align*}
    \|\rho_t-\rho^n_t\|_{\bbq'}\lesssim& n^{-\frac12}t^{-\frac12(\bba\cdot\frac{d}{\bbp}+\bba\cdot\frac{d}{\bbq})}+\int_0^t (t-r)^{-\frac{1}{2}-\bba\cdot\frac{d}{2\bbq}}\|\rho_r-\rho^n_r\|_{\bbp'}\dif r\\
    \lesssim& n^{-\frac12}t^{-\frac12(\bba\cdot\frac{d}{\bbp}+\bba\cdot\frac{d}{\bbq})}+n^{-\frac12}\int_0^t (t-r)^{-\frac{1}{2}-\bba\cdot\frac{d}{2\bbq}}r^{-\bba\cdot \frac{d}{\bbp}}\dif r\\
    \lesssim& n^{-\frac12}t^{-\frac12(\bba\cdot\frac{d}{\bbp}+\bba\cdot\frac{d}{\bbq})}
\end{align*}
and complete the proof.
\end{proof}


\subsection{Strong convergence}
Here is the main result on the strong convergence of the solution $Z$ to kinetic equation \eqref{eq:SDE} and its EM-scheme  $Z^n$ from \eqref{eq:SDE-EM}.
    \bt\label{thm-strong} Assume \cref{ass:main1}
     holds. For $\epsilon\in(0,\frac{1}{2})$ being small and $\vartheta<\frac{1}{2}(\bba\cdot\frac{d}{\bbp})^{-1}$, there exists positive constant  $C=C(\bbp,d,\beta,\epsilon,m, \vartheta, \|b\|_{_{L^\infty_T(\bB^{2/3,\beta}_{\bbp;x,\bba})}})$ so that
 for any $m<p:=\min(p_x,p_v)$
     \begin{align}
    \label{est:thm-strong-S}
   \big\|\sup_{t\in[0,1]}|Z_t-Z_t^n|\big\|_{L^m(\Omega)}\leq C (n^{-\frac{1+\beta/3}{2}+\epsilon}+n^{-\vartheta(\beta+1-\bba\cdot\frac{d}{\bbp})+\epsilon}+\|Z_0-Z_0^n\|_{L^m_\omega}).
\end{align}

\et
Before giving the proof of the theorem above, we recall and introduce several auxiliary functions and estimates that used inside the proof. From now on we always assume \cref{ass:main1}
     holds.

 Let $U^i, i=1,\ldots, d$ be the solution to \eqref{LKE}  with taking $b=b_n, f=b_n^i$ and  sufficiently large $\lambda$, i.e.
 \begin{align*}
\p_t u=\Delta_v u+v\cdot\nabla_x u-\lambda u+b_n\cdot\nabla_v u+b_n^i,\quad u_0=0.
\end{align*}
Recall from \cref{thm41} (ii) \eqref{FA001} (precisely \eqref{EST:U-1},\eqref{EST:U-2} and \eqref{EST:U-3}) that for $U=(u^i)_{1\leq i\leq d},$  there exists $\lambda_0>0$ such that for every $\lambda\ge \lambda_0$, $\nabla_v U$ is bounded and H\"older continuous on $[0,1]\times\Rd$, for any $\theta\in[0,2]$
\begin{align}\label{tmp.2709regu}
	\sup_n\|U\|_{L_T^{\infty}(\bB^{2/3,\beta+\theta}_{\bbp;x,\bba})}<\infty
	\tand
	\sup_n\sup_{(t,x)\in[0,1]\times\Rd}|\nabla_v U(t,x)|=o_\lambda(1),
\end{align}
where $o_\lambda(1)$ denotes any constant such that $\lim_{\lambda\to\infty}o_\lambda(1)=0$; besides we have
\begin{align}
    \label{est:naxnavU}
   \nabla\nabla_vU \text{ is continuous on }[0,1]\times\mR^d,\quad \sup_n\|\nabla\nabla_vU\|_{L_T^{\infty}(\mL^\bbp)}<\infty.
\end{align}

Let $\mathcal{M}$ be   the Hardy-Littlewood maximal function from \eqref{def:M}. Define
\begin{align}\label{def.Ant}
	A_t^n:=& \int_0^t \left[ \cmm|\nabla\nabla_v U|(s,Z_s)+\cmm|\nabla\nabla_v U|(s,Z^n_s)\right]^2 \dif s,\\
 H^n(m):=&\Big\|\sup_{t\in[0,1]}\big|  \int_0^tb(r,Z_r)\dif r-  \int_0^tb^n(r,Z_r)\dif r\big|\Big\|_{L^{m}(\Omega)}
 \nonumber\\&+ \Big\|\sup_{t\in[0,1]}\big|  \int_0^t\big(\nabla_vU(r,Z_r^n)+\mathbf{I}\big)\big(b(r,Z_r^n)-\Gamma_{r-k_n(r)}b(r,Z_{k_n(r)}^n)\big)\dif r\big|\Big\|_{L^{m}(\Omega)},\label{def.Hnt}
\end{align}
for $m\in\mN$.

We first give the following rather general stability estimates for  equation \eqref{eq:SDE}
 and its approximation \eqref{eq:SDE-EM}.
 \begin{proposition}
    For any $m\geq1$, for any $\gamma>1$ so that $\gamma m<p:=\min(p_x,p_v)$, there exists a constant $C=C(\bbp,d,\beta,\epsilon,m, \vartheta, \|b\|_{L^\infty_T(\bB^{2/3,\beta}_{\bbp;x,\bba})})$ so that
    \begin{align}
        \label{est:Z-Zn-imp}
     \big \|\sup_{t\in[0,1]}|Z_t-Z_t^n|\big\|_{L^m(\Omega)}\leq C\big(\|Z_0-Z_0^n\|_{L^m(\Omega)}   +H^n(\gamma m)\big)
    \end{align}
    where $H^n$ is from \eqref{def.Hnt}.
    \end{proposition}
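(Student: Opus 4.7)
My plan is to run a Zvonkin-transformation argument modelled on the strong-stability proof in \cref{thmW}, but with the auxiliary PDE built from the smooth mollification $b_n$ (so that classical It\^o applies cleanly to the Euler scheme $Z^n$), and then to collect the resulting drift mismatches into terms matching $H^n$. Concretely, I would let $U=(U^i)_{i=1}^d$ solve the backward kinetic PDE
\begin{align*}
\p_t U + (\Delta_v + v\cdot\nabla_x - \lambda)U + b_n\cdot\nabla_v U + b_n = 0,\qquad U(T)=0,
\end{align*}
with $\lambda$ large enough that \eqref{tmp.2709regu} gives $\|\nabla_v U\|_\infty \le 1/2$ uniformly in $n$; then $\Phi(t,z):=(x,v+U(t,z))$ is bi-Lipschitz in $z$ with $|z-z'|\le 2|\Phi(t,z)-\Phi(t,z')|$ exactly as in \eqref{Am1}.

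Next I would apply It\^o's formula to $\Phi(t,Z_t)$ via \cref{lem44} (valid thanks to the regularity of $U$ from \eqref{tmp.2709regu} and the Krylov bound \eqref{0320:03} for $Z$) and to $\Phi(t,Z^n_t)$ via classical It\^o (since $b_n$ is smooth). Combined with the PDE this yields
\begin{align*}
\Phi(t,Z_t) &= \Phi(0,Z_0) + \int_0^t\bigl(V_s,\lambda U(s,Z_s)\bigr)\dif s + \cE_t + \int_0^t \nabla_v\Phi(s,Z_s)\dif W_s,\\
\Phi(t,Z^n_t) &= \Phi(0,Z^n_0) + \int_0^t\bigl(V^n_s,\lambda U(s,Z^n_s)\bigr)\dif s + \cE^n_t + \int_0^t \nabla_v\Phi(s,Z^n_s)\dif W_s,
\end{align*}
with remainders
\begin{align*}
\cE_t &:= \int_0^t (b-b_n)(s,Z_s)(I+\nabla_v U(s,Z_s))\dif s,\\
\cE^n_t &:= \int_0^t \bigl(\Gamma_{s-k_n(s)}b_n(s,Z^n_{k_n(s)})-b_n(s,Z^n_s)\bigr)(I+\nabla_v U(s,Z^n_s))\dif s.
\end{align*}
Subtracting, taking $L^m$-norms, applying BDG to the martingale difference, and using the Lipschitz bound $|U(s,Z_s)-U(s,Z^n_s)|\le |Z_s-Z^n_s|$ together with the Hardy--Littlewood estimate $|\nabla_v U(s,Z_s)-\nabla_v U(s,Z^n_s)|\lesssim |Z_s-Z^n_s|\bigl(\cmm|\nabla\nabla_v U|(s,Z_s)+\cmm|\nabla\nabla_v U|(s,Z^n_s)\bigr)$, one arrives at
\begin{align*}
\mE\sup_{s\le t}|Z_s-Z^n_s|^m \lesssim \mE|Z_0-Z^n_0|^m + \mE\int_0^t |Z_s-Z^n_s|^m\dif A^n_s + \mE\sup_{s\le t}|\cE_s-\cE^n_s|^m.
\end{align*}
Since $\cmm|\nabla\nabla_v U|\in L^\infty_T(\mL^\bbp)$ by \eqref{est:naxnavU} and the boundedness of $\cmm$ on $\mL^\bbp$, the John--Nirenberg inequality (\cite{Le2022}) yields $\mE\exp(\kappa A^n_1)<\infty$ for all $\kappa>0$. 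Stochastic Gr\"onwall (cf.\ \cite[Proof of Lemma 2.3]{GL}) then produces, for any $\gamma>1$ with $\gamma m<p$,
\begin{align*}
\|\sup_{t\le 1}|Z_t-Z^n_t|\|_{L^m(\Omega)} \lesssim \|Z_0-Z^n_0\|_{L^m(\Omega)} + \|\sup_{t\le 1}|\cE_t|\|_{L^{\gamma m}(\Omega)} + \|\sup_{t\le 1}|\cE^n_t|\|_{L^{\gamma m}(\Omega)}.
\end{align*}

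The remaining -- and most delicate -- step is to identify the two suprema above with $H^n(\gamma m)$. For $\cE^n$, I would decompose $\Gamma_{s-k_n(s)}b_n-b_n = (\Gamma_{s-k_n(s)}b-b) + [\Gamma_{s-k_n(s)}(b_n-b)-(b_n-b)]$; the first summand, multiplied by the bounded factor $(I+\nabla_v U)$, reproduces (up to sign) the second term of $H^n(\gamma m)$, while the second summand, involving only the smooth increment $b_n-b$, is dominated by the first term of $H^n$ via a Girsanov transport from the $Z^n$-law to the $Z$-law (with Radon--Nikodym derivative controlled as in \cref{cor:final}). For $\cE_t$, the identity part of $(I+\nabla_v U)$ directly produces the first term of $H^n(\gamma m)$; the cross term $\int_0^t (b-b_n)(s,Z_s)\nabla_v U(s,Z_s)\dif s$ can be absorbed by choosing $\lambda$ large so that the smallness $\|\nabla_v U\|_\infty = o_\lambda(1)$ from \eqref{tmp.2709regu} moves the contribution to the left-hand side of the Gr\"onwall step. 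This cross-term handling, which hinges on the sharp mixed-Besov regularity of $U$ established in \eqref{tmp.2709regu}--\eqref{est:naxnavU}, is the main technical obstacle.
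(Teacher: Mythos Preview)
Your Zvonkin-plus-Gr\"onwall route matches the paper's; the paper carries out the Gr\"onwall step via an explicit stopping-time partition and pathwise BDG (see \eqref{def:stop}--\eqref{est:dif}) rather than invoking an abstract stochastic Gr\"onwall lemma, but the content is the same.

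The final identification step contains a real error. The cross term $\int_0^t (b-b_n)(s,Z_s)\nabla_v U(s,Z_s)\dif s$ does not involve $Z-Z^n$, so it cannot be ``moved to the left-hand side of the Gr\"onwall step'' regardless of how small $\|\nabla_v U\|_\infty$ is; that absorption trick applies only to terms proportional to the unknown. Your Girsanov transport of the $(b_n-b)$ piece in $\cE^n$ is likewise wrong: Girsanov sends the law of $Z^n$ to that of the driftless process $M$, not to $Z$, and the resulting object is a discretisation error in $b_n-b$ rather than $\int_0^t(b-b_n)(r,Z_r)\dif r$. The correct resolution is that no reconciliation is needed: the remainders emerging from the Zvonkin substitution are exactly $\int_0^t(I+\nabla_v U(s,Z_s))(b-b_n)(s,Z_s)\dif s$ and $\int_0^t(I+\nabla_v U(s,Z^n_s))\bigl(b_n(s,Z^n_s)-\Gamma_{s-k_n(s)}b_n(s,Z^n_{k_n(s)})\bigr)\dif s$, and the paper's own derivation \eqref{eq:b}--\eqref{est:Z-Zn} produces the same terms (the stated $H^n$ simply drops the bounded factor $I+\nabla_v U$ on the first and writes $b$ for $b_n$ in the second, both immaterial for the downstream bounds via \eqref{0321:02} and \cref{cor:final}). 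Carry your $\cE_t,\cE^n_t$ forward as they stand.
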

\begin{proof}
    By It\^o's formula we have
\begin{align}\label{eq:b}
    \int_s^tb^n(r,Z_r)\dif r=&U(s,Z_s)-U(t,Z_t)+\int_s^t\nabla_vU(r,Z_r)\dif W_r+\lambda\int_s^t U(r,Z_r)\dif r
    \nonumber\\&+\int_s^t\nabla_vU(r,Z_r)\big(b(r,Z_r)-b_n(r,Z_r)\big)\dif r,
\end{align}
and
\begin{align}\label{eq:bn}
    \int_s^tb^n(r,Z_r^n)=&U(s,Z_s^n)-U(t,Z_t^n)+\int_s^t\nabla_vU(r,Z_r^n)\dif W_r
    \nonumber\\&+
    \int_s^t\nabla_vU(r,Z_r^n)\big(\Gamma_{r-k_n(r)}b^n(r,Z_{k_n(r)}^n)-b^n(r,Z_r^n)\big)\dif r+\lambda\int_s^t U(r,Z_r^n)\dif r.
\end{align}
Plug \eqref{eq:b} and \eqref{eq:bn} into the difference of $Z-Z^n$, for any choice of $s,h\in[0,1]$ so that $(s,s+h)\in[0,1]_\leq^2$,  we have for any $m\geq2$
\begin{align}\label{est:Z-Zn}
    & \sup_{t\in [s,s+h]}|Z_t-Z_t^n|^m
    \nonumber\\\lesssim& \sup_{t\in [s,s+h]}\big|\int_s^tb(r,Z_r)\dif r-  \int_s^t\Gamma_{r-k_n(r)}b^n(r,Z_{k_n(r)}^n)\dif r\big|^m+ \sup_{t\in [s,s+h]}\big|\int_s^tV_r-V_r^n\dif r\big|^m \nonumber\\\lesssim &  \sup_{t\in [s,s+h]}\big|\int_s^tb(r,Z_r)\dif r-  \int_s^tb^n(r,Z_r)\dif r\big|^m+ \sup_{t\in [s,s+h]}\big|\int_s^tb^n(r,Z_r)\dif r-  \int_s^tb^n(r,Z_r^n)\dif r\big|^m
    \nonumber \\& + \sup_{t\in [s,s+h]}\big|\int_s^tb^n(r,Z_r^n)\dif r-  \int_s^t\Gamma_{r-k_n(r)}b^n(r,Z_{k_n(r)}^n)\dif r\big|^m+ \sup_{t\in [s,s+h]}\big|\int_s^tZ_r-Z_r^n\dif r\big|^m
     \nonumber\\\lesssim &  \sup_{t\in [s,s+h]}\big|U(t,Z_t^n)-U(t,Z_t)\big|^m+\big|U(s,Z_s^n)-U(s,Z_s)\big|^m+ \sup_{t\in [s,s+h]}\big|\int_s^tZ_r-Z_r^n\dif r\big|^m
     \nonumber\\&+ \sup_{t\in [s,s+h]}\lambda^m\Big|\int_s^t U(r,Z_r)\dif r-\int_s^t U(r,Z_r^n)\dif r\Big|^m
   \nonumber  \\&+ \sup_{t\in [s,s+h]} \big|\int_s^tb(r,Z_r)\dif r-  \int_0^tb^n(r,Z_r)\dif r\big|^m
 \nonumber\\&+ \sup_{t\in [s,s+h]}\big|\int_s^t\big(\nabla_vU(r,Z_r^n)+\mathbf{I}\big)\big(b(r,Z_r^n)-\Gamma_{r-k_n(r)}b(r,Z_{k_n(r)}^n)\big)\dif r\big|^m \nonumber\\&+ \sup_{t\in [s,s+h]}\big|\int_s^t\big(\nabla_vU(r,Z_r)-\nabla_vU(r,Z_r^n)\big)\dif W_r\big|^m.
\end{align}
Recall $A^n$ from \eqref{def.Ant}, for a constant $\tilde K>0$ to be chosen later, we  set an increasing sequence of stopping times by $\tau^0=0$ and
\begin{align}\label{def:stop}
\tau^{n,l+1}=\inf\big\{t\geq \tau^{n,l}: t\leq 1,\ A_t^n-A_{\tau^{n,l}}^n+\lambda^2\|\nabla U\|_\infty^2+\|\nabla U\|_\infty^2+t- \tau^{n,l}\geq  (4\tilde K)^{-\frac{2}{m}}\big\},
\end{align}
with the convention that $\tau^{n,l+1}=1$ if there is no $t\in [\tau^{n,l},1]$ such that $A_t^n-A_{\tau^ {n,l}}^n=(4\tilde K)^{-2/m}$.
Following from  Doob's optimal stopping theorem, we have
\begin{align*}
 \sM^{n,l}_t:= \int_{\tau^{n,l}}^{\tau^{n,l}+t}\big(\nabla_vU(r,Z_r)-\nabla_vU(r,Z_r^n)\big)\dif W_r
\end{align*}
is a continuous martingale w.r.t. to the filtration $\mathcal{G}_t=\mathcal{F}_{\tau^{n,l}+t}$. So by pathwise BDG’s inequality there exists a martingale $\hat\sM^{n,l}$ so that $a.s.$
\begin{align*}
    \sup_{t\in [0,h]}  \big|  \sM^{n,l}_t\big|^m\leq \hat\sM^{n,l}_h+[\sM^{n,l}]_h^{m/2},
\end{align*}
where $[\sM^{n,l}]$ denotes the quadratic variation of $\sM^{n,l}$;
further by elementary calculus  and the continuity of $\nabla \nabla_vu$ from \eqref{est:naxnavU} we have for any $z_1,z_2\in\mR^{2d}$
\begin{align*}
 \big| \nabla_vU(r,z_1)-\nabla_vU(r,z_2)\big|\lesssim \big(\big|\mathcal{M}(\nabla \nabla_vU)(r,z_1)\big|+\big|\mathcal{M}(\nabla \nabla_vU)(r,z_1)\big|\big)|z_1-z_2|
\end{align*}
where $\mathcal{M}$ denotes the Hardy-Littlewood maximal function from \eqref{def:M}. It yields
$a.s.$
\begin{align*}
[\sM^{n,l}]_h^{m/2}\leq& \big(\int_{\tau^{n,l}}^{\tau^{n,l}+h}\big|\big(\mathcal{M}(\nabla \nabla_vU)\big(Z_r)+\mathcal{M}(\nabla \nabla_vU)(Z_r^n)\big)\cdot(Z_r-Z_r^n)\big|^2\dif r\big)^{m/2}
\\=&\big(\int_{\tau^{n,l}}^{\tau^{n,l}+h}\big|Z_r-Z_r^n\big|^2\dif A_r^n\big)^{m/2},
\end{align*}
therefore the following pathwise inequality holds for the choice of $s:=\tau^{n,l}, h:=\tau^{n,l+1}-\tau^{n,l}$
\begin{align}\label{est:dif}
  \sup_{t\in [\tau^{n,l},\tau^{n,l+1}]}& |Z_t-Z_t^n|^m\nonumber\\\lesssim   &  
  \big( \sup_{t\in [\tau^{n,l},\tau^{n,l+1}]} |Z_t-Z_t^n|^m+|Z_{\tau^{n,l}}-Z_{\tau^{n,l}}^n|^m\big)\|\nabla U\|_\infty^m
    \nonumber\\&+\lambda^m \sup_{t\in [\tau^{n,l},\tau^{n,l+1}]} |Z_t-Z_t^n|^m\|\nabla U\|_\infty^m|\tau^{n,l+1}-\tau^{n,l}|^{\frac{m}{2}}
   \nonumber  \\&+  \sup_{t\in [\tau^{n,l},\tau^{n,l+1}]}\big|\int_0^tb(r,Z_r)\dif r-  \int_0^tb^n(r,Z_r)\dif r\big|^m
 \nonumber\\&+ \sup_{t\in [\tau^{n,l},\tau^{n,l+1}]}\big|\int_0^t\big(\nabla_vU(r,Z_r^n)+\mathbf{I}\big)\big(b(r,Z_r^n)-\Gamma_{r-k_n(r)}b(r,Z_{k_n(r)}^n)\big)\dif r\big|^m
\nonumber\\&+\hat\sM^{n,l}_{\tau^{n,l+1}}+\big(\int_{\tau^{n,l}}^{\tau^{n,l+1}}\big|Z_r-Z_r^n\big|^2\dif A_r^n\big)^{m/2}
\nonumber\\\leq   & 
|Z_{\tau^{n,l}}-Z_{\tau^{n,l}}^n|^m\|\nabla U\|_\infty^m+\hat\sM^{n,l}_{\tau^{n,l+1}}
   \nonumber  \\&+  \sup_{t\in [\tau^{n,l},\tau^{n,l+1}]}\big|\int_0^tb(r,Z_r)\dif r-  \int_0^tb^n(r,Z_r)\dif r\big|^m
 \nonumber\\&+ \sup_{t\in [\tau^{n,l},\tau^{n,l+1}]}\big|\int_0^t\big(\nabla_vU(r,Z_r^n)+\mathbf{I}\big)\big(b(r,Z_r^n)-\Gamma_{r-k_n(r)}b(r,Z_{k_n(r)}^n)\big)\dif r\big|^m
\nonumber\\&+\sup_{t\in [\tau^{n,l},\tau^{n,l+1}]} |Z_t-Z_t^n|^mC\Big(\big(A_{\tau^{n,l+1}}^n-A_{\tau^{n,l}}^n\big)^{m/2}+\|\nabla U\|_\infty^m+\lambda\|\nabla U\|_\infty^m|\tau^{n,l+1}-\tau^{n,l}|^{\frac{m}{2}}\big).
\end{align}
Taking $C=\tilde K$ as the desired constant in the definition of $\tau^n$ \eqref{def:stop}, we find
\begin{align*}
    \sup_{t\in [\tau^{n,l},\tau^{n,l+1}]}& |Z_t-Z_t^n|^m\nonumber\\\leq &\hat\sM^{n,l}_{\tau^{n,l+1}}+2  \tilde K \big(|Z_{\tau^{n,l}}-Z_{\tau^{n,l}}^n|^m+ \nonumber  \\&+  \sup_{t\in [\tau^{n,l},\tau^{n,l+1}]}\big|\int_0^tb(r,Z_r)\dif r-  \int_0^tb^n(r,Z_r)\dif r\big|^m
 \nonumber\\&+ \sup_{t\in [\tau^{n,l},\tau^{n,l+1}]}\big|\int_0^t\big(\nabla_vU(r,Z_r^n)+\mathbf{I}\big)\big(b(r,Z_r^n)-\Gamma_{r-k_n(r)}b(r,Z_{k_n(r)}^n)\big)\dif r\big|^m\big)
 \\=&:\hat\sM^{n,l}_{\tau^{n,l+1}}+2  \tilde K|Z_{\tau^{n,l}}-Z_{\tau^{n,l}}^n|^m+ 2  \tilde K \sV.
\end{align*}
Since $\hat\sM^{n,l}_t$ is a $\mathcal{G}_t$-martingale, taking expectation to both side of above gives us
\begin{align*}
    \mE  \sup_{t\in [\tau^{n,l},\tau^{n,l+1}]}& |Z_t-Z_t^n|^m\leq 2  \tilde K\mE|Z_{\tau^{n,l}}-Z_{\tau^{n,l}}^n|^m+ 2  \tilde K \mE\sV.
\end{align*}
Iteratively for an appropriately chosen constant $\kappa>0$ in function of $\tilde K$, we get
\begin{align*}
    \mE \big(e^{-l\kappa}  \sup_{t\in [\tau^{n,l},\tau^{n,l+1}]}  |Z_t-Z_t^n|^m\big)\leq  2  \tilde K\mE|Z_{0}-Z_{0}^n|^m+ 2  \tilde K \mE\sV.
\end{align*}
Let \begin{align*}
    B_t:=A_t^n+\lambda^2\|\nabla U\|_\infty^2+\|\nabla U\|_\infty^2t+t,\quad \nu:=(4 \tilde K)^{-2/m}, C:= 2\kappa/ \nu.
\end{align*}
Then we have
\begin{align*}
    \mE\Big[ e^{-C B_1} \sup_{t\in [0,1]} |Z_t-Z_t^n|^m \Big]
& = \sum_{l\in \mathbb{N}} \mE\Big[ e^{-C B_1} \sup_{t\in [0,1]} |Z_t-Z_t^n|^m \mathbbm{1}_{B_1\in [\nu l, \nu(l+1)]} \Big]\\
& \leq \sum_{l\in \mathbb{N}} \E \Big[ e^{-2\kappa l} \sup_{j=1,\ldots,l+1} \sup_{t\in [\tau^{n,l-1},\tau^{n,l}]} |Z_t-Z_t^n|^m \mathbbm{1}_{B_1\in [\nu l, \nu(l+1)]}\Big]\\
& \leq \sum_{l\in\mathbb{N}} e^{-\kappa l} \sum_{j=1}^{l+1} \E\Big[ e^{-\kappa l} \sup_{t\in [\tau^{n,l-1},\tau^{n,l}]} |Z_t-Z_t^n|^m\Big]\\
& \lesssim  |Z_0-Z_0^n|^m+\mE\sV.
\end{align*}
Notice that the process $B$ satisfies $\E[\exp(\lambda B)]<\infty$ for all $\lambda\in \R$; therefore for any $\delta\in (0,1)$ it holds
\begin{align*}
\mE\Big[\sup_{t\in [0,1]} |Z_t-Z_t^n|^{\delta m} \Big]
& \leq \mE\Big[e^{-C B_1} \sup_{t\in [0,1]} |Z_t-Z_t^n|^{m} \Big]^{\delta}\, \mE\Big[e^{C \frac{\delta}{1-\delta} B_1}\Big]^{1-\delta}\\
& \lesssim_\delta |Z_0-Z_0^n|^m +\E[\sV]^\delta.
\end{align*}
Recalling \eqref{def.Hnt} and taking the $(\delta m)^{-1}$-power on both sides and relabelling $\tilde{m}=m\delta$, $\gamma \tilde{m} =m$, overall yields the desired result.
\end{proof}
Finally we are ready to show the main result of this session.
\begin{proof}
    [Proof of \cref{thm-strong}]  First by \eqref{0321:02} and \cref{lem:f-fn} \eqref{est:f-fn-app},  we have 
    and for $\delta\in(-1+\bba\cdot(d/\bbp),0)$ so that $0<\beta-\delta\leq 1$
    \begin{align}
        \label{est:Kry-Z-Zn}
        \Big\|\sup_{t\in[0,1]}\big|  \int_0^tb(r,Z_r)\dif r-  \int_0^tb^n(r,Z_r)\dif r\big|\Big\|_{L^{m}(\Omega)} &\lesssim \|b-b_n\|_{L^\infty_T(\bB^{\delta}_{\bbp;\bba})}
       \nonumber\\&\lesssim\|b\|_{L^\infty_T(\bB^{\beta}_{\bbp;\bba})}n^{-\vartheta(\beta-\delta)}
\nonumber\\&\lesssim \|b\|_{\bB^{2/3,\beta}_{\bbp;x,\bba}}n^{-\vartheta(\beta-\delta)}.
    \end{align}
Secondly following from \eqref{est:naxnavU} we know that for $\epsilon\in(0,\frac{1}{2})$, there exists $\beta’\in(1-2\epsilon,1)$ so that $\nabla_vU\in \mathbb{C}_\bba^{\beta’}$. Now we apply \cref{cor:final} to estimate \eqref{def.Hnt}. We then together with \eqref{est:Kry-Z-Zn} and \eqref{AA05}  obtain that for $H^n$ defined from \eqref{def.Hnt}, for any $\gamma m< p:=\min(p_x,p_v)$ with $\gamma>1$
\begin{align*}
 H^n(\gamma m) \lesssim \|b-b_n\|_{L^\infty_T(\bB^{\delta}_{\bbp;\bba})}+n^{-\frac{1+\beta/3}{2}+\epsilon} \lesssim n^{-\vartheta(\beta+1-\bba\cdot\frac{d}{\bbp})+\epsilon}+n^{-\frac{1+\beta/3}{2}+\epsilon}.
\end{align*}
The proof completes after plugging the above estimate into \eqref{est:Z-Zn-imp}. 
\end{proof}

\appendix
\section{Several technical lemmas}\label{app}
First of all, we give the following elementary lemma similar as \cite[Lemma A.2]{HRZ}.
\begin{lemma}\label{lem:A1}
    Let $\alpha,\beta\in(0,1)$ and $\gamma\ge1$. There there is a constant $C=C(\alpha,\beta,\gamma)>0$ such that for all $t,\lambda>0$,
    \begin{align*}
        \int_0^t s^{-\alpha}(t-s)^{-\beta}\left([(\lambda(t-s))^{-\gamma}]\wedge 1\right)\le C \lambda^{-1+\beta}t^{-\alpha}.
    \end{align*}
\end{lemma}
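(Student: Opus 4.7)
\textbf{Proof plan for \cref{lem:A1}.} The integrand is a product of the Beta-type singularities $s^{-\alpha}(t-s)^{-\beta}$ and the truncation factor $[(\lambda(t-s))^{-\gamma}]\wedge 1$. The plan is to split the integral at $s=t/2$ and exploit the monotonicity of each factor on its respective half.

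On the first piece $s\in(0,t/2)$, both $(t-s)^{-\beta}$ and $[(\lambda(t-s))^{-\gamma}\wedge 1]$ are monotone in $s$ and attain their maximum at $s=t/2$, so I would pull them out and integrate $s^{-\alpha}$ explicitly to get a bound of the order $t^{1-\alpha-\beta}[(\lambda t)^{-\gamma}\wedge 1]$. To match this with the target $\lambda^{\beta-1}t^{-\alpha}$, it suffices to show $(\lambda t)^{1-\beta}[(\lambda t)^{-\gamma}\wedge 1]\le 1$, which reduces to two sub-cases: $\lambda t\le 1$ gives $(\lambda t)^{1-\beta}\le 1$ since $\beta<1$, while $\lambda t>1$ gives $(\lambda t)^{1-\beta-\gamma}\le 1$ since $\gamma\ge 1>1-\beta$ (using $\beta>0$). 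This is exactly where the standing assumption $\gamma\ge 1$ is used.

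On the second piece $s\in(t/2,t)$, I would bound $s^{-\alpha}\le(t/2)^{-\alpha}$ and substitute $r=t-s$ to reduce the estimate to controlling $\int_0^{t/2} r^{-\beta}[(\lambda r)^{-\gamma}\wedge 1]\dif r\lesssim \lambda^{\beta-1}$. If $t/2\le\lambda^{-1}$, the minimum equals $1$ throughout, and $\int_0^{t/2}r^{-\beta}\dif r\lesssim t^{1-\beta}\le\lambda^{\beta-1}$. Otherwise I would split at $r=\lambda^{-1}$: the small-$r$ contribution yields $\int_0^{1/\lambda} r^{-\beta}\dif r\lesssim\lambda^{\beta-1}$, and the large-$r$ contribution is $\lambda^{-\gamma}\int_{1/\lambda}^{t/2}r^{-\beta-\gamma}\dif r$, which is finite and of order $\lambda^{\beta-1}$ since $\beta+\gamma>1$.

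There is no genuine obstacle here; the argument is purely computational, and the only point requiring attention is to ensure that $\gamma\ge 1$ together with $\beta\in(0,1)$ is used consistently in both halves (to guarantee both $\beta+\gamma>1$ in the integral on the second piece and $1-\beta-\gamma\le 0$ in the pointwise bound on the first piece). Adding the two contributions gives the claimed bound $C\lambda^{\beta-1}t^{-\alpha}$, with $C$ depending only on $\alpha,\beta,\gamma$.
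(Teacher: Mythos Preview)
The proposal is correct and takes essentially the same approach as the paper: split the integral at the midpoint, bound the non-singular factor on each half, and use $\gamma\ge 1$ (hence $\beta+\gamma>1$) to control the remaining one-variable integral. The only cosmetic difference is that the paper first rescales via $I(t,\lambda)=t^{1-\alpha-\beta}I(1,t\lambda)$ to reduce to $t=1$ before splitting, whereas you work directly with general $t$; the resulting estimates on each half are identical.
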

\begin{proof}
    Based on a change of variable, we have
    \begin{align*}
        I(t,\lambda):=\int_0^t s^{-\alpha}(t-s)^{-\beta}\left([(\lambda(t-s))^{-\gamma}]\wedge 1\right)\dif s=t^{1-\alpha-\beta}I(1,t\lambda).
    \end{align*}
    Thus, we only estimate $I(1,\lambda)$. We have
    \begin{align*}
        I(1,\lambda)=&\left(\int_0^{1/2}+\int_{1/2}^1\right)s^{-\alpha}(1-s)^{-\beta}\left([(\lambda(1-s))^{-\gamma}]\wedge 1\right)\dif s\\
        \lesssim& [\lambda^{-\gamma}\wedge 1]\int_0^{1/2}s^{-\alpha}\dif s+\int_{1/2}^1(1-s)^{-\beta}\left([(\lambda(1-s))^{-\gamma}]\wedge 1\right)\dif s\\
        \lesssim& [\lambda^{-\gamma}\wedge 1]+\int_{0}^1s^{-\beta}\left([(\lambda s)^{-\gamma}]\wedge 1\right)\dif s\lesssim [\lambda^{-\gamma}\wedge 1]+\lambda^{-1+\beta}\lesssim\lambda^{-1+\beta}
    \end{align*}
    and complete the proof.
\end{proof}

Recall the Bony decomposition (\cite{HZZZ22})
\begin{align}\label{def:Bony}
    fg=f\prec g+f\circ g+f\succ g=:f\prec g+f\succcurlyeq g,
\end{align}
where
\begin{align*}
   f\prec g=g\succ f:=\sum_{k=1}^\infty S_{k-1}^\bba f\cR_j^\bba g,\quad S_k^\bba f:=\sum_{j=0}^k\cR_j^\bba f,
\end{align*}
and
\begin{align*}
   f\circ g:=\sum_{|i-j|\le2} \cR_i^\bba f\cR_j^\bba g.
\end{align*}
\begin{lemma}[Paraproduct estimates]\label{lem:A2}
    Let $\bbp,\bbq,\bbr\in[1,\infty]^2$ with $1/\bbp+1/\bbq=1/\bbr$ and $\alpha>0>\beta$. Then
\begin{align}\label{0426:09}
    \|f\prec g\|_{\bB^{-\alpha}_{\bbr;\bba}}\lesssim \|g\|_{\bB^{-\alpha}_{\bbp;\bba}}\|f\|_{\bbq}
\end{align}
and
\begin{align}\label{0426:08}
    \|f\succ g\|_{\bB^{\alpha+\beta}_{\bbr;\bba}}\lesssim \|f\|_{\bB^{\alpha}_{\bbp;\bba}}\|g\|_{\bB^{\beta}_{\bbq;\bba}}.
\end{align}
When $\alpha+\beta>0$,
\begin{align}\label{0426:10}
    \|f\circ g\|_{\bB^{\alpha+\beta}_{\bbr;\bba}}\lesssim \|f\|_{\bB^{\alpha}_{\bbp;\bba}}\|g\|_{\bB^{-\alpha}_{\bbq;\bba}}.
\end{align}
Moreover,
\begin{align}\label{0426:07}
    \|f\circ g\|_{\bbr}\lesssim \|f\|_{\bB^{\alpha,1}_{\bbp;\bba}}\|g\|_{\bB^{-\alpha}_{\bbq;\bba}}.
\end{align}
\end{lemma}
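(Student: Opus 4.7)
The four inequalities are the anisotropic counterparts of the classical Bony paraproduct estimates, so my plan is to adapt the standard spectral-localization argument to the dyadic blocks $\cR_j^\bba$, relying on the anisotropic Bernstein inequality \eqref{Ber}, the definition of $\bB^{s,q}_{\bbp;\bba}$, and the support identities \eqref{Cx8}--\eqref{KJ2}. The scheme is identical for all four bounds: decompose the product into its dyadic pieces, locate the Fourier support of each summand, apply H\"older in $\mL^\bbp$ to split the product across $f$ and $g$, and sum geometrically in the frequency index.

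For the off-diagonal paraproducts \eqref{0426:09} and \eqref{0426:08} the crucial observation is that each summand $S^\bba_{k-1}f\cdot \cR^\bba_k g$ has Fourier support in an anisotropic annulus of size $\sim 2^k$, since the ball of radius $\sim 2^{k-1}$ carrying $S^\bba_{k-1}f$ lies strictly inside the annular support of $\cR^\bba_k g$. Consequently $\cR^\bba_j(f\prec g)$ vanishes unless $|j-k|\le N$ for some absolute $N$. H\"older then gives $\|\cR^\bba_j(f\prec g)\|_\bbr\lesssim \|f\|_\bbq\sum_{k:|j-k|\le N}\|\cR^\bba_k g\|_\bbp$, and multiplying by $2^{-j\alpha}$ and taking the supremum in $j$ yields \eqref{0426:09}. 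The proof of \eqref{0426:08} swaps the roles of $f$ and $g$: one combines $\|\cR^\bba_k f\|_\bbp\lesssim 2^{-k\alpha}\|f\|_{\bB^\alpha_{\bbp;\bba}}$ with $\|S^\bba_{k-1}g\|_\bbq\lesssim 2^{-k\beta}\|g\|_{\bB^\beta_{\bbq;\bba}}$, the latter geometric sum converging precisely because $\beta<0$.

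For the resonant contributions \eqref{0426:10} and \eqref{0426:07}, each summand $\cR^\bba_i f\cdot \cR^\bba_j g$ with $|i-j|\le 2$ has Fourier support only in a \emph{ball} of radius $\sim 2^j$ rather than in an annulus, so $\cR^\bba_k(\cR^\bba_i f\cdot \cR^\bba_j g)$ may fail to vanish for every $k\le j+C$. H\"older bounds the pointwise product by $2^{-j(\alpha+\beta)}\|f\|_{\bB^\alpha_{\bbp;\bba}}\|g\|_{\bB^\beta_{\bbq;\bba}}$, so the estimate on $\|\cR^\bba_k(f\circ g)\|_\bbr$ reduces to $\sum_{j\ge k-C}2^{-j(\alpha+\beta)}$, which converges exactly when $\alpha+\beta>0$, giving \eqref{0426:10}. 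For \eqref{0426:07} one skips the outer frequency projection entirely and writes $\|f\circ g\|_\bbr\le \sum_j\|\cR^\bba_j f\|_\bbp\|\wt\cR^\bba_j g\|_\bbq$; the $\ell^1$-summability built into $\bB^{\alpha,1}_{\bbp;\bba}$ combined with $2^{j\alpha}\|\wt\cR^\bba_j g\|_\bbq\lesssim \|g\|_{\bB^{-\alpha}_{\bbq;\bba}}$ lets the exponents $\pm\alpha$ cancel term by term without requiring any positivity of the sum.

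The only genuinely technical point is bookkeeping in the anisotropic scaling $\xi\mapsto 2^{j\bba}\xi$: one must verify that the support statements above interact correctly with the anisotropic ball system $B^\bba_r$, which follows immediately from \eqref{Cx8} and \eqref{SX4}, and that the Bernstein estimate \eqref{Ber} supplies the correct anisotropic exponents. Since those ingredients are already in place through \cref{BI00}, no genuinely new idea beyond the classical Bony decomposition is needed, and the main obstacle is purely organizational rather than conceptual.
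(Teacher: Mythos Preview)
Your proposal is correct and follows the same approach as the paper. The paper declares \eqref{0426:09}, \eqref{0426:08}, and \eqref{0426:10} to be standard and cites \cite[Lemma 2.11]{HZZZ22}, then proves only \eqref{0426:07} by exactly the argument you give: bound $\|f\circ g\|_\bbr$ by $\sum_i\|\cR^\bba_i f\|_\bbp\|\wt\cR^\bba_i g\|_\bbq$, insert $2^{\alpha i}\cdot 2^{-\alpha i}$, and absorb the factors into the $\bB^{\alpha,1}_{\bbp;\bba}$ and $\bB^{-\alpha}_{\bbq;\bba}$ norms respectively.
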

\begin{proof}
    The estimates for \eqref{0426:09}, \eqref{0426:08} and \eqref{0426:10} are standard (see \cite[Lemma 2.11]{HZZZ22} for instance). We only show \eqref{0426:07}. In fact, by definition and H\"older's inequality, we have
    \begin{align*}
        \|f\circ g\|_{\bbr}\lesssim \sum_{i=0}^\infty\sum_{\ell=-2}^2\|\cR_{i}^\bba f\|_{\bbp}\|\cR_{i+\ell}^\bba g\|_{\bbq}\lesssim \sum_{i=0}^\infty2^{\alpha i}\|\cR_{i}^\bba f\|_{\bbp}\|g\|_{\bB^{-\alpha}_{\bbq;\bba}}\lesssim \|f\|_{\bB^{\alpha,1}_{\bbp;\bba}}\|g\|_{\bB^{-\alpha}_{\bbq;\bba}}.
    \end{align*}
    \end{proof}
    \begin{lemma}
        \label{lem:f-fn}
        Let $f$ be a measurable function defined on $\mR^{2d}$, define $f_n:=f\ast\varphi_n$ where $\varphi_n(x,v):=n^{4d\vartheta}\varphi(n^{3\vartheta}x, n^{\vartheta}v),$ $\vartheta\geq0$ and $\varphi$ is a probability density function on $\mR^{2d}$. Then for any $s\in\mR$, $\beta\in(0,1]$, $\bbp\in[1,\infty)^2$,
         we have
         \begin{align}\label{est:f-fn-app}
             \|f-f_n\|_{\bB^{s}_{\bbp;\bba}}\lesssim n^{-\vartheta \beta} \|f\|_{\bB^{s+\beta}_{\bbp;\bba}}.
         \end{align}
    \end{lemma}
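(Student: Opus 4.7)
The plan is to exploit that $\varphi$ is a probability density, so $\int_{\mR^{2d}}\varphi_n(w)\,dw = 1$, and write
\begin{align*}
f(z) - f_n(z) = \int_{\mR^{2d}}\big[f(z) - f(z-w)\big]\varphi_n(w)\,dw.
\end{align*}
Applying the anisotropic block operator $\cR^\bba_j$ (which commutes with translations and with the convolution) and invoking Minkowski's inequality in the $\mL^\bbp$-norm, I would obtain
\begin{align*}
\|\cR^\bba_j(f - f_n)\|_\bbp \leq \int_{\mR^{2d}} \|\cR^\bba_j f - \cR^\bba_j f(\cdot - w)\|_\bbp\, \varphi_n(w)\,dw.
\end{align*}

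The key technical input is an anisotropic Bernstein-type difference estimate: for every $\beta \in (0,1]$, $j \geq 0$ and $w \in \mR^{2d}$,
\begin{align*}
\|\cR^\bba_j f - \cR^\bba_j f(\cdot - w)\|_\bbp \lesssim \big(|w|_\bba^\beta\, 2^{j\beta}\big) \wedge 1\,\cdot\, \|\cR^\bba_j f\|_\bbp.
\end{align*}
This follows from the Fourier localization of $\cR^\bba_j$ (cf.\ \eqref{Cx8}) together with the scaling \eqref{SX4}: writing $\cR^\bba_j f - \cR^\bba_j f(\cdot-w)$ as convolution of $\cR^\bba_j f$ against $\check\phi^\bba_j - \check\phi^\bba_j(\cdot+w)$, rescaling to $j=1$, and invoking Young's inequality and the smoothness of $\check\phi^\bba_1$ reduces the estimate to the case $j=1$. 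Since $\beta \leq 1$ and $|w|_\bba^\beta 2^{j\beta}\wedge 1$ can replace the naive $|w|_\bba 2^{j\beta}$ bound by interpolation with the trivial bound $2\|\cR^\bba_j f\|_\bbp$, this gives the displayed estimate.

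Next, by the very definition $\varphi_n(x,v) = n^{4d\vartheta}\varphi(n^{3\vartheta}x, n^\vartheta v)$ and the substitution $(x',v') = (n^{3\vartheta}x, n^\vartheta v)$,
\begin{align*}
\int_{\mR^{2d}}|w|_\bba^\beta\, \varphi_n(w)\,dw
= n^{-\vartheta\beta}\int_{\mR^{2d}}|w|_\bba^\beta\, \varphi(w)\,dw,
\end{align*}
which is finite and depends only on $\varphi$ and $\beta$. Combining with the Bernstein-type estimate, I get
\begin{align*}
\|\cR^\bba_j(f - f_n)\|_\bbp \lesssim n^{-\vartheta\beta}\, 2^{j\beta}\|\cR^\bba_j f\|_\bbp.
\end{align*}
Finally, multiplying by $2^{js}$ and taking the supremum over $j \geq 0$ (recall $\bB^s_{\bbp;\bba} = \bB^{s,\infty}_{\bbp;\bba}$ by \eqref{def:Bes-pq}) yields the desired bound
\begin{align*}
\|f - f_n\|_{\bB^s_{\bbp;\bba}} \lesssim n^{-\vartheta\beta}\sup_{j\geq 0} 2^{j(s+\beta)}\|\cR^\bba_j f\|_\bbp = n^{-\vartheta\beta}\|f\|_{\bB^{s+\beta}_{\bbp;\bba}}.
\end{align*}
The main obstacle is verifying the anisotropic Bernstein-type difference estimate with the mixed scaling $\bba = (3,1)$; once the reduction to $j=1$ via \eqref{SX4} is in place, the remaining computation is standard, and the rest of the argument is routine bookkeeping of the scaling of $\varphi_n$.
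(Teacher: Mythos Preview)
Your proof is correct and follows essentially the same approach as the paper: write $f-f_n$ as an integral of translation differences against $\varphi_n$, apply $\cR^\bba_j$, bound the translation difference of the localized piece by $|w|_\bba^\beta 2^{j\beta}\|\cR^\bba_j f\|_\bbp$, and use the anisotropic scaling of $\varphi_n$ to extract the factor $n^{-\vartheta\beta}$. The only cosmetic difference is that the paper obtains the difference estimate by invoking the equivalence \eqref{CH1} in the form $\|\delta_w g\|_\bbp\lesssim |w|_\bba^\beta\|g\|_{\bB^\beta_{\bbp;\bba}}$ with $g=\cR^\bba_j f$ (and then $\|\cR^\bba_j f\|_{\bB^\beta_{\bbp;\bba}}\sim 2^{j\beta}\|\cR^\bba_j f\|_\bbp$), whereas you derive the same bound directly via the scaling \eqref{SX4}; both routes are standard and yield the same inequality.
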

\begin{proof}
    By \cref{bs}, it reads
    \begin{align*}
      \|f-f_n\|_{\bB^{s}_{\bbp;\bba}}=\sup_j2^{sj}   \|\cR_j^\bba(f-f_n)\|_{\bbp}.
    \end{align*}
    Notice that, since $\varphi_n$ is again a probability density function, for any $(x,v)\in\mR^{2d}$
    \begin{align*}
        \cR_j^\bba(f-f_n)(x,v)=\int_{\mR^{2d}}\varphi_n(y,w)\big( \cR_j^\bba f(x,v)-\cR_j^\bba f(x-y,v-w)\big)\dif y\dif w.
    \end{align*}
    It implies
    \begin{align*}
        \| \cR_j^\bba(f-f_n)\|_\bbp\lesssim& \int_{\mR^{2d}}|\varphi_n(y,w)|(|y|^\frac{\beta}{3}+|w|^\beta)\| \cR_j^\bba f\|_{_{\bB^{\beta}_{\bbp;\bba}}}\dif y\dif w
        \\\lesssim&\| \cR_j^\bba f\|_{\bB^{\beta}_{\bbp;\bba}} n^{-\vartheta \beta}
         =\sup_k2^{k\beta}   \|\cR_j^\bba f\|_{\bbp} n^{-\vartheta \beta}.
    \end{align*}
    In then end we get
     \begin{align*}
      \|f-f_n\|_{\bB^{s}_{\bbp;\bba}}=\sup_j2^{sj}   \|\cR_j^\bba(f-f_n)\|_{\bbp}\lesssim \sup_j2^{(s+\beta)j}   \|\cR_j^\bba f\|_{\bbp}n^{-\vartheta \beta}.
    \end{align*}
\end{proof}
\section*{Acknowledgment}
KL has been funded by the Engineering \& Physical Sciences Research Council (EPSRC) Grant EP/W523860/1.
We are thankful to Prof. Fengyu Wang (Tianjin University, China) who suggested this question to us.


\begin{thebibliography}{999}
\bibitem{B}F. Bouchut: Hypoelliptic regularity in kinetic equations.  {\it J. Math Pures Appl.} {\bf 81},
1135–1159, 2002.
 \bibitem{BDG} O. Butkovsky, K. Dareiotis and M, Gerencs\'er: Approximation of SDEs -- a stochastic sewing approach. {\it Probability Theory and Related Fields,} {\bf 181}, 975-1034, 2021.
\bibitem{R} P.-E. Chaudru de Raynal. Strong existence and uniqueness for degenerate SDE with H\"oder drift. {\it Annales de l\'Institut Henri Poincar\'e, Probabilit\'es et Statistiques} {\bf 53}
259–286, 2017.
\bibitem{RM} P. Chaudru De Raynal and S. Menozzi: Regularization effects of a noise propagating through a chain of differential equations: an almost sharp result. {\it Trans. Amer. Math. Soc.} {\bf 375} 1-45, 2022.
\bibitem{CMPZ} P. Chaudru de Raynal, S. Menozzi, A. Pesce and X. Zhang:
Heat kernel and gradient estimates for kinetic SDEs with low regularity coefficients,
{\it Bulletin des Sciences Mathématiques,} {\bf 183},  2023.
\bibitem{CZ}Z. Chen and X. Zhang: Propagation of regularity in $L^p$-spaces for Kolmogorov-type hypoelliptic operators. {\it J. Evol. Equ.} {\bf 19}, 1041–1069, 2019.
  \bibitem{DGL} K. Dareiotis,  M, Gerencs\'er and K. L\^e: Quantifying convergence theorem of Gy\"ongy and Krylov, {\it The Annals of Applied Probability}, {\bf 33}, 2291-2323, 2023.
 \bibitem{DF} F. Delarue and S. Menozzi: Density estimates for a random noise propagating
through a chain of differential equations. {\it Journal of Functional Analysis} {\bf 259}
1577 – 1630, 2010.
\bibitem{EK86}  N. Ethier and  G.
 Kurtz: Markov Processes: Characterization and Convergence. {\it Wiley series in probability and mathematical statistics}, 1986.

  \bibitem{FFPV} E. Fedrizzi, F. Flandoli, E. Priola, and J. Vovelle: Regularity of stochastic kinetic
equations. {\it Electronic Journal of Probability} {\bf 22}, 2017.

\bibitem{GS} S. Geiss and M. Scheutzow: Sharpness of Lenglart's domination inequality and a sharp monotone version.{\it  Electronic Communications in Probability} {\bf 26}, 1–8, 2021.
  \bibitem{Ha23} Z. Hao: McKean-Vlasov SDEs with singular drifts. {\it Bielefeld: Universität Bielefeld}, 2023.
 \bibitem{HZZZ22} Z. Hao Z., X. Zhang, R. Zhu R. and X. Zhu: Singular kinetic equations. To appear in {\it Ann. Probab.} Available in arXiv:2108.05042  (2021).
  \bibitem{HRZ} Z. Hao Z., M. R\"ockner and X. Zhang: Second order fractional mean-field SDEs with singular kernels and measure initial data.  Available in  arXiv:2302.04392  (2023).
\bibitem{HWZ20}
 Z. Hao, M. Wu, and X. Zhang, Schauder's estimate for nonlocal kinetic equations and applications. J. Math. Pures Appl. (9), 140, 139-184, 2020.
  \bibitem{HJK}M. Hutzenthaler, A. Jentzen and  P. Kloeden: Strong convergence of an explicit numerical method for SDEs with nonglobally Lipschitz continuous coefficients.
{\it Ann. Appl. Probab.} {\bf 22} 1611-1641, 2012.
       \bibitem{Holland}  T. Holland:  A note on the weak rate of convergence for the Euler-Maruyama scheme with H\"older drift. {\it Stoch. Proc. Appl.} {\bf 174},  2024.
       \bibitem{H}L. H\"ormander: Hypoelliptic second order differential equations, {\it Acta Math.} {\bf 119}, 147–171, 1967.
       \bibitem{GL}
L. Galeati and C. Ling:
\newblock { Stability estimates for singular SDEs and applications}.
\newblock {\it  Electron. J. Probab.}, 28:1--31, 2023.
\bibitem{GLL} M. Gerensc\'er, G. Lampl and C. Ling: The Milstein Scheme for singular SDEs with H\"older continuous drift. {\it Arxiv preprint} https://arxiv.org/pdf/2305.16004. 2023.
     \bibitem{IS}  C. Imbert and L. Silvestre: The Schauder estimate for kinetic integral equations. {\it Analysis \& PDE} {\bf 14}, 171-204, 2021.

  \bibitem{JM21} B. Jourdain and S. Menozzi: Convergence Rate of the Euler-Maruyama Scheme Applied
to Diffusion Processes with $L^Q-L^\rho$ Drift Coeffcient and Additive Noise, {\it Ann. Appl. Probab}, {\bf  34 }, 1663–1697, 2024.
 \bibitem{KR}   N. V. Krylov  and M. R\"ockner: Strong solutions of stochastic equations with singular time dependent drift.
		{\it Probab. Theory Relat. Fields} {\bf{131}} (2005) 154-196.
  \bibitem{LM} V. Lemaire and  S. Menozzi: On some Non Asymptotic Bounds for the Euler Scheme. {\it Electronic Journal of Probability}  {\bf 15}, 1645-1681,  2010.
  \bibitem{LS18} G. Leobacher and M. Sz\"olgyenyi: Convergence of the Euler-Maruyama method for multidimensional SDEs with discontinuous drift and degenerate diffusion coeffcient. {\it Numer. Math.},
{\bf 138} 219–239, 2018.
\bibitem{Le} K. L\^e: A stochastic sewing lemma and applications. {\it Electronic Journal of Probability} {\bf 25},
 1-55, 2021
 \bibitem{Le2022} K. L\^e: Quantitative John--Nirenberg inequality for stochastic processes of bounded mean oscillation. {\it Arxiv preprint} https://arxiv.org/pdf/2210.15736, 2022
 \bibitem{LL} K. L\^e and C. Ling: Taming singular SDEs: A numerical method. {\it Arxiv preprint}  https://arxiv.org/pdf/2110.01343.pdf, 2021.
 \bibitem{MSH}J. Mattingly, J. Stuart, and D. Higham: Ergodicity for SDEs and approximations:
locally Lipschitz vector fields and degenerate noise. {\it Stoch. Proc. Appl.} {\bf 101}, 185–232, 2002.
\bibitem{MGY20} T. M\"uller-Gronbach and L. Yaroslavtseva: On the performance of the Euler-Maruyama
scheme for SDEs with discontinuous drift coeffcient. {\it Ann. Inst. Henri Poincaré Probab. Stat.}, {\bf 56} 1162–
1178, 2020.
 \bibitem{NSz}A. Neuenkirch and M. Sz\"olgyenyi:  The Euler-Maruyama Scheme for SDEs with Irregular Drift: Convergence Rates via Reductionn to a Quadrature Problem. {\it IMA Journal of Numerical Analysis }, {\bf 41} 1164–1196, 2020.
 \bibitem{RZ24} C. Ren and X. Zhang: Heat kernel estimates for kinetic SDEs with drifts being unbounded and in Kato’s class.  {\it Arxiv preprint} arXiv:2401.13873, 2024
 \bibitem{RE}L. Rothschild and E. Stein: Hypoelliptic differential operators and nilpotent groups, {\it Acta Math.} {\bf 137}, 247–320, 1976.

\bibitem{S} C. Soize: The Fokker – Planck Equation for Stochastic Dynamical Systems
and Its Explicit Steady State Solutions. {\it Series on Advances in Mathematics for
Applied Sciences} {\bf 17}, World Scientific, Singapore, 1994.
\bibitem{Tri06} H. Triebel: Theory of function spaces, III. Basel, Birkh\"{a}user (2006).
\bibitem{WZ} F. Wang and X. Zhang: Degenerate SDE with Hölder-Dini drift and non-Lipschitz noise
coefficient. {\it SIAM J. Math. Anal.} {\bf 48}, 2189–2226. 2016.
\bibitem{T} D. Talay: Stochastic Hamiltonian Systems: Exponential Convergence to the
Invariant Measure, and Discretization by the Implicit Euler Scheme. {\it Markov Processes Relat. Fields} {\bf 8}, 1–36, 2002
\bibitem{V} C. Villani: A review of mathematical topics in collisional kinetic theory. {\it Handbook of mathematical fluid dynamics}, 2002.
   \bibitem{Ver} A. Yu. Veretennikov: On the strong solutions of stochastic differential equations. {\it Theory Probab. Appl. 24} 354-366, 1979.
354-366
\bibitem{Zhang10} X. Zhang: Stochastic Volterra equations in Banach spaces and stochastic partial differential
equations. {\it J. Fun. Anal.}  {\bf 258}  1361-1425, 2010.
\bibitem{Zhang2011} X. Zhang: Stochastic homeomorphism flows of SDEs with singular drifts and Sobolev diffusion coefficients. {\it Electron. J. Probab.} {\bf 16}, 1096–1116, 2011.
\bibitem{Zhang2018} X. Zhang: Stochastic Hamiltonian flows with singular coefficients. {\it Science China Mathematics}, {\bf{61}} 8,
1353-1384, 2018.
\bibitem{ZZ21} X. Zhang and X. Zhang: Cauchy problem of stochastic kinetic equations. To appear in {\it Ann. Appl. Probab.} Available in arXiv:2103.02267  (2021).

\bibitem{Zv} A. K.  Zvonkin: A transformation of the phase space of a diffusion process that removes the drift. {\it Math. Sbornik}, {\bf 135},  129-149. 1974.
\end{thebibliography}
\end{document}